\newtheorem{theoremext}{Theorem}
\newtheorem{base}{Base}[section]
\numberwithin{equation}{section}
\theoremstyle{plain}
\newtheorem{theorem}[base]{Theorem}
\newtheorem{lemma}[base]{Lemma}
\newtheorem{corollary}[base]{Corollary}
\theoremstyle{definition}
\newtheorem{definition}[base]{Definition}
\newtheorem{remark}[base]{Remark}
\newcommand{\R}{\mathbb{R}}
\newcommand{\N}{\mathbb{N}}
\renewcommand{\d}{\,\mathrm{d}}
\newcommand{\laplace}{\Delta}
\newcommand{\grad}{\nabla}
\renewcommand{\div}{\nabla \cdot}
\renewcommand{\L}[1]{{L^{#1}(\Omega)}}
\newcommand{\defs}{\coloneqq}
\newcommand{\sfed}{\eqqcolon}
\newcommand{\feds}{\eqqcolon}
\newcommand{\stext}[1]{\;\;\text{ #1 }\;\;}
\newcommand{\eps}{\varepsilon}
\newcommand{\loc}{\mathrm{loc}}
\newcommand\numberthis{\addtocounter{equation}{1}\tag{\theequation}}
\g@addto@macro\bfseries{\boldmath}
\begin{document}
\title{Two new functional inequalities and their application to the eventual smoothness of solutions to a chemotaxis-Navier--Stokes system with rotational flux}
\author{
	Frederic Heihoff\footnote{fheihoff@math.uni-paderborn.de}\\
	{\small Institut f\"ur Mathematik, Universit\"at Paderborn,}\\
	{\small 33098 Paderborn, Germany}
}
\date{}
\maketitle

\begin{abstract}
	\noindent We prove two new functional inequalities of the forms
	\[
		\int_G \varphi (\psi - \overline{\psi}) \leq \frac{1}{a}\int_G \psi \ln \left(\frac{\;\psi\;}{ \overline{\psi}}\right) + \frac{a}{4\beta_0} \left\{ \int_G \psi \right\}\int_G|\nabla \varphi|^2
	\]
	and
	\[
		\int_G \psi \ln \left(\frac{\;\psi\;}{ \overline{\psi}}\right) \leq \frac{1}{\beta_0}\left\{ \int_G \psi \right\}\int_G |\nabla \ln(\psi)|^2
	\]
	for any finitely connected, bounded $C^2$-domain $G \subseteq \mathbb{R}^2$, a constant $\beta_0 > 0$, any $a > 0$ and sufficiently regular functions $\varphi$, $\psi$. 
	\\[0.5em]
	We then illustrate their usefulness by proving long time stabilization and eventual smoothness properties for certain generalized solutions to the chemotaxis-Navier--Stokes system
	\[
	\left\{\;\;
	\begin{aligned}
	n_t + u \cdot \nabla n &\;\;=\;\; \Delta n - \nabla \cdot (nS(x,n,c) \nabla c),\\
	c_t + u\cdot \nabla c &\;\;=\;\; \Delta c - n f(c), \\ 
	u_t + (u\cdot \nabla) u  &\;\;=\;\; \Delta u + \nabla P + n \nabla \phi, \;\;\;\;\;\; \nabla \cdot u = 0,
	\end{aligned}
	\right.	
	\]
	on a smooth, bounded, convex domain $\Omega \subseteq \mathbb{R}^2$ with no-flux boundary conditions for $n$ and $c$ as well as a Dirichlet boundary condition for $u$. We further allow for a general chemotactic sensitivity $S$ attaining values in $\mathbb{R}^{2\times 2}$ as opposed to a scalar one. 
	\\[0.5em]
	\textbf{Keywords:} functional inequalities; variational methods; Trudinger--Moser inequality; Navier--Stokes; chemotaxis; generalized solutions; eventual smoothness\\
	\textbf{MSC 2020:} 35K55 (primary); 35A23, 35A15, 35J20, 35D30, 35Q92, 35Q35, 92C17 (secondary) 
\end{abstract}
\pagebreak
\section{Introduction}
\paragraph{Two new functional inequalities.} As it explores the space at the limits of the Sobolev inequalities, the Trudinger--Moser inequality (cf.\  \cite{moser1971sharp}, \cite{trudinger1967imbeddings}) and its corollaries have proven crucial in discovering the structural subtleties of many partial differential equations. Especially in the case of two-dimensional domains, where the gap between $W^{1,2}$ embedding into $L^p$, $p\in[1,\infty)$, but not embedding into $L^\infty$ seems particularly vast, many interesting properties at the parameter boundaries of the sharp Trudinger--Moser inequality have been discovered. One such example is the existence of blowing-up solutions to the mean field equation (cf.\ \cite{EspositoExistenceBlowingupSolutions2005} or \cite{AdimurthiGlobalCompactnessProperties2000} for a similar discussion in a slightly different setting), where the mentioned blowup occurs as a critical system parameter approaches a value connected to the Trudinger--Moser inequality. Similarly for the two-dimensional Keller--Segel system (cf.\ \cite{keller1970initiation}), it has been proven that blowup behavior of solutions depends critically on the initial mass of the first solution component, where the value of said critical mass is again closely connected to the optimal parameter in the sharp Trudinger--Moser inequality (cf.\ \cite{HorstmannBlowupChemotaxisModel2001}, \cite{nagai1997application}). Apart from these already striking results, the Trudinger--Moser inequality has also been used to cope with exponential nonlinearities in the wave equations (cf.\ \cite{AlvesExistenceUniformDecay2009a}) as well as the heat equation (cf.\ \cite{ArrietaParabolicProblemsNonlinear1999}) among other examples.
\\[0.5em]
One recently derived consequence of the Trudinger--Moser inequality, which is e.g.\ used in the existence theory of chemotaxis-Navier--Stokes systems, are inequalities of the forms
\[
	\int_G \varphi (\psi - \overline{\psi}) \leq \frac{1}{a}\int_G \psi \ln \left(\frac{\;\psi\;}{ \overline{\psi}}\right) + \frac{a}{4\beta_0} \left\{ \int_G \psi \right\}\int_G|\grad \varphi|^2 + \frac{C}{a} \int_G \psi \;\;\;\; \text{ for all } a > 0
\]
and 
\[
	\int_G \psi \ln \left(\frac{\;\psi\;}{ \overline{\psi}}\right) \leq \frac{1}{\beta_0}\left\{ \int_G \psi \right\}\int_G |\grad \ln(\psi)|^2 + C\int_G\psi
\]
with $G$ being a finitely connected two-dimensional domain with a smooth boundary, $\varphi, \psi$ being sufficiently regular functions such that all integrals are defined, $\beta_0$ as well as $C$ being fixed constants and $\overline{\psi} \defs \frac{1}{|G|}\int_G \psi$ (cf.\ \cite{MyExistence}, \cite{WinklerSmallMass}). Notably for trivial examples of $\psi$ (e.g.\ constant functions), it is easy to see that the above inequalities hold without the additional mass term. Thus keeping in mind the often striking results at the limits of the original Trudinger--Moser inequality, it seems potentially fruitful to investigate the degree to which we can minimize the constant $C$ or if it is possible to even remove the (potentially vestigial) mass term altogether. As the following result shows, the latter is in fact achievable (at the cost of potentially smaller value of $\beta_0$) and, as we will see later in this paper, in fact conducive to improving our understanding of the aforementioned chemotaxis-Navier--Stokes systems.
\begin{theorem}
	\label{theorem:func_ineq}
	For any bounded, finitely connected domain $G \subseteq \R^2$ with $C^2$-boundary, there exists a constant $\beta_0 > 0$ such that
	\begin{equation}
	\int_G \varphi (\psi - \overline{\psi}) \leq \frac{1}{a}\int_G \psi \ln \left(\frac{\;\psi\;}{ \overline{\psi}}\right) + \frac{a}{4\beta_0} \left\{ \int_G \psi \right\}\int_G|\grad \varphi|^2
	\label{eq:func_ineq1}
	\end{equation}
	for all $\varphi \in W^{1,2}(G)$, positive $\psi \in L^p(G)$ with $p > 1$ and $a > 0$ and
	\begin{equation}
	\int_G \psi \ln \left(\frac{\;\psi\;}{ \overline{\psi}}\right) \leq \frac{1}{\beta_0}\left\{ \int_G \psi \right\}\int_G |\grad \ln(\psi)|^2
	\label{eq:func_ineq2}
	\end{equation}
	for all positive $\psi \in L^p(G)$ with $p > 1$ and $\ln(\psi) \in W^{1,2}(G)$. Here, $\overline{\psi} \defs \frac{1}{|G|} \int_G \psi$.
\end{theorem}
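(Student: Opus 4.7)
The plan is to reduce both inequalities to a single Moser--Trudinger--Onofri type estimate
\[
\int_G e^{u} \;\leq\; |G|\exp\!\left(\tfrac{1}{4\beta_0}\int_G |\nabla u|^2\right) \quad \text{for every } u \in W^{1,2}(G) \text{ with } \int_G u = 0, \qquad(\star)
\]
for a suitable constant $\beta_0>0$ depending only on $G$, and then to derive (\ref{eq:func_ineq1}) by a duality argument and (\ref{eq:func_ineq2}) by a substitution.

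To pass from $(\star)$ to (\ref{eq:func_ineq1}), I first reduce to the case $\bar\varphi = 0$, which is possible because the left-hand side is invariant under $\varphi\mapsto\varphi-\bar\varphi$, and to prescribed total mass $M \defs \int_G \psi$, possible because every term scales linearly in $\psi$. For such fixed $\varphi$ and $M$, the map $\psi\mapsto -\int_G\varphi(\psi-\bar\psi) + \tfrac{1}{a}\int_G\psi\ln(\psi/\bar\psi)$ is strictly convex on $\{\psi>0:\int_G\psi = M\}$, and a Lagrange multiplier computation locates its unique minimizer at the Gibbs density $\psi^\ast = M e^{a\varphi}/\int_G e^{a\varphi}$, with minimum value $-\tfrac{M}{a}\ln(|G|^{-1}\int_G e^{a\varphi})$. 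Thus (\ref{eq:func_ineq1}) is equivalent to the assertion $\ln(|G|^{-1}\int_G e^{a\varphi}) \leq \tfrac{a^2}{4\beta_0}\int_G|\nabla\varphi|^2$ for all mean-zero $\varphi$, which is $(\star)$ after the substitution $u = a\varphi$. For (\ref{eq:func_ineq2}), I insert $\varphi = \ln\psi$ into (\ref{eq:func_ineq1}): Jensen's inequality applied to the concave function $\ln$ gives $\bar\psi\int_G\ln\psi \leq \int_G\psi\ln\bar\psi$, hence $\int_G\ln\psi(\psi-\bar\psi) \geq \int_G\psi\ln(\psi/\bar\psi)$, and (\ref{eq:func_ineq1}) with $a>1$ rearranges to $(1-\tfrac{1}{a})\int_G\psi\ln(\psi/\bar\psi) \leq \tfrac{a}{4\beta_0}\{\int_G\psi\}\int_G|\nabla\ln\psi|^2$. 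The choice $a = 2$ minimizes the prefactor $a^2/(a-1)$ at the value $4$ and yields exactly (\ref{eq:func_ineq2}).

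The main obstacle is thus $(\star)$. The classical Moser--Trudinger inequality in its mean-zero form, $\int_G e^{4\pi u^2/\|\nabla u\|_{L^2}^2} \leq C_{\mathrm{MT}}|G|$, combined with the pointwise Young-type bound $|u| \leq \tfrac{\|\nabla u\|_{L^2}^2}{16\pi} + \tfrac{4\pi u^2}{\|\nabla u\|_{L^2}^2}$, yields only the \emph{weaker} form $\int_G e^u \leq C_{\mathrm{MT}}|G|\exp(\tfrac{1}{16\pi}\int_G|\nabla u|^2)$, with an essentially unavoidable multiplicative constant $C_{\mathrm{MT}} \geq 1$. To absorb $C_{\mathrm{MT}}$ into the exponent at the cost of a non-sharp $\beta_0$, I would perform a three-regime analysis of the ratio $R(u) \defs \ln(|G|^{-1}\int_G e^u)\big/\int_G|\nabla u|^2$ on mean-zero $u \not\equiv 0$: in the regime $\int_G|\nabla u|^2 \to \infty$, the exponential Moser--Trudinger bound gives $R(u) \leq \ln C_{\mathrm{MT}}/\int_G|\nabla u|^2 + 1/(16\pi) \to 1/(16\pi)$; in the regime $\int_G|\nabla u|^2 \to 0$, the Taylor expansion $\int_G e^u = |G| + \tfrac{1}{2}\int_G u^2 + o(\int_G u^2)$ combined with Poincaré's inequality for mean-zero functions yields $R(u) \leq C_P/(2|G|) + o(1)$; and on any intermediate annulus $K_2 \leq \int_G|\nabla u|^2 \leq K_1$, Moser--Trudinger itself furnishes a uniform upper bound on $\int_G e^u$ and hence on $R$. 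Consequently $\sup R < \infty$, and setting $\beta_0 \defs 1/(4\sup R)$ yields $(\star)$.
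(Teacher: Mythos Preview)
Your reduction of both inequalities to the Onofri-type estimate $(\star)$ and the subsequent derivation of (\ref{eq:func_ineq1}) via entropy--energy duality and of (\ref{eq:func_ineq2}) via the substitution $\varphi=\ln\psi$, $a=2$, coincide exactly with the paper's route. The genuine difference lies in how $(\star)$ itself is obtained. The paper treats $(\star)$ as the statement that the functional $J_\beta(\varphi)=\tfrac{1}{4\beta}\int_G|\nabla\varphi|^2+\overline{\varphi}-\ln\bigl(|G|^{-1}\int_G e^\varphi\bigr)$ is nonnegative for small $\beta$; it constructs a minimizer $\varphi_\beta$ by the direct method, shows that $\varphi_\beta$ weakly solves the Neumann problem $-\tfrac{1}{2\beta}\Delta\varphi_\beta=-|G|^{-1}+e^{\varphi_\beta}/\int_G e^{\varphi_\beta}$, and then uses elliptic $W^{2,2}$-regularity together with the Sobolev embedding to produce a $\beta$-independent $L^\infty$ bound, after which testing the equation with $\varphi_\beta$ forces $\varphi_\beta\equiv 0$ once $\beta$ is small enough. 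Your three-regime analysis of the ratio $R(u)$ bypasses the Euler--Lagrange equation and all elliptic theory entirely, trading them for softer tools (Moser--Trudinger in the large-gradient regime, a second-order expansion plus Poincar\'e in the small-gradient regime). This is more elementary and arguably cleaner, though it yields no explicit value for $\beta_0$ and provides no structural information about near-extremals. One point that deserves a sentence more than you give it: the claimed expansion $\int_G e^u=|G|+\tfrac{1}{2}\int_G u^2+o(\int_G u^2)$ must hold \emph{uniformly} over mean-zero $u$ with $\|\nabla u\|_{L^2}\to 0$, and this uniformity is not automatic from a pointwise Taylor remainder---you need, for instance, $\int_G u^2 e^{|u|}\le \|u\|_{L^4}^2\|e^{|u|}\|_{L^2}$ together with Gagliardo--Nirenberg and another application of Moser--Trudinger to control the exponential factor. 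With that detail supplied, your argument is complete.
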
 \noindent
\begin{remark}
	For functions $\psi$ of higher regularity (e.g.\ $C^1(\overline{G})$), the inequality in (\ref{eq:func_ineq2}) can be rewritten as
	\[
		\int_G \psi \ln \left(\frac{\;\psi\;}{ \overline{\psi}}\right) \leq \frac{2}{\beta_0}\left\{ \int_G \psi \right\}\int_G \frac{|\grad \psi|^2}{\psi^2},
	\]
	which is how we will use it in \Cref{section:application}.
\end{remark}
\paragraph{Key ideas. } Both (\ref{eq:func_ineq1}) and (\ref{eq:func_ineq2}) are ultimately a consequence of a corollary to the Trudinger--Moser inequality (cf.\ \cite{moser1971sharp}, \cite{trudinger1967imbeddings}), namely the inequality
\[
	\int_G e^{\beta\varphi} \leq C_G \exp\left( \frac{1}{4\beta} \int_G |\grad \varphi|^2 + \frac{1}{|G|} \int_G \varphi \right) \;\;\;\;\;\; \text{ for all } \varphi \in W^{1,2}(G)  \numberthis \label{eq:intro_mt}
\]
with appropriate $\beta > 0$ and $C_G \geq |G|$ (This lower bound for $C_G$ directly follows from setting $\varphi \defs 0$).
Although the above inequality is fairly easy to derive if optimality of the constants is not necessarily an objective, our aim here will be minimizing the constant $C_G$ as this is central to the derivation of our new inequalities. In fact, the ideas that make this possible are arguably the linchpin to this whole paper and seem nonetheless not widely explored in this context.
Instead prior efforts seem to mostly focus on maximizing $\beta$ in various settings (cf.\ \cite{MoserTrudingerOpt1}, \cite{MoserTrudingerOpt2}, \cite{moser1971sharp},  for instance), while the key to our result is in fact sacrificing the size of $\beta$ in favor of smaller $C_G$. To our knowledge, minimizing $C_G$ has been thus far only considered on the spheres $\mathbb{S}^n$ and in related settings (cf.\ \cite{MR3377875}, \cite{MR677001}, \cite{Xiong2018}, for instance).
\\[0.5em]
To achieve such a minimization of $C_G$ in planar domains then, we begin by employing techniques from the calculus of variations to first find a minimizer $\varphi_\beta$ of the functional 
\[
	J_\beta(\varphi) \defs \frac{1}{4\beta}\int_{G} |\grad \varphi|^2 + \frac{1}{|G|}\int_{G} \varphi - \ln\left(\frac{1}{|G|}\int_{G}e^\varphi\right) \;\;\;\; \text{ for all } \varphi \in W^{1,2}(G),
\]
which arises in a natural way from (\ref{eq:intro_mt}) after some rearrangement, for each $\beta \in (0,\pi]$.
\\[0.5em]
Having found such minimizers, we then show that they solve a Neumann problem corresponding to the equation
\[
	-\frac{1}{2\beta} \laplace \varphi_\beta  = -\frac{1}{|G|} + \frac{e^{\varphi_\beta}}{\int_G e^{\varphi_\beta}}
\]
in a weak sense. Using the regularity features of this Neumann problem combined with the regularity properties of the minimizers inherent to their construction, we can argue that all minimizers are bounded in $L^\infty(G)$ independent of $\beta$. We further note that, when $\beta$ becomes small, the Laplacian on the left-hand side of the above equation becomes arbitrarily strong, which has the following consequence: If we restrict ourselves to solutions, which are normalized to $\int_\Omega \varphi_\beta = 0$ and are bounded in $L^\infty(G)$ by a fixed constant independent of $\beta$, then as $\beta \searrow 0$ the only solution that fulfills these constraints is $\varphi_\beta  \equiv 0$.
Combining these insights, we can then conclude that our minimizers must be equal to zero everywhere as well if $\beta$ is sufficiently small.
But this directly gives us $J_\beta \geq J_\beta(\varphi_\beta) = J_\beta(0) = 0$ for sufficiently small $\beta$, which implies (\ref{eq:intro_mt}) with $C_G = |G|$.
\\[0.5em]
Knowing that (\ref{eq:intro_mt}) is in fact true with $C_G = |G|$ then allows us to derive our functional inequalities in a similar fashion to \cite{MyExistence} and \cite{WinklerSmallMass}. 
  
\paragraph{A chemotaxis-Navier--Stokes system.}
The above result was not created in a vacuum but rather during the study of the long time behavior of the system 
\[
\left\{\;\;
\begin{aligned}
n_t + u \cdot \grad n &\;\;=\;\; \laplace n - \div (nS(x,n,c) \cdot \grad c), \\
c_t + u\cdot \grad c &\;\;=\;\; \laplace c - n f(c), \\ 
u_t + (u\cdot \grad) u  &\;\;=\;\; \laplace u + \grad P + n \grad \phi, \;\;\;\;\;\; \nabla \cdot u = 0,
\end{aligned}
\right.
\numberthis \label{problem}
\]
of partial differential equations arising from biology in bounded two-dimensional domains with a smooth boundary. It models chemotaxis, the directed movement of cells along a chemical gradient toward an attractant, under the influence of a surrounding fluid. Here, $n$ represents the cell population, $c$ represents an attractant concentration, $u$ and $P$ represent the fluid velocity field and associated pressure, respectively.
\\[0.5em]
Systems of this type, though without fluid interaction, were first introduced in the seminal work \cite{keller1970initiation} by Keller and Segel in 1970 and have since been developed in several directions. One such direction stems from the observation by Dombrowski et al.\ (cf.\ \cite{PhysRevLett.93.098103}) that a population of \emph{Bacillus subtilis} generate speeds of fluid movement after aggregation that seemed to be insufficiently explained by considering each cell in isolation. This challenges the standing assumption that fluid-cell interaction can be disregarded because each cell has only negligible influence on the fluid. As such, Tuval et al.\  \cite{Tuval2277} introduced the system (\ref{problem}) containing a full Navier--Stokes equation to model the fluid interaction, which we present here somewhat normalized. The key interactions are the convective forces of the fluid acting on the cells and attractant modeled by the terms $u\cdot \grad n$ and $u \cdot \grad c$, respectively, and the buoyant forces of the cells acting on the fluid modeled by the term $n\grad\phi$.
\\[0.5em]
If we assume the chemotactic sensitivity $S$ to be scalar, systems of this type are fairly well understood, which is often due to convenient energy-type structures. In fact, there have been various results discussing well-posedness in the whole space case under varying assumption on the parameter functions $S$, $f$, $\phi$ and initial data (cf.\ \cite{MR3208807}, \cite{MR2754058}) as well as results about the global existence of unique classical solutions in bounded two-dimensional domains (cf.\ \cite{WinklerExistence}). In bounded three-dimensional domains, there are generally only less ambitious existence results available, likely due to the problematic Navier--Stokes equation (cf.\ \cite{WinklerExistence}, \cite{MR3542616}, \cite{MR3605965}). For a broader overlook about many types of chemotaxis problems and results concerning them, we refer the reader to the survey \cite{MR3351175}.
\paragraph{Eventual smoothness of solutions.}
Before we formulate the second main result of this paper, let us first establish the full context, in which we want to analyze the system (\ref{problem}):
\\[0.5em]
We let $\Omega \subseteq \R^2$ be a bounded, convex domain with a smooth boundary. We then add the boundary conditions
\begin{equation}
\grad n \cdot \nu = n(S(x,n,c)\grad c)  \cdot \nu, \;\; \grad c \cdot \nu = 0, \;\; u = 0 \;\; \text{ for all } x \in \partial\Omega, t > 0 \label{boundary_conditions}
\end{equation}
and initial conditions
\begin{equation}
n(x, 0) = n_0(x),\;\;c(x,0) = c_0(x),\;\;u(x,0) = u_0(x) \;\; \text{ for all } x\in\Omega
\label{initial_data}
\end{equation}
for initial values with the properties
\begin{equation}
\left\{\;
\begin{aligned}
n_0 &\in C^{\iota}(\overline{\Omega})  && \text{ for some } \iota > 0\text{ and with } n_0 > 0 \text{ in } \overline{\Omega}, \\
c_0 &\in W^{1,\infty}(\Omega) &&\text{ with } c_0 > 0 \text{ in } \Omega,  \\
u_0 &\in D(A_2^\vartheta)&& \text{ for some } \vartheta \in (\tfrac{1}{2}, 1)
\end{aligned}
\right.
\label{initial_data_props}
\end{equation}
to (\ref{problem}). Here, $A_2$ denotes the Stokes operator on the Hilbert space $L^2_\sigma(\Omega) \defs \{ \varphi \in (L^2(\Omega))^2 \;|\; \div \varphi = 0 \}$ of all solenoidal functions in $(L^2(\Omega))^2$. For more details concerning this space and operator, see \Cref{section:application_final_estimates}.
\\[0.5em]
For the functions $f, S$ and $\phi$ that parameterize (\ref{problem}), we will throughout this paper assume that
\begin{equation}
f \in C^1([0,\infty)) \stext{with} f(0) = 0 \stext{and} f(x) > 0 \;\; \text{ for all } x\in(0,\infty),
\label{f_regularity}
\end{equation}
that, for $S = (S_{ij})_{i,j \in\{1,2\}}$,
\begin{equation}
S_{ij} \in C^2(\overline{\Omega}\times[0,\infty)\times[0,\infty)) \;\;\text{ for } i,j \in \{1,2\}
\label{S_regularity},
\end{equation}
that
\begin{equation}
|S(x,n,c)| \leq S_0(c) \;\;\;\; \text{ for all } (x,n,c) \in \overline{\Omega}\times[0,\infty)^2 \;\; \text{ and some nondecreasing } S_0: [0,\infty) \rightarrow [0, \infty)
\label{S_0_bound}
\end{equation}
and that
\begin{equation}
\phi \in W^{2,\infty}(\Omega) \label{phi_regularity}.
\end{equation}
Given this setting, let us now first cite the following existence result for global generalized mass-preserving solutions from \cite{MyExistence}:
\begin{theoremext}
	\label{theorem:ext_existence}
	Let $\Omega \subseteq \R^2$ be a bounded, convex domain with a smooth boundary. If we then assume that $f$, $S$, $\phi$ satisfy (\ref{f_regularity})--(\ref{phi_regularity}) and the initial data have the properties outlined in (\ref{initial_data_props}), then the system (\ref{problem}) with initial data and boundary conditions (\ref{boundary_conditions}) and (\ref{initial_data}) has a global mass-preserving generalized solution $(n,c,u)$ in the sense of \Cref{definition:weak_solution} below.	
\end{theoremext}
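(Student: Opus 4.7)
Since the statement is cited verbatim from \cite{MyExistence}, my plan is to follow the standard approximation-and-compactness route underlying the construction of generalized solutions for chemotaxis--Navier--Stokes systems with rotational flux. The first step is to regularize the cross-diffusion by replacing the chemotactic sensitivity with $S_\eps(x,n,c) \defs S(x,n,c)\chi_\eps(n)$, where $(\chi_\eps)_{\eps\in(0,1)} \subseteq C_c^\infty([0,\infty))$ is a family of nonincreasing cutoffs with $\chi_\eps \uparrow 1$ as $\eps \searrow 0$. This makes the cross-diffusion globally bounded, so that a Banach fixed-point argument combined with standard parabolic and Stokes semigroup theory produces a global classical solution $(n_\eps, c_\eps, u_\eps)$ of the resulting regularized system. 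Comparison and maximum principle arguments then yield $n_\eps > 0$, $0 < c_\eps \leq \|c_0\|_{L^\infty(\Omega)}$, and the mass conservation identity $\int_\Omega n_\eps(\cdot,t) = \int_\Omega n_0$ at the approximate level.

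The crux of the proof is to extract $\eps$-uniform a priori estimates. Combining the basic energy estimate for $u_\eps$, an entropy estimate obtained by testing the $n_\eps$-equation against $\ln n_\eps$, and a dissipative estimate for $|\nabla \sqrt{c_\eps}|^2$ produces a coupled entropy--energy inequality whose cross term must be absorbed using a functional inequality of the form
\[
	\int_\Omega \varphi(\psi - \overline{\psi}) \leq \frac{1}{a}\int_\Omega \psi \ln\!\left(\frac{\psi}{\overline{\psi}}\right) + \frac{a}{4\beta_0}\left\{\int_\Omega \psi\right\}\int_\Omega |\nabla \varphi|^2 + \frac{C}{a}\int_\Omega \psi,
\]
i.e.\ the mass-term-perturbed precursor of (\ref{eq:func_ineq1}) established in \cite{WinklerSmallMass} and \cite{MyExistence}, applied with $\psi \defs n_\eps$ and $\varphi$ a suitable function of $c_\eps$. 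After appropriately choosing $a > 0$ and invoking a Gronwall argument, this delivers $\eps$-uniform bounds on $\int_\Omega n_\eps \ln n_\eps$, $\int_\Omega |\nabla c_\eps|^2$, and $\int_\Omega |u_\eps|^2$, which bootstrap via maximal Stokes regularity and standard parabolic theory for the $c$-equation to the local $L^p$-integrability needed for the subsequent limit passage.

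Armed with these estimates, an Aubin--Lions-type compactness argument extracts a subsequence along which $n_\eps \to n$ in $L^1_\loc$ and almost everywhere, $c_\eps \to c$ strongly in $L^2_\loc$ with suitably strong convergence of $\nabla c_\eps$, and $u_\eps \to u$ strongly in $L^2_\loc$ with weak convergence of derivatives. The limit is then verified to satisfy the equations for $c$ and $u$ in the standard weak sense, whereas the $n$-equation is recovered only in the weaker generalized form of \Cref{definition:weak_solution}, typically by reformulating it integrally (e.g.\ as a supersolution inequality for $\ln(n+1)$ obtained by testing the approximate equation against a logarithmic test function and exploiting the weak lower semicontinuity of the entropy). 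Mass preservation of $n$ survives the limit since the generalized formulation is designed to be mass-conservative.

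The principal obstacle in this strategy is the limit passage in the chemotactic flux $n_\eps S_\eps \nabla c_\eps$: since $n_\eps$ converges only weakly in $L^1_\loc$, identifying the distributional limit of this product in the classical weak framework would require compactness that is not directly available. This is precisely the reason for the detour through the generalized solution concept, in which the $n$-equation is recast in a logarithmic (supersolution-like) form whose nonlinearity is lower semicontinuous under the available convergences, and in which the additional integrability of $n_\eps$ needed to handle the cross term is delivered by the functional inequality displayed above.
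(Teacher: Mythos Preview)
The paper does not prove this theorem at all: it is stated as an external result and explicitly cited from \cite{MyExistence}, so there is no ``paper's own proof'' to compare against beyond the brief description in Section~\ref{section:application} of the approximate scheme. Your outline is a reasonable high-level summary of the strategy in \cite{MyExistence}, and you correctly flag that the statement is a citation.

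That said, your regularization step has a substantive omission relative to what the paper records about the construction in \cite{MyExistence}. You set $S_\eps(x,n,c)=S(x,n,c)\chi_\eps(n)$ with a cutoff only in the $n$-variable, but the paper (see the discussion after \eqref{approx_system}) emphasizes that the approximate sensitivities $S_\eps$ are also made to vanish near $\partial\Omega$. This spatial cutoff is not cosmetic: the original problem carries the nonlinear no-flux condition $\nabla n\cdot\nu = n(S(x,n,c)\nabla c)\cdot\nu$ on $\partial\Omega$, and only once $S_\eps$ vanishes near the boundary does the approximate system reduce to the homogeneous Neumann condition $\nabla n_\eps\cdot\nu=0$ used throughout the a~priori analysis (e.g.\ in the testing procedures leading to \eqref{eq:weak_grad_n_smallness}). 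Without this, your fixed-point/local existence step and several integrations by parts would have to contend with nontrivial boundary terms. Apart from this point, the entropy/energy bookkeeping, the role of the Trudinger--Moser-type functional inequality with the additive mass term, and the passage to the generalized $\ln(n+1)$ supersolution formulation are all in line with what \cite{MyExistence} does.
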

\noindent It is for these generalized solutions that we prove the following eventual smoothness and stabilization result as we will make extensive use of the fact that they are the limit of a certain sequence of approximate solutions:
\begin{theorem}
	\label{theorem:eventual_smoothness}
	Let $\Omega \subseteq \R^2$ be a bounded, convex domain with a smooth boundary. Assume further that $f$, $S$, $\phi$ satisfy (\ref{f_regularity})--(\ref{phi_regularity}) and the initial data $(n_0, c_0, u_0)$ have the properties outlined in (\ref{initial_data_props}). Then for the generalized mass-preserving solution $(n,c,u)$ of (\ref{problem}) with (\ref{boundary_conditions}) and (\ref{initial_data}) constructed in \Cref{theorem:ext_existence}, there exists a time $t_0 > 0$ such that
	\[
		(n,c,u) \in C^{2,1}(\overline{\Omega} \times [t_0,\infty)) \times C^{2,1}(\overline{\Omega} \times [t_0,\infty)) \times C^{2,1}(\overline{\Omega} \times [t_0,\infty); \R^2).
	\]
	Further, there exists $P \in C^{1,0}(\overline{\Omega}\times[t_0,\infty))$ such that $(n,c,u,P)$ is a classical solution of (\ref{problem}) on $\Omega \times (t_0,\infty)$ with boundary conditions (\ref{boundary_conditions}). 
	\\[0.5em]
	Additionally,
	\begin{equation}
		n(\cdot, t) \rightarrow \frac{1}{|\Omega|}\int_\Omega n_0, \;\;\;\; c(\cdot, t) \rightarrow 0, \;\;\;\; u(\cdot, t) \rightarrow 0 \label{eq:eventual_convergence} 
	\end{equation}
	in $C^2(\overline{\Omega})$ or $C^2(\overline{\Omega};\R^2)$, respectively, as $t \rightarrow \infty$.
\end{theorem}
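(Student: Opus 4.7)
Since the generalized solution $(n,c,u)$ arises by a compactness argument from a family of approximate solutions $(n_\eps, c_\eps, u_\eps)$ constructed in the proof of \Cref{theorem:ext_existence}, my plan is to derive bounds at the approximation level and then transfer them to the limit. The first step is to identify a quasi-Lyapunov functional roughly of the form
\[
\mathcal{F}_\eps(t) \defs \int_\Omega n_\eps \ln\!\left(\frac{n_\eps}{\overline{n}_0}\right) + A\int_\Omega |\grad c_\eps|^2 + B\int_\Omega |u_\eps|^2,
\]
where $A,B>0$ are constants yet to be chosen (the middle term may have to be modified, e.g.\ to $\int_\Omega |\grad c_\eps|^2/c_\eps^2$, to produce the natural dissipation). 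Differentiating along the approximate flow should give an inequality of the form
\[
\tfrac{d}{dt}\mathcal{F}_\eps(t) + \mathcal{D}_\eps(t) \leq \mathcal{R}_\eps(t),
\]
where $\mathcal{D}_\eps$ collects nonnegative dissipation terms (the Fisher information $\int|\grad n_\eps|^2/n_\eps$, second-derivative norms of $c_\eps$, and $\|\grad u_\eps\|_{L^2}^2$), and $\mathcal{R}_\eps$ comprises the cross-coupling terms generated by the rotational chemotactic flux $n_\eps S \grad c_\eps$ and the buoyancy $n_\eps \grad\phi$.

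\textbf{Using the new inequalities.} After integration by parts, the remainder $\mathcal{R}_\eps$ will contain expressions essentially of the shape $\int_\Omega n_\eps(\varphi-\overline{\varphi})$ for a suitable test function $\varphi \in W^{1,2}(\Omega)$ constructed from $c_\eps$, $u_\eps$ and $\phi$. To this I apply (\ref{eq:func_ineq1}), which estimates such terms by a combination of the entropy $\int_\Omega n_\eps\ln(n_\eps/\overline{n_\eps})$ and a small multiple of $\int_\Omega|\grad\varphi|^2$, with constants depending only on $\int_\Omega n_\eps = \int_\Omega n_0$ (conserved by mass preservation). Crucially, the absence of the additive mass term in (\ref{eq:func_ineq1}) is what allows the $|\grad\varphi|^2$-contribution to be absorbed into $\mathcal{D}_\eps$ without residue. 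The remaining entropy contribution is then handled by (\ref{eq:func_ineq2}), used in the form displayed in the remark following \Cref{theorem:func_ineq}, which dominates $\int_\Omega n_\eps\ln(n_\eps/\overline{n}_0)$ by a fraction of $\int_\Omega|\grad n_\eps|^2/n_\eps^2$ already present in $\mathcal{D}_\eps$. Tuning $A$, $B$ and the scaling parameters $a$ in (\ref{eq:func_ineq1}), I aim to obtain
\[
\tfrac{d}{dt}\mathcal{F}_\eps(t) + \delta\,\mathcal{F}_\eps(t) \leq 0 \qquad \text{for all } t \geq T_1,\; \eps \in (0,\eps_0),
\]
for some $T_1, \delta > 0$ independent of $\eps$, yielding the uniform exponential decay $\mathcal{F}_\eps(t) \leq Ce^{-\delta t}$.

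\textbf{Bootstrap to $C^{2,1}$ and convergence.} From the exponential decay of $\mathcal{F}_\eps$ I extract smallness of $\|n_\eps(\cdot,t) - \overline{n}_0\|_{L^1(\Omega)}$, $\|\grad c_\eps(\cdot,t)\|_{L^2(\Omega)}$ and $\|u_\eps(\cdot,t)\|_{L^2(\Omega)}$ as $t \to \infty$, uniformly in $\eps$. Standard two-dimensional parabolic, elliptic and Stokes regularity theory, combined with heat-semigroup smoothing estimates and the $L^\infty$ boundedness once the solution enters the small-data regime, allow me to bootstrap these bounds first to $L^\infty$, then to Hölder, and finally to full $C^{2,1}(\overline{\Omega}\times[t_0,\infty))$ estimates that are uniform in $\eps$, provided $t_0 > T_1$ is chosen large enough. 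Refining the compactness arguments of \Cref{theorem:ext_existence} on $\overline{\Omega}\times[t_0,\infty)$, the limit $(n,c,u)$ inherits this $C^{2,1}$ regularity; the pressure $P$ is recovered from the Stokes equation in the customary way, and (\ref{eq:eventual_convergence}) follows from passing the exponential decay estimates to the limit and interpolating against the new uniform higher-order bounds.

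\textbf{Main obstacle.} The crux of the argument is Step 2: the tensor-valued sensitivity $S$ spoils the standard scalar-chemotaxis energy identity, so the quasi-Lyapunov inequality inevitably comes with a nontrivial remainder involving couplings of $n_\eps$ and $\grad c_\eps$. The novelty of \Cref{theorem:func_ineq} is indispensable here: with the earlier versions of the inequalities (carrying an additive $C\int_G\psi$), one would only obtain decay up to a mass-dependent residual that in general cannot be absorbed, so the ODI would fail to close. A secondary technical challenge is lifting the uniform $C^{2,1}$ estimates from the approximations to a genuine classical solution on $[t_0,\infty)$ rather than merely obtaining convergence in weaker topologies, which should however be routine once the uniform bounds are in place.
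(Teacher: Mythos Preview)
Your overall architecture---work at the approximate level, use \Cref{theorem:func_ineq} on the cross-couplings, then bootstrap---matches the paper. But Step~2 does not close the way you sketch. The decisive cross-term is $\int_\Omega n_\eps|\grad c_\eps|^2$, produced both when testing the first equation with $\ln n_\eps$ and the second with $-\Delta c_\eps$. If you apply (\ref{eq:func_ineq1}) with $\varphi=|\grad c_\eps|^2$, the right-hand side contains $\int_\Omega\bigl|\grad(|\grad c_\eps|^2)\bigr|^2$, a third-order quantity in $c_\eps$ that is absent from your dissipation $\mathcal{D}_\eps$ and cannot be manufactured from it. The paper instead handles this term by Sobolev ($W^{1,1}\hookrightarrow L^2$) and H\"older, producing $\int_\Omega|\grad c_\eps|^4$, which via Gagliardo--Nirenberg is bounded by $C\,\mathcal{F}_\eps(t)\int_\Omega|\Delta c_\eps|^2$. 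This is a \emph{nonlinear} interaction: the resulting $\int_\Omega|\Delta c_\eps|^2$ can be absorbed only once $\mathcal{F}_\eps$ is already below a fixed threshold. So the paper never obtains $\mathcal{F}_\eps'+\delta\mathcal{F}_\eps\le 0$; it obtains a \emph{conditional} ODI (\Cref{lemma:small_things_stay_small}) valid only while $\mathcal{F}_\eps$ is small, with an additional forcing $K\int_\Omega|\grad c_\eps|^2$ on the right whose time integral must be made small beforehand. A related slip: you propose absorbing the entropy via (\ref{eq:func_ineq2}) into $\int_\Omega|\grad n_\eps|^2/n_\eps^2$, but the dissipation generated by $\mathcal{F}_\eps$ is $\int_\Omega|\grad n_\eps|^2/n_\eps$---one power of $n_\eps$ short.

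This forces a genuinely two-phase argument that your proposal skips. In Phase~1 the paper uses (\ref{eq:func_ineq1})--(\ref{eq:func_ineq2}) not pointwise in $t$ but to turn the basic bound $\int_0^\infty\!\int_\Omega|\grad n_\eps|^2/n_\eps^2<\infty$ into global space--time integrability of $\int_\Omega n_\eps\ln(n_\eps/\overline{n_0})$, $\int_\Omega|u_\eps|^2$ and $\int_\Omega|\grad u_\eps|^2$ (\Cref{lemma:nlnn_integrabilty}, \Cref{lemma:u_props}), and---via a separate application of (\ref{eq:func_ineq1}) with $\varphi=f(c_\eps)$---to show $\|c_\eps(\cdot,t)\|_{L^p}\to 0$ uniformly in $\eps$, hence $\int_{t_0}^\infty\!\int_\Omega|\grad c_\eps|^2\to 0$ (\Cref{lemma:c_stabilization}, \Cref{lemma:grad_c_long_time_small}). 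Only with these ingredients can one locate, for any $\delta>0$, an $\eps$-independent time after which $\mathcal{F}_\eps$ drops below $\delta$ and then invoke the conditional ODI to keep it there (\Cref{lemma:things_are_small}). Your exponential-decay shortcut bypasses precisely the places where the new inequalities do their real work; once \Cref{lemma:things_are_small} is in hand, the bootstrap and the transfer to the limit proceed essentially as you describe.
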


\paragraph{Complications.}
As already expanded upon in the related existence theory in \cite{MyExistence}, the two key features of (\ref{problem}) that complicate any analysis of the system are that we allow for general matrix valued sensitivities $S$ and use a full Navier--Stokes equation as the fluid model. Both are mainly problematic because they restrict our access to good, immediately available a priori information we can use as a baseline for later arguments.
\\[0.5em]
For scalar sensitivities $S$, there exist many results about similar systems to (\ref{problem}), with or without fluid interaction, concerning global existence (cf.\ \cite{MR2754058}, \cite{WinklerExistence}, \cite{MR3369260}) and long time behavior (cf.\ \cite{MR3149063}, \cite{MR3605965}) due to some very convenient energy inequalities. In the matrix-valued case, these energy inequalities are no longer available. This makes analysis of especially the first equation in (\ref{problem}) highly difficult. Therefore to our knowledge, prior work concerning non-scalar sensitivities has either hinged on some strong assumptions about $S$ or the initial data (cf.\ \cite{MR3562314}, \cite{MR3531759}, \cite{MR3801284}, \cite{MR3401606}, \cite{MR3542964}), on adding sufficiently strong nonlinear diffusion to the first equation (cf.\ \cite{MR3426095}) or only constructing generalized solutions (cf.\ \cite{MyExistence}, \cite{WinklerStokesCase}). Even in the fluid-free version of (\ref{problem}) without imposing any strong assumptions on $S$, global smooth solutions in the two-dimensional case seem to have thus far only been constructed under significant smallness conditions for $c_0$ (cf.\ \cite{MR3302296}) and, if we allow for general initial data and space dimension, only global generalized solutions (similar to those in \Cref{definition:weak_solution}) seem to be available (cf.\ \cite{WinklerLargeDataGeneralized}).
\\[0.5em]
Matrix valued sensitivities were introduced because they are of significant interest from a modeling standpoint. In models with scalar $S$, it been shown that solutions homogenize over time, which does not agree with the structure formation observed in experiments (cf.\ \cite{MR3149063}). As newly formed structures tend to originate at the boundaries (cf.\ \cite{PhysRevLett.93.098103}), modern modeling approaches introduce rotational flux components near said boundaries, leading to a sensitivity function $S$ that looks somewhat like
\[
	S = a\left(\,\begin{matrix}1 & 0 \\ 0 & 1\end{matrix}\, \right) + b \left(\,\begin{matrix}0 & -1 \\ 1& 0\end{matrix} \,\right) \;\;\;\; \text{ for }a > 0, \; b \in \R
\]
with significant non-diagonal entries (cf.\ \cite{MR3294964}, \cite{MR2505083}).
\\[0.5em]
The second complication inhibiting our access to a priori information is of course the famously hard to handle Navier--Stokes equation modeling the fluid. If we remove the nonlinear convection term and simplify the fluid model to a Stokes equation, a similar result about the eventual smoothness of generalized solutions not unlike those discussed here can be found in \cite{WinklerEventualSmoothness}. Sadly the methods seen there in large do not translate to the full Navier--Stokes case.
\\[0.5em]
As we will see in \Cref{section:application} or, more specifically, the proofs of \Cref{lemma:nlnn_integrabilty}, \Cref{lemma:u_props} and \Cref{lemma:c_stabilization}, these complications as far as we know can only be overcome (at least if we do not want to pose strong restrictions on our parameters) due to our functional inequalities in \Cref{theorem:func_ineq}, which in our opinion certainly underlines their significant usefulness.
\paragraph{Key ideas.}
As we only consider the generalized solutions constructed in  \Cref{theorem:ext_existence}, we will naturally require some of their specific structure for our proof of their eventual smoothness, namely that they are the limit of certain approximate solutions $(n_\eps, c_\eps, u_\eps)_{\eps \in (0,1)}$. The key idea then is to show that the approximate solutions are uniformly bounded in some sufficiently strong parabolic Hölder spaces from some time $t_0 > 0$ onward and use compact embedding properties of such spaces to show that the limit functions $(n,c,u)$ posses a similarly high level of regularity. We then only need to further derive that $n$ eventually fulfills a weak solution property of the same kind as the ones for $c$ and $u$ in \Cref{definition:weak_solution} as this allows us to use standard parabolic regularity theory to argue that the generalized solutions from \Cref{theorem:ext_existence} become in fact classical. 
\\[0.5em]
To do this, we need to extract significantly stronger a priori estimates for the approximate solutions (albeit maybe only after some time has passed) than for the existence theory in \cite{MyExistence} and this is naturally where our new functional inequalities come in. Our arguments will be based on the initially fairly weak regularity information
\[
	\int_0^\infty \int_\Omega \frac{|\grad n_\eps|^2}{n_\eps^2} \leq C \;\;\;\; \text{ for all } \eps \in (0,1)
\]
derived in \Cref{lemma:basic_props}, which leads to the following two important global integrability properties (cf.\ \Cref{lemma:nlnn_integrabilty}) due to the new functional inequalities (\ref{eq:func_ineq1}) and (\ref{eq:func_ineq2}):
\[
	\int_0^\infty\int_\Omega n_\eps\ln\left( \frac{n_\eps}{\overline{n_0}} \right) \leq C \stext{ and } \int_0^\infty\int_\Omega n_\eps \grad \phi \cdot u_\eps \leq C \;\;\;\; \text{ for all } \eps \in (0,1).
\]
The former then allows us to show in \Cref{lemma:c_stabilization} and \Cref{lemma:grad_c_long_time_small} that $\|c_\eps(\cdot, t)\|_\L{\infty}$ and
\[
\int_t^\infty \int_\Omega |\grad c_\eps|^2
\] become uniformly small as $t \rightarrow \infty$ while the latter allows us in \Cref{lemma:u_props} to derive that
\[
	\int_0^\infty \int_\Omega |u_\eps|^2  \leq C \;\; \text{ and } \;\; \int_0^\infty \int_\Omega |\grad u_\eps|^2  \leq C \;\;\;\; \text{ for all } \eps \in (0,1).
\] 
Both proofs again heavily rely on \Cref{theorem:func_ineq}.
\\[0.5em] 
Albeit in a weak sense, the above statements already suggest that the functional
\[
	\mathcal{F}_\eps(t) \defs \int_{\Omega} n_\eps \ln\left( \frac{n_\eps}{\overline{n_0}}\right) + \frac{1}{2}\int_{\Omega} |\grad c_\eps|^2 + \frac{1}{2C}\int_{\Omega} |u_\eps|^2
\] 
introduced in \Cref{section:application_final_estimates} might become uniformly small for large times $t$. That this is in fact the case is shown in \Cref{lemma:small_things_stay_small} and \Cref{lemma:things_are_small} by first arguing that the functional is small at some time $t_\eps$ for each $\eps \in (0,1)$ prior to a time $t$ independent of $\eps$ based on the integrability properties above and then deriving a differential inequality for $\mathcal{F}_\eps$ via testing methods to show that, if $\mathcal{F}_\eps$ ever gets small enough, it in fact stays small. From this argument, we additionally gain certain useful integrability properties for some higher order terms.  
\\[0.5em]
This now not only already gives us fairly strong stabilization properties, but also takes us over the critical point in terms of regularity such that the standard bootstrap techniques seen from \Cref{section:bootstrap} onward will take us all the way to our desired result.
\section{Two new functional inequalities based on the Trudinger--Moser inequality}
For the purposes of this section, $G \subseteq \R^2$ is always a finitely connected, bounded domain (cf.\ \cite{chang1988conformal}) with a $C^2$-boundary.
\subsection{The Trudinger--Moser inequality}
We will start the derivation of our new functional inequalities by reminding ourselves of an already well-known inequality first pioneered by Trudinger in \cite{trudinger1967imbeddings} and then later refined by Moser in \cite[Theorem 1]{moser1971sharp}, which will serve as the starting point for all further considerations. As it is somewhat more convenient for our purposes though, we use the more recent formulation of the same inequality by Chang in \cite[Proposition 2.3]{chang1988conformal}, which can be extended from $C^1(\overline{G})$ to $W^{1,2}(G)$ by a straightforward density argument: 
\begin{theorem}
	Let $G \subseteq \R^2$ be a finitely connected, bounded domain (cf.\ \cite{chang1988conformal}) with a $C^2$-boundary. Then there exists a constant $C_G \geq |G|$ such that, for all $\varphi \in W^{1,2}(G)$ 
	with 
	\begin{equation*}
	\int_{G} |\grad \varphi|^2 \leq 1 \text{ and } \int_{G} \varphi = 0 
	\end{equation*}
	and $0 < \beta \leq 2\pi$, we have
	\begin{equation*}
		\int_{G} e^{\beta \varphi^2} \leq C_G.
	\end{equation*}
	\label{theorem:moser-trudinger-raw}
\end{theorem}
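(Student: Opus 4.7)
The plan is to take Chang's form of the Trudinger--Moser inequality on $C^1(\overline{G})$ (cf.\ \cite{chang1988conformal}, Proposition 2.3) as a black box and extend it to $W^{1,2}(G)$ by approximation, as suggested in the paragraph preceding the statement. The bound $C_G \geq |G|$ requires no argument: the admissible test function $\varphi \equiv 0$ satisfies both normalization constraints, and for it the inequality reads $|G| \leq C_G$, so any constant that works in the inequality must majorize $|G|$.

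For the density extension, I would start from an arbitrary $\varphi \in W^{1,2}(G)$ with $\int_{G} |\grad \varphi|^2 \leq 1$ and $\int_{G} \varphi = 0$. Since $G$ has a $C^2$-boundary, $C^1(\overline{G})$ is dense in $W^{1,2}(G)$, so I can pick $(\psi_n)_{n \in \N} \subseteq C^1(\overline{G})$ with $\psi_n \to \varphi$ in $W^{1,2}(G)$. The $\psi_n$ will generally violate both normalization conditions, so I would adjust them in two steps. First, replace $\psi_n$ by $\tilde\psi_n \defs \psi_n - \frac{1}{|G|}\int_{G} \psi_n$; this enforces mean zero without affecting the gradient, and the $W^{1,2}$-convergence is preserved since the correcting constant tends to zero. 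Second, rescale by $s_n \defs \min(1, \|\grad \tilde\psi_n\|_{L^2(G)}^{-1})$ (with $s_n \defs 1$ if $\grad \tilde\psi_n \equiv 0$) and set $\varphi_n \defs s_n \tilde\psi_n \in C^1(\overline{G})$. Then $\int_{G} \varphi_n = 0$ and $\int_{G} |\grad \varphi_n|^2 \leq 1$, and since $\|\grad \tilde\psi_n\|_{L^2(G)} \to \|\grad \varphi\|_{L^2(G)} \leq 1$, we have $s_n \to 1$ and thus $\varphi_n \to \varphi$ in $W^{1,2}(G)$.

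Having arranged the constraints, I would apply Chang's inequality to each $\varphi_n \in C^1(\overline{G})$ to obtain $\int_{G} e^{\beta \varphi_n^2} \leq C_G$ uniformly in $n$. Passing to a subsequence along which $\varphi_n \to \varphi$ almost everywhere in $G$ (available because $W^{1,2}$-convergence implies $L^2$-convergence), Fatou's lemma then yields
\[
\int_{G} e^{\beta \varphi^2} \;\leq\; \liminf_{n \to \infty} \int_{G} e^{\beta \varphi_n^2} \;\leq\; C_G,
\]
which is the desired inequality. The only step requiring any care is the combined mean-correction and rescaling, since one wants both normalization conditions satisfied simultaneously and $s_n \to 1$; carrying out the correction in the order above (mean first, then rescale) achieves this cleanly because subtracting a constant does not change $\grad \tilde\psi_n$. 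Beyond this bookkeeping, no ingredient beyond Chang's theorem and standard functional-analytic tools enters.
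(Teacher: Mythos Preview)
Your proposal is correct and matches the paper's approach exactly: the paper does not give a proof of this theorem but merely cites Chang's Proposition~2.3 and remarks that it ``can be extended from $C^1(\overline{G})$ to $W^{1,2}(G)$ by a straightforward density argument,'' which is precisely the argument you have written out in detail. Your observation that $C_G \geq |G|$ follows from testing with $\varphi \equiv 0$ is also the reasoning the paper invokes elsewhere (after \eqref{eq:intro_mt}).
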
\noindent
As the above restrictions on $\varphi$ can be somewhat inconvenient, we will now prove a standard corollary of the Trudinger--Moser inequality eliminating said restrictions at the cost of some corresponding terms on the right and a slightly different term on the left of the inequality:
\begin{corollary}
	\label{corollary:moser-trudinger-raw}
	For each $0 < \beta \leq 2\pi$ and $\varphi \in W^{1,2}(G)$, we have 
	\begin{equation}
	\int_{G} e^\varphi \leq C_G \exp\left( \frac{1}{4\beta}\int_{G} |\grad \varphi|^2  + \frac{1}{|G|}\int_{G} \varphi \right)
	\label{eq:cor:moser-trudinger-raw}
	\end{equation}
	with $C_G$ from \Cref{theorem:moser-trudinger-raw}.
\end{corollary}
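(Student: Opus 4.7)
The plan is to reduce the inequality to the mean-zero, unit gradient case covered by \Cref{theorem:moser-trudinger-raw} via two simple normalizations and one application of Young's inequality.

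First I would split off the mean: write $\overline{\varphi} \defs \frac{1}{|G|}\int_G \varphi$ and $\tilde{\varphi} \defs \varphi - \overline{\varphi}$, so that $\int_G \tilde{\varphi} = 0$ and $\int_G|\grad \tilde{\varphi}|^2 = \int_G|\grad \varphi|^2$. Since $e^\varphi = e^{\overline{\varphi}} e^{\tilde{\varphi}}$, the desired bound on $\int_G e^\varphi$ reduces to showing
\[
	\int_G e^{\tilde{\varphi}} \leq C_G \exp\left(\frac{1}{4\beta}\int_G|\grad \tilde{\varphi}|^2\right).
\]
The case $\int_G|\grad \tilde{\varphi}|^2 = 0$ is trivial since then $\tilde{\varphi} \equiv 0$, giving $\int_G e^{\tilde{\varphi}} = |G| \leq C_G$ by the lower bound on $C_G$ from \Cref{theorem:moser-trudinger-raw}. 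Otherwise, set
\[
	\psi \defs \frac{\tilde{\varphi}}{\sqrt{\int_G|\grad \tilde{\varphi}|^2}} \in W^{1,2}(G),
\]
which satisfies $\int_G \psi = 0$ and $\int_G|\grad \psi|^2 = 1$, exactly the hypotheses of \Cref{theorem:moser-trudinger-raw}.

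Next I would decouple $\tilde{\varphi}$ into a pure gradient term and a $\psi^2$ term using Young's inequality in the form $ab \leq \frac{a^2}{4\beta} + \beta b^2$ with $a = \sqrt{\int_G|\grad \tilde{\varphi}|^2}$ and $b = \psi$. This yields pointwise
\[
	\tilde{\varphi} = \sqrt{\int_G|\grad \tilde{\varphi}|^2} \cdot \psi \leq \frac{1}{4\beta}\int_G|\grad \tilde{\varphi}|^2 + \beta \psi^2.
\]
Exponentiating and integrating over $G$ then gives
\[
	\int_G e^{\tilde{\varphi}} \leq \exp\left(\frac{1}{4\beta}\int_G|\grad \tilde{\varphi}|^2\right)\int_G e^{\beta \psi^2},
\]
and since $0 < \beta \leq 2\pi$, \Cref{theorem:moser-trudinger-raw} applied to $\psi$ bounds the last integral by $C_G$. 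Combining this with $\int_G e^\varphi = e^{\overline{\varphi}}\int_G e^{\tilde{\varphi}}$ and recalling $\overline{\varphi} = \frac{1}{|G|}\int_G \varphi$ and $\int_G|\grad \tilde{\varphi}|^2 = \int_G|\grad \varphi|^2$ yields (\ref{eq:cor:moser-trudinger-raw}).

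There is no real obstacle here; the only mild subtlety is the choice of coefficients in Young's inequality, which has to be tuned so that the $\psi^2$ coefficient exactly matches the $\beta$ allowed by \Cref{theorem:moser-trudinger-raw} while the gradient term produces the correct $\frac{1}{4\beta}$ prefactor demanded by (\ref{eq:cor:moser-trudinger-raw}). That choice is precisely $ab \leq \frac{a^2}{4\beta} + \beta b^2$, and everything else is routine.
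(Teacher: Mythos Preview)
Your proof is correct and follows essentially the same approach as the paper: both subtract the mean, handle the zero-gradient case trivially via $|G| \leq C_G$, apply Young's inequality with coefficients $\frac{1}{4\beta}$ and $\beta$ to the normalized function $(\varphi-\overline{\varphi})/\|\grad\varphi\|_{L^2}$, and then invoke \Cref{theorem:moser-trudinger-raw}. The only cosmetic difference is that the paper inserts an absolute value $\varphi-\overline{\varphi}\leq|\varphi-\overline{\varphi}|$ before applying Young's inequality, which is harmless but unnecessary since the AM-GM form $ab\leq\frac{a^2}{4\beta}+\beta b^2$ already holds for all real $a,b$.
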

\begin{proof}
	As (\ref{eq:cor:moser-trudinger-raw}) is trivially true for constant functions $\varphi$ with $C_G = |G|$, we can assume that $\|\grad \varphi\|_\L{2} > 0$ for the remainder of this proof without loss of generality. Then by using Young's inequality to see that
	\[
		\varphi - \overline{\varphi} \leq |\varphi - \overline{\varphi}| \leq \beta\left(\frac{\varphi - \overline{\varphi}}{\|\grad \varphi\|_{L^2(G)}}\right)^2 + \frac{1}{4\beta} \|\grad \varphi\|^2_{L^2(G)}
	\]
	with $\overline{\varphi} \defs \frac{1}{|G|} \int_G \varphi$,  we directly gain from \Cref{theorem:moser-trudinger-raw} that
	\[
		\int_G e^{\varphi-\overline{\varphi}} \leq C_G\exp\left( \frac{1}{4\beta} \int_G |\grad \varphi|^2 \right)
	\]
	or further that
	\[
		\int_G e^{\varphi} \leq C_G\exp\left( \frac{1}{4\beta} \int_G |\grad \varphi|^2 + \frac{1}{|G|}\int_G \varphi \right) 
	\]
	for all $0 < \beta \leq 2\pi$ and $\varphi \in W^{1,2}(G)$.
\end{proof}
\noindent As integrals of the form $\int_G e^\varphi$ will naturally play a significant role in the following arguments, let us briefly note that the above corollary ensures that said integrals are always finite and positive if $\varphi \in W^{1,2}(G)$, which makes them reasonably straightforward to handle. 
\subsection{A variational approach to minimizing $C_G$}
Understanding the relationship of the constants $C_G$ and $\beta$ in \Cref{corollary:moser-trudinger-raw} will be the linchpin to our proof of \Cref{theorem:func_ineq}. While there have been considerable efforts to achieve the above inequality for optimal, meaning large, values of $\beta$ in many different contexts as laid out in the introduction, we will be more interested in how small we can make $C_G \geq |G|$ at the cost of only allowing for smaller values of $\beta$, which is not as widely studied. 
\\[0.5em]
Therefore, what we are now essentially looking at is a minimization problem, which we will handle using variational methods. Concerning which functional to minimize, we let ourselves be guided by a similar approach in \cite[Theorem 18.2.1]{MR3052352} to minimizing the constant $C_G$ on the sphere $\mathbb{S}^2$ (cf.\ \cite{MR3377875} for an overview about proof techniques on $\mathbb{S}^2$). Thus for each $\beta \in (0,2\pi]$, we will analyze the following functional:
\begin{equation}\label{eq:J_functional}
	J_\beta(\varphi) \defs \frac{1}{4\beta}\int_{G} |\grad \varphi|^2 + \frac{1}{|G|}\int_{G} \varphi - \ln\left(\frac{1}{|G|}\int_{G}e^\varphi\right) \;\;\;\; \text{ for all } \varphi \in W^{1,2}(G).
\end{equation}
As $J_\beta \geq 0$ immediately implies that (\ref{eq:cor:moser-trudinger-raw}) holds with $C_G = |G|$, which is the smallest possible value for $C_G$ in said inequality, it will be our aim for the remainder of this section to show that minimizers $\varphi_\beta$ for $J_\beta$ exist and that, for sufficiently small $\beta$, they have the property $J_\beta(\varphi_\beta) = 0$.
\\[0.5em]
To do this, let us now first consider a basic lower boundedness and coerciveness property of $J_\beta$ directly following from \Cref{corollary:moser-trudinger-raw}:
\begin{lemma}
	There exists a constant $C \geq 0$ such that
	\[
	J_\beta(\varphi) \geq \frac{1}{8\beta} \int_{G} |\grad \varphi|^2 - C \geq -C
	\]
	for all $\varphi \in W^{1,2}(G)$ and $\beta \in (0,\pi]$.
	\label{lemma:J_coercive}
\end{lemma}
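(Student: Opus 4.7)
The plan is to leverage \Cref{corollary:moser-trudinger-raw} directly, exploiting the fact that the range $\beta \in (0,\pi]$ is deliberately only half of the allowed range $(0,2\pi]$ of the corollary. This slack lets us apply the Trudinger--Moser consequence at the doubled parameter $2\beta$ and still absorb only half of the gradient term, leaving the other half on the right-hand side.

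Concretely, I would start from the inequality
\[
\int_G e^\varphi \leq C_G \exp\left(\frac{1}{4(2\beta)}\int_G |\grad \varphi|^2 + \frac{1}{|G|}\int_G \varphi\right),
\]
which holds for any $\varphi \in W^{1,2}(G)$ by \Cref{corollary:moser-trudinger-raw} since $2\beta \in (0,2\pi]$ whenever $\beta \in (0,\pi]$. Taking logarithms, dividing $\int_G e^\varphi$ by $|G|$, and rearranging yields
\[
-\ln\left(\frac{1}{|G|}\int_G e^\varphi\right) \geq -\ln\!\left(\frac{C_G}{|G|}\right) - \frac{1}{8\beta}\int_G |\grad \varphi|^2 - \frac{1}{|G|}\int_G \varphi.
\]
Substituting this lower bound into the definition of $J_\beta(\varphi)$ in (\ref{eq:J_functional}), the two mean terms $\frac{1}{|G|}\int_G \varphi$ cancel exactly, and the gradient contribution becomes
\[
\frac{1}{4\beta}\int_G |\grad \varphi|^2 - \frac{1}{8\beta}\int_G |\grad \varphi|^2 = \frac{1}{8\beta}\int_G |\grad \varphi|^2.
\]
Setting $C \defs \ln(C_G/|G|)$, which is nonnegative because $C_G \geq |G|$ by \Cref{theorem:moser-trudinger-raw}, gives the first inequality of the claim; the second inequality follows trivially from $\int_G |\grad \varphi|^2 \geq 0$.

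There is no real obstacle here: the only mildly delicate point is the choice of the auxiliary parameter, and the restriction $\beta \leq \pi$ in the statement is precisely what makes $2\beta$ admissible in \Cref{corollary:moser-trudinger-raw}. The constant $C$ obtained is uniform in $\beta \in (0,\pi]$ since it depends only on $C_G$ and $|G|$, as required.
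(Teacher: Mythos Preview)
Your proof is correct and follows essentially the same approach as the paper: both apply \Cref{corollary:moser-trudinger-raw} at the doubled parameter $2\beta$ (the paper calls it $\gamma$), cancel the mean term, and are left with the $\tfrac{1}{8\beta}$ gradient remainder and the constant $C=\ln(C_G/|G|)$.
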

\begin{proof}
	Let $\beta \in (0,\pi]$ and $\gamma \defs 2\beta \in (\beta,2\pi]$. Then we know from \Cref{corollary:moser-trudinger-raw} that
	\[
	- \ln\left(\frac{1}{|G|}\int_{G} e^\varphi \right) \geq 
	-\frac{1}{4\gamma}\int_{G} |\grad \varphi|^2  - \frac{1}{|G|}\int_{G} \varphi - \ln\left(\frac{C_G}{|G|}\right).
	\]
	for all $\varphi \in W^{1,2}(G)$ after some minor rearranging.
	If we now apply this to $J_\beta$, we see that
	\begin{align*}
	J_\beta(\varphi) 
	\geq& \left(\frac{1}{4\beta} - \frac{1}{4\gamma} \right) \int_G |\grad \varphi|^2 - \ln\left(\frac{C_G}{|G|}\right) \\
	=& \frac{1}{8\beta} \int_G |\grad \varphi|^2 - \ln\left(\frac{C_G}{|G|}\right)
	\end{align*}
	for all $\varphi \in W^{1,2}(G)$, which completes the proof.
\end{proof} \noindent
This property will now enable us to find a minimizer $\varphi_\beta$ for each $J_\beta$ by first allowing us to construct a minimizing sequence for each functional and then arguing that said sequences converge in certain topologies to some limit function in $W^{1,2}(G)$. We then only need to further show that said convergence properties lead to sufficient estimates to ensure that the limit object is in fact an actual minimizer.
\\[0.5em]
Moreover by utilizing the now established minimizer property, we will additionally show that each $\varphi_\beta$ solves a certain weak elliptic Neumann boundary value problem as a first step in our efforts to show that $J_\beta(\varphi_\beta) = 0$. 
\begin{lemma}
	\label{lemma:minimizer}
	For each $\beta \in (0,\pi]$, there exists a function $\varphi_\beta \in W^{1,2}(G)$ with
	\[
	\int_{G} \varphi_\beta = 0
	\]
	and
	\[
	\frac{1}{2\beta}\int_{G} \grad \varphi_\beta \cdot \grad \psi = - \frac{1}{|G|} \int_{G} \psi + \frac{\int_{G} \psi e^{\varphi_\beta}}{\int_{G} e^{\varphi_\beta}} \numberthis \label{eq:minimizer-weak-neumann}
	\]
	for all $\psi \in W^{1,2}(G)$, which is a minimizer of $J_\beta$,
	meaning that
	\[
		\inf_{\varphi \in W^{1,2}(G)} J_\beta(\varphi) = J_\beta(\varphi_\beta).
	\] 
\end{lemma}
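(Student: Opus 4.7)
\textbf{Proof plan for \Cref{lemma:minimizer}.}

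The starting observation is that $J_\beta$ is invariant under addition of constants, i.e.\ $J_\beta(\varphi + c) = J_\beta(\varphi)$ for every $c \in \R$, since both the mean term $\tfrac{1}{|G|}\int_G \varphi$ and the $\ln$-term shift by exactly $c$, while the gradient term is untouched. Consequently, I may restrict the minimization to the closed subspace $X \defs \{\varphi \in W^{1,2}(G) : \int_G \varphi = 0\}$ without altering the infimum, and any minimizer found on $X$ automatically satisfies the normalization $\int_G \varphi_\beta = 0$ demanded by the statement.

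The plan for existence is the direct method of the calculus of variations. Pick a minimizing sequence $(\varphi_k) \subseteq X$. By \Cref{lemma:J_coercive} the gradients are bounded, $\|\grad \varphi_k\|_{L^2(G)}^2 \leq 8\beta\bigl(J_\beta(\varphi_k) + C\bigr)$, and the Poincaré inequality on $X$ upgrades this to a uniform bound in $W^{1,2}(G)$. I then pass to a (non-relabeled) subsequence with $\varphi_k \rightharpoonup \varphi_\beta$ in $W^{1,2}(G)$, $\varphi_k \to \varphi_\beta$ strongly in every $L^p(G)$ with $p \in [1,\infty)$ and a.e.\ on $G$. The mean-zero property and the weak lower semicontinuity
\[
\int_G |\grad \varphi_\beta|^2 \leq \liminf_{k\to\infty} \int_G |\grad \varphi_k|^2
\]
are inherited in a standard way. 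The delicate point is to show that
\[
\ln\!\Bigl(\tfrac{1}{|G|}\int_G e^{\varphi_k}\Bigr) \;\longrightarrow\; \ln\!\Bigl(\tfrac{1}{|G|}\int_G e^{\varphi_\beta}\Bigr),
\]
because only then does $J_\beta(\varphi_\beta) \leq \liminf J_\beta(\varphi_k) = \inf J_\beta$. For this I would use the Trudinger--Moser \Cref{corollary:moser-trudinger-raw} applied to $q\varphi_k$ for some $q > 1$: since $(\varphi_k)$ is bounded in $W^{1,2}(G)$, the sequence $(e^{\varphi_k})$ is bounded in $L^q(G)$ and therefore uniformly integrable. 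Combining uniform integrability with a.e.\ convergence via Vitali's convergence theorem yields $\int_G e^{\varphi_k} \to \int_G e^{\varphi_\beta}$, and since all these integrals are strictly positive and finite, the logarithm is continuous under this convergence. This step is the main obstacle; without the Trudinger--Moser bound the exponential integral is only lower semicontinuous along weakly convergent sequences, which would go in the wrong direction.

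Having established that $\varphi_\beta$ is a minimizer, the weak Neumann identity \eqref{eq:minimizer-weak-neumann} follows from the Euler--Lagrange equation. For arbitrary $\psi \in W^{1,2}(G)$ and $t \in \R$, the function $t \mapsto J_\beta(\varphi_\beta + t\psi)$ attains its minimum at $t = 0$. Differentiability of the gradient and mean terms is immediate; for the $\ln$-term I would differentiate under the integral sign, justifying this by dominating $|\psi| e^{\varphi_\beta + t\psi}$ for $|t|$ in a bounded interval via Hölder's inequality together with the Trudinger--Moser bound applied to $(1+\delta)(\varphi_\beta + t\psi)$ for some small $\delta > 0$. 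Setting the derivative at $t=0$ to zero produces
\[
\frac{1}{2\beta}\int_G \grad\varphi_\beta \cdot \grad\psi + \frac{1}{|G|}\int_G \psi - \frac{\int_G \psi\, e^{\varphi_\beta}}{\int_G e^{\varphi_\beta}} \;=\; 0,
\]
which after rearrangement is exactly \eqref{eq:minimizer-weak-neumann}.
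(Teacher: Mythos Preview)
Your proposal is correct and follows essentially the same approach as the paper: direct method on the mean-zero subspace using \Cref{lemma:J_coercive} and Poincar\'e for compactness, Trudinger--Moser to control the exponential term, and the first variation for \eqref{eq:minimizer-weak-neumann}. The only cosmetic difference is that you pass to the limit in $\int_G e^{\varphi_k}$ via an $L^q$-bound plus Vitali's theorem, whereas the paper uses the mean value theorem estimate $|e^{\varphi_k}-e^{\varphi_\beta}|\leq |\varphi_k-\varphi_\beta|\,e^{|\varphi_k|+|\varphi_\beta|}$ together with $L^2$-convergence and the same Trudinger--Moser bound; both arguments are equivalent in spirit and difficulty.
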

\begin{proof}
	We fix $\beta \in (0,\pi]$. We know from \Cref{lemma:J_coercive} that $\inf_{\varphi \in W^{1,2}(G)} J_\beta(\varphi) \geq -C$ for some $C > 0$ and we can therefore choose a (minimizing) sequence $(\varphi_k)_{k\in\N} \subseteq W^{1,2}(G)$ such that
	\[
	J_\beta(\varphi_k) \rightarrow \inf_{\varphi \in W^{1,2}(G)} J_\beta(\varphi)
	\]
	as $k \rightarrow \infty$. Without loss of generality, we can further assume that 
	\[
	\int_{G} \varphi_k = 0 \;\;\;\; \text{ for all } k\in\N
	\]
	because it is easily seen that $J_\beta$ is invariant under the addition of constants to its argument.
	Because the sequence $(J_\beta(\varphi_k))_{k\in\N}$ converges, it is bounded and thus \Cref{lemma:J_coercive} implies that the sequence \[
	\left(\int_{G} |\grad \varphi_k|^2\right)_{k\in\N}
	\]
	is bounded as well. As we know that $\int_{G} \varphi_k = 0$ for all $k\in\N$, the Poincaré inequality (cf.\ \cite[p. 312]{BrezisFAandPDE}) implies that therefore the sequence $(\varphi_k)_{k\in\N}$ is bounded in $W^{1,2}(G)$ as well. Without loss of generality (by choosing fitting subsequences), this allows us to assume that there exists a function $\varphi_\beta \in W^{1,2}(G)$ with
	\begin{equation}\label{eq:convergence_minimizer}
		\left\{ \begin{aligned}
			\varphi_k &\rightharpoonup \varphi_\beta && \text{ in } W^{1,2}(G) \\
			\varphi_k &\rightarrow \varphi_\beta && \text{ in } L^{1}(G) \text{ and } L^{2}(G)
		\end{aligned}
		\right.
	\end{equation}
	as $k \rightarrow \infty$ by standard compactness arguments. The above $L^1(G)$ convergence then ensures that $\int_G \varphi_\beta = 0$.
	Further due to the mean value theorem, we can now observe that
	\begin{align*}
	\left| \int_{G} e^{\varphi_k} - \int_{G} e^{\varphi_\beta} \right| \leq \int_{G} |e^{\varphi_k} - e^{\varphi_\beta}| \leq \int_{G} |\varphi_k - \varphi_\beta| e^{|\varphi_k| + |\varphi_\beta|} 
	\leq \|\varphi_k - \varphi_\beta\|_\L{2} \left(\int_{G} e^{2|\varphi_k| + 2|\varphi_\beta|}\right)^{\frac{1}{2}}
	\end{align*}
	for all $k\in\N$. By the $L^2(G)$ convergence from (\ref{eq:convergence_minimizer}) and using the fact that $\int_{G} e^{2|\varphi_k| + 2|\varphi_\beta|}$ is uniformly bounded due to \Cref{corollary:moser-trudinger-raw} and the $W^{1,2}(G)$ bound for the sequence already established prior, this directly implies that 
	\[
		\int_{G} e^{\varphi_k} \rightarrow \int_{G} e^{\varphi_\beta}
	\]
	as $k \rightarrow \infty$.
	\\[0.5em]
	Using this convergence property combined with (\ref{eq:convergence_minimizer}), we then see that
	\begin{align*}
	\inf_{\varphi \in W^{1,2}(G)} J_\beta(\varphi)= \lim_{k\rightarrow \infty} J_\beta(\varphi_k)  = \frac{1}{4\beta} \liminf_{k\rightarrow \infty} \int_{G} |\grad \varphi_k|^2 + \frac{1}{|G|}\lim_{k\rightarrow \infty} \int_{G} \varphi_k - \ln \left(\frac{1}{|G|}\lim_{k\rightarrow \infty} \int_{G} e^{\varphi_k}\right) 
	\geq J_\beta(\varphi_\beta)
	\end{align*}
	and therefore that
	\[
	J_\beta(\varphi_\beta) = \inf_{\varphi \in W^{1,2}(G)} J_\beta(\varphi).
	\]
	Thus, $\varphi_\beta$ is a minimizer of $J_\beta$.
	\\[0.5em]
	It now only remains to show that $\varphi_\beta$ is also a weak solution of the Neumann problem corresponding to (\ref{eq:minimizer-weak-neumann}). To this end, let now $\psi \in W^{1,2}(G)$ be fixed, but arbitrary. We then consider the function
	\[
	f \colon (-1,1) \rightarrow \R, \;\;\;\; t \mapsto J_\beta(\varphi_\beta + t\psi),
	\]
	which has a global minimum in $0$ by our observations about $J_\beta$.
	Then
	\begin{align*}
	f(t) = \frac{1}{4\beta} \int_{G} |\grad \varphi_\beta|^2 + t \frac{1}{2\beta} \int_{G} \grad \varphi_\beta \cdot \grad \psi + t^2 \frac{1}{4\beta}\int_{G} |\grad \psi|^2 
	+ \frac{1}{|G|} \int_G \varphi_\beta + t\frac{1}{|G|}\int_{G} \psi - \ln\left( \frac{1}{|G|}\int_{G} e^{\varphi_\beta + t\psi}\right).
	\end{align*}
	One easily sees that $f$ is differentiable as it is mostly a polynomial in $t$ and the remaining terms are amenable to results about the differentiation of parameter integrals (Note that \Cref{corollary:moser-trudinger-raw} can be used to establish the necessary integrability properties). The minimality property of $f$ in $0$ therefore implies that
	\[
	0 = f'(0) = \frac{1}{2\beta}\int_{G}\grad \varphi_\beta \cdot \grad \psi + \frac{1}{|G|} \int_{G}\psi - \frac{\int_G \psi e^{\varphi_\beta}}{\int_{G}e^{\varphi_\beta}},
	\]
	which gives us (\ref{eq:minimizer-weak-neumann}) and thus completes the proof.
\end{proof}
\noindent Having now constructed the minimizers $\varphi_\beta$, the key to showing that for sufficiently small $\beta$ we have $J_\beta \geq J_\beta(\varphi_\beta) = 0$ is understanding the weak elliptic Neumann problem 
\[
	\left\{ \begin{aligned} 
		-\tfrac{1}{2\beta} \laplace \varphi_\beta  &= -\tfrac{1}{|G|} + \tfrac{e^{\varphi_\beta}}{\int_G e^{\varphi_\beta}} \;\;\;\;\;\;&&\text{ on }G,\\
		\grad \varphi_\beta \cdot \nu &= 0 && \text{ on }\partial G.
		\end{aligned}
	\right.
\]
In this regard, the two most crucial insights about the above system as well as the minimizers are the following:
First, the minimizers are bounded in $W^{1,2}(G)$ independent of $\beta$ as a consequence of their minimization property, which by the Trudinger--Moser inequality as well as elliptic regularity properties of the system further results in an $L^\infty(G)$ bound for the minimizers, which is $\beta$-independent as well. Second by reducing the value of $\beta$, we can make the Laplacian in the above system arbitrarily strong when compared to the source terms on the right, which manifests as the following property: When only considering solutions that are normalized to $\int_G \varphi_\beta = 0$ and are bounded in $L^\infty(G)$ by some constant $C > 0$ independent of $\beta$, we can increase the strength of the Laplacian to such a degree that at some point the only member of the aforementioned solution class is $\varphi_\beta  \equiv 0$.
\\[0.5em]
Combined, this means that, for sufficiently small $\beta$, the minimizers $\varphi_\beta$ must be everywhere equal to zero as well. But this directly implies $J_\beta (\varphi_\beta) = J_\beta (0) = 0$. 
\\[0.5em]
We will now make these ideas precise to prove the following lemma.
\begin{lemma}
	There exists $\beta_0 \in (0,\pi]$ such that 
	\[
	\inf_{\varphi \in W^{1,2}(G)} J_{\beta}(\varphi) = 0
	\]
	for all $\beta \in (0, \beta_0]$.
	\label{lemma:minizer_0}
\end{lemma}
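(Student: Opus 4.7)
The plan is to work with the minimizers $\varphi_\beta$ from \Cref{lemma:minimizer} and to show that they must vanish identically once $\beta$ is taken small enough. Since $J_\beta(0) = 0$ obviously, this will immediately give $\inf J_\beta = J_\beta(\varphi_\beta) = J_\beta(0) = 0$.

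First, I would derive a uniform $W^{1,2}(G)$ bound on $\varphi_\beta$. From $J_\beta(\varphi_\beta) \leq J_\beta(0) = 0$ and $\int_G \varphi_\beta = 0$, applying \Cref{corollary:moser-trudinger-raw} at the parameter $2\beta \leq 2\pi$ to the logarithmic term in $J_\beta(\varphi_\beta)$ yields
\[
\frac{1}{4\beta}\int_G|\grad\varphi_\beta|^2 \leq \ln\!\bigl(\tfrac{C_G}{|G|}\bigr) + \frac{1}{8\beta}\int_G|\grad\varphi_\beta|^2,
\]
which absorbs into $\|\grad\varphi_\beta\|_{L^2(G)}^2 \leq 8\beta\ln(C_G/|G|)$, uniformly for $\beta \in (0,\pi]$. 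Combined with the Poincaré inequality this bounds $\varphi_\beta$ in $W^{1,2}(G)$ uniformly, and moreover gives $\|\grad\varphi_\beta\|_{L^2(G)}^2 = O(\beta)$ as $\beta \searrow 0$.

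Second, I would promote this to a uniform $L^\infty(G)$ bound. Applying \Cref{corollary:moser-trudinger-raw} to $p\varphi_\beta$ (and using the $W^{1,2}$ bound just derived) yields uniform $L^p(G)$ bounds on $e^{\varphi_\beta}$ for every $p \in [1,\infty)$, while Jensen's inequality gives the denominator bound $\int_G e^{\varphi_\beta} \geq |G|$. Hence the right-hand side of the Neumann problem (\ref{eq:minimizer-weak-neumann}), rewritten as
\[
-\laplace \varphi_\beta = 2\beta\Bigl( \frac{e^{\varphi_\beta}}{\int_G e^{\varphi_\beta}} - \frac{1}{|G|} \Bigr) \text{ in } G, \qquad \grad\varphi_\beta \cdot \nu = 0 \text{ on } \partial G,
\]
is uniformly bounded in every $L^p(G)$. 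Standard $L^p$-elliptic regularity for the Neumann Laplacian on the $C^2$-domain $G$, together with $\int_G \varphi_\beta = 0$, then gives uniform $W^{2,p}(G)$-bounds on $\varphi_\beta$, and Sobolev embedding for $p > 2$ produces a $\beta$-independent estimate $\|\varphi_\beta\|_{L^\infty(G)} \leq M$.

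Third, and this is the decisive step, I would exploit that the nonlinearity $F(\varphi) \defs e^{\varphi}/\!\int_G e^{\varphi} - 1/|G|$ vanishes at $\varphi\equiv 0$ and is Lipschitz on the $L^\infty$-ball of radius $M$; concretely, the bounds $e^{-M} \leq e^{\varphi_\beta} \leq e^M$ together with the Jensen lower bound on $\int_G e^{\varphi_\beta}$ yield $\|F(\varphi_\beta)\|_{L^\infty(G)} \leq L\|\varphi_\beta\|_{L^\infty(G)}$ with some $L=L(M,G)$. Feeding this back into the Neumann problem and reapplying $L^p$-theory plus Sobolev embedding produces a self-improving inequality of the shape $\|\varphi_\beta\|_{L^\infty(G)} \leq K\beta\, \|\varphi_\beta\|_{L^\infty(G)}$ with $K>0$ independent of $\beta$. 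Choosing $\beta_0\in(0,\pi]$ so that $K\beta_0 < 1$ forces $\varphi_\beta\equiv 0$ for every $\beta\in(0,\beta_0]$, which concludes the argument. The main obstacle I anticipate is organizing this self-improving estimate cleanly, in particular ensuring that the Neumann $L^p$-regularity constants on the (merely finitely connected) $C^2$-domain $G$ are indeed independent of $\beta$ and that the Lipschitz bound on $F$ genuinely reduces to $\|\varphi_\beta\|_{L^\infty(G)}$ rather than producing a spurious constant term that would ruin the absorption.
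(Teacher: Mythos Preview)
Your proposal is correct and follows the same architecture as the paper: first a $\beta$-uniform gradient bound via coercivity and $J_\beta(\varphi_\beta)\le J_\beta(0)=0$, then a $\beta$-uniform $L^\infty(G)$ bound via elliptic regularity for the Euler--Lagrange Neumann problem and Sobolev embedding, and finally a self-improving estimate that forces $\varphi_\beta\equiv 0$ for small $\beta$. The only difference is in the closing step. The paper tests the weak form \eqref{eq:minimizer-weak-neumann} with $\psi=\varphi_\beta$ itself, uses $\int_G e^{\varphi_\beta}\ge |G|$ and the mean value estimate $|\,e^{\varphi_\beta}-1|\le |\varphi_\beta|\,e^{|\varphi_\beta|}$, and then Poincar\'e to obtain
\[
\frac{1}{2\beta}\int_G|\grad\varphi_\beta|^2 \le \frac{e^{K_6}C_{\mathrm p}^2}{|G|}\int_G|\grad\varphi_\beta|^2,
\]
which absorbs directly at the $L^2$-gradient level; this needs only $L^2$-elliptic regularity and a single application of the mean value theorem. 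Your route---bounding $F(\varphi_\beta)=e^{\varphi_\beta}/\!\int_G e^{\varphi_\beta}-1/|G|$ by $L\|\varphi_\beta\|_{L^\infty(G)}$ and then passing back through $L^p$-regularity and Sobolev to close in $L^\infty(G)$---is equally valid (your worry about a spurious constant is unfounded precisely because $F(0)=0$), but requires $L^p$-theory for some $p>2$ and a slightly more delicate Lipschitz computation. The paper's variant is a touch cleaner; yours is perfectly sound.
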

\begin{proof}
	For each $\beta \in (0,\pi]$, let $\varphi_\beta$ be the minimizer of $J_\beta$ found in \Cref{lemma:minimizer}. First note that there exists a constant $K_1 > 0$ such that
	\[
	0 = J_\beta(0) \geq \inf_{\varphi \in W^{1,2}(G)} J_\beta(\varphi) = J_\beta(\varphi_\beta) \geq \frac{1}{8\beta} \int_G |\grad \varphi_{\beta}|^2 - K_1
	\]
	and therefore 
	\[
	\int_G |\grad \varphi_{\beta}|^2 \leq 8\beta K_1 \leq 8\pi K_1
	\]
	for all $\beta \in (0,\pi]$ by \Cref{lemma:J_coercive}. 
	We now further observe that 
	\begin{equation}
	\int_{G} e^{\varphi_\beta} = \frac{|G|}{|G|} \int_{G} e^{\varphi_\beta}\geq|G| \exp\left(\frac{1}{|G|}\int_{G} \varphi_\beta \right) = |G| \label{eq:e_phi_bound}
	\end{equation}
	because of Jensen's inequality and the fact that $\int_{G} \varphi_\beta = 0$. Together, these two inequalities then give us that
	\[
	\left\|\frac{ e^{\varphi_\beta}}{\int_{G} e^{\varphi_\beta}}\right\|^2_{L^{2}(G)} \leq \frac{1}{|G|^2} \int_{G} e^{2\varphi_\beta} \leq \frac{C_G}{|G|^2} \exp\left( \frac{1}{2\pi}\int_{G} |\grad \varphi_\beta|^2\right)\leq
	\frac{C_G}{|G|^2} e^{4K_1} =: K_2 \numberthis \label{eq:minimizer-l2}
	\]
	for all $\beta \in (0,\pi]$ by way of \Cref{corollary:moser-trudinger-raw}. We further know that the functions $\varphi_\beta$ solve a weak Neumann problem in the sense seen in (\ref{eq:minimizer-weak-neumann}), which gives us that
	\[
		\int_{G} \grad \varphi_\beta \cdot \grad \psi = 2\beta\left[- \frac{1}{|G|} \int_{G} \psi + \frac{\int_{G} \psi e^{\varphi_\beta}}{\int_{G} e^{\varphi_\beta}}\right] 
	\]
	for all $\psi \in W^{1,2}(G)$ and $\beta \in (0,\pi]$. This and the fact that $\int_{G} \varphi_\beta = 0$ makes $\varphi_\beta$ accessible to standard elliptic regularity theory as e.g.\ found in Lemma 2 on page 217 of Reference \cite{MR601389}. Using said regularity results in combination with (\ref{eq:minimizer-l2}), we then gain a constant $K_3 > 0$ such that
	\begin{align*}
	\|\varphi_\beta\|_{W^{2,2}(G)} \leq& K_3 \left\| 2\beta\left[ -\frac{1}{|G|} + \frac{e^{\varphi_\beta}}{\int_{G} e^{\varphi_\beta}} \right] \right\|_\L{2} \\
	\leq & 2 K_3\beta \left[ \frac{1}{\sqrt{|G|}} + \left\|\frac{ e^{\varphi_\beta}}{\int_{G} e^{\varphi_\beta}}\right\|_\L{2} \right] \\
	\leq & 2 K_3 \pi\left[ \frac{1}{\sqrt{|G|}} +\sqrt{K_2} \right] =: K_4 \;\;\;\;\;\; \text{ for all } \beta \in (0,\pi].
	\end{align*}
	We can now further use the two-dimensional Sobolev inequality to find a constant $K_5 > 0$ with
	\[
	\|\varphi_\beta\|_\L{\infty} \leq K_5 \|\varphi_\beta\|_{W^{2,2}(G)} \leq K_4 K_5 =: K_6
	\]
	for all $\beta \in (0,\pi]$, meaning that the functions $\varphi_\beta$ are in fact uniformly bounded in $W^{2,2}(G)$ and $L^\infty(G)$. 
	\\[0.5em]
	We now set $\psi = \varphi_\beta$ in (\ref{eq:minimizer-weak-neumann}) to see that
	\begin{align*}\numberthis 
	\frac{1}{2\beta}\int_{G} |\grad \varphi_\beta|^2 = - \frac{1}{|G|} \int_{G} \varphi_\beta + \frac{\int_{G} \varphi_\beta e^{\varphi_\beta}}{\int_{G} e^{\varphi_\beta}} \;\;\;\; \text{ for all } \beta \in (0, \pi].
	\label{eq:minizer-test-with-self}  
	\end{align*}
	As a first consequence of (\ref{eq:minizer-test-with-self}) and the fact that $\int_G \varphi_\beta = 0$, we gain that
	\[
	\int_{G} \varphi_\beta e^{\varphi_\beta} \geq 0
	\] 
	and therefore that
	\[
	\frac{\int_{G} \varphi_\beta e^{\varphi_\beta}}{\int_{G} e^{\varphi_\beta}} \leq \frac{\int_{G} \varphi_\beta e^{\varphi_\beta}}{|G|}
	\]
	because of (\ref{eq:e_phi_bound}) for all $\beta \in (0,\pi]$. If we then apply this to (\ref{eq:minizer-test-with-self}), we gain that
	\begin{align*}
	\frac{1}{2\beta}\int_{G} |\grad \varphi_\beta|^2 &\leq \frac{1}{|G|} \left( \int_{G} \varphi_\beta ( e^{\varphi_\beta} - 1) \right) \leq \frac{1}{|G|} \left( \int_{G} |\varphi_\beta| | e^{\varphi_\beta} - e^0 |\right) \\
	&\leq \frac{1}{|G|} \int_{G} |\varphi_\beta|^2 e^{|\varphi_\beta|} \leq \frac{e^{K_6}}{|G|} \int_{G} |\varphi_\beta|^2 \leq \frac{e^{K_6}C_\text{p}^2}{|G|} \int_{G} |\grad \varphi_\beta|^2 \;\;\;\; \text{ for all } \beta \in (0, \pi]
	\end{align*}
	by the mean value theorem and the Poincaré inequality with constant $C_\text{p} > 0$. If $\beta$ is now smaller than or equal to 
	\[
	\beta_0 \defs \min\left(\frac{|G|}{4e^{K_6}C_\text{p}^2}, \pi \right),
	\]
	we gain 
	\[
	\int_{G} |\grad \varphi_\beta|^2 = 0
	\]
	from the previous inequality, which implies that $\varphi_\beta = 0$ as $\int_\Omega \varphi_\beta  = 0$. Therefore
	\[
	\inf_{\varphi \in W^{1,2}(G)} J_\beta(\varphi) = 
	J_\beta(\varphi_\beta) = J_\beta(0) = 0
	\]
	for all $\beta \in (0,\beta_0]$, which completes the proof.
\end{proof}\noindent
This new insight now allows us to significantly improve upon \Cref{corollary:moser-trudinger-raw} (along one specific axis) by just rearranging some terms in the functional $J_\beta$ defined in (\ref{eq:J_functional}) to gain the following:
\begin{corollary}
	For each $0 < \beta \leq \beta_0$ and $\varphi \in W^{1,2}(G)$, we have 
	\begin{equation}
	\int_{G} e^\varphi \leq |G| \exp\left( \frac{1}{4\beta}\int_{G} |\grad \varphi|^2  + \frac{1}{|G|}\int_{G} \varphi \right)
	\end{equation}
	with $\beta_0$ from \Cref{lemma:minizer_0}.
	\label{corollary:moser-trudinger-optimal}
\end{corollary}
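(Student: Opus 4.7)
The plan is that this corollary is essentially an immediate rearrangement of the conclusion of \Cref{lemma:minizer_0}, so no substantive new work is needed. Concretely, I would fix $\beta \in (0, \beta_0]$ and $\varphi \in W^{1,2}(G)$, and apply \Cref{lemma:minizer_0} to infer that
\[
0 \;=\; \inf_{\tilde\varphi \in W^{1,2}(G)} J_\beta(\tilde\varphi) \;\leq\; J_\beta(\varphi) \;=\; \frac{1}{4\beta}\int_G |\grad \varphi|^2 + \frac{1}{|G|}\int_G \varphi - \ln\!\left(\frac{1}{|G|}\int_G e^\varphi\right),
\]
which rearranges to
\[
\ln\!\left(\frac{1}{|G|}\int_G e^\varphi\right) \;\leq\; \frac{1}{4\beta}\int_G |\grad \varphi|^2 + \frac{1}{|G|}\int_G \varphi.
\]

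From here I would just exponentiate both sides (the logarithm is well defined since $\int_G e^\varphi > 0$ by the positivity noted after \Cref{corollary:moser-trudinger-raw}, in fact $\int_G e^\varphi \geq |G|$ by Jensen as in (\ref{eq:e_phi_bound}), but positivity alone is what matters here) and multiply through by $|G|$ to land on the claimed inequality.

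There is really no obstacle; the entire content of the corollary has already been packaged into the lemma, and the proof is essentially one line of algebra. The only thing worth a brief comment is that the corollary is an improvement over \Cref{corollary:moser-trudinger-raw} along the constant-in-front axis: where the Trudinger--Moser corollary gives the factor $C_G \geq |G|$ valid up to $\beta = 2\pi$, here we have replaced $C_G$ by the minimal admissible value $|G|$ at the cost of restricting $\beta$ to the smaller range $(0,\beta_0]$, which is precisely the tradeoff anticipated in the key-ideas discussion at the start of the section.
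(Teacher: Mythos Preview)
Your proposal is correct and matches the paper's own approach exactly: the paper also states that the corollary follows ``by just rearranging some terms in the functional $J_\beta$'' after \Cref{lemma:minizer_0}. One tiny quibble: your parenthetical remark that $\int_G e^\varphi \geq |G|$ by Jensen as in (\ref{eq:e_phi_bound}) only holds when $\int_G \varphi = 0$ (which was the normalization for $\varphi_\beta$ there), but as you correctly note, only positivity of $\int_G e^\varphi$ is needed, so this does not affect the argument.
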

\subsection{Proving our new functional inequalities}
After this brief excursion into the calculus of variations and the theory of elliptic problems, we will now refocus on proving \Cref{theorem:func_ineq} using our optimal \Cref{corollary:moser-trudinger-optimal} and similar methods as those seen in \cite{MyExistence} and \cite{WinklerSmallMass}:
\begin{proof}[Proof of \Cref{theorem:func_ineq}]
	Let $\varphi \in W^{1,2}(G)$, $\psi \in L^p(G)$ with $p > 1$, let $\psi$ be positive and let $m \defs \int_G \psi > 0$.
	Observe now for any $a > 0$ that
	\[
	\ln\left( \int_G e^{a\varphi} \right) = \ln\left( \int_G e^{a\varphi} \frac{m}{\psi} \frac{\psi}{m} \right) \geq \int_G \left( \ln(e^{a\varphi}) + \ln\left(\frac{m}{\psi}\right) \right)\frac{\psi}{m}= \frac{}{} \frac{a}{m}\int_G \varphi \psi - \frac{1}{m}\int_G \psi\ln\left(\frac{\psi}{m}\right) 
	\]
	by Jensen's inequality. Note that our choices of function spaces for $\varphi$ and $\psi$ ensure that all the integrals are well defined. If we now combine this with \Cref{corollary:moser-trudinger-optimal} (applied to $a\varphi$) and multiply by $\frac{m}{a}$, we get that
	\begin{align*}
		\int_G \varphi \psi &\leq \frac{1}{a}\int_G \psi\ln\left(\frac{\psi}{m}\right) + \frac{m}{a} \ln\left( |G| \exp\left( \frac{a^2}{4\beta_0} \int_\Omega |\grad \varphi|^2 + \frac{a}{|G|}\int_\Omega \varphi  \right)\right) \\
		&= \frac{1}{a}\int_G \psi\ln\left(\frac{\psi}{m}\right)  + \frac{am}{4\beta_0} \int_G |\grad \varphi|^2 + \frac{m}{|G|}  \int_G \varphi + \frac{m}{a}\ln(|G|) \\ 
		&= \frac{1}{a}\int_G \psi\ln\left(\frac{\psi}{\overline{\psi}}\right)  + \frac{a}{4\beta_0} \left\{\int_G \psi\right\}\int_G |\grad \varphi|^2 + \frac{m}{|G|}  \int_G \varphi
	\end{align*}
	or further that
	\[
	\int_G \varphi (\psi - \overline{\psi}) \leq \frac{1}{a} \int_G \psi \ln\left(\frac{\;\psi\;}{\overline{\psi}}\right) + \frac{a}{4\beta_0}\left\{\int_G \psi\right\} \int_G |\grad \varphi|^2 
	\]
	after some rearranging with $\overline{\psi} \defs \frac{1}{|G|}\int_G \psi$, which is (\ref{eq:func_ineq1}) exactly.
	\\[0.5em]
	For $\psi \in L^p(G)$ with $p > 1$, $\psi$ positive with $\ln(\psi) \in W^{1,2}(G)$, we now set \[
		\varphi \defs \ln\left(\frac{\;\psi\;}{\overline{\psi}}\right) \;\;\;\;\text{ and }\;\;\;\; a \defs 2
	\] 
	in the previous inequality to get that
	\[
	\int_G \psi \ln\left(\frac{\;\psi\;}{\overline{\psi}}\right) - \overline{\psi}\int_G \ln\left(\frac{\;\psi\;}{\overline{\psi}}\right) \leq \frac{1}{2} \int_G \psi \ln\left(\frac{\;\psi\;}{\overline{\psi}}\right) + \frac{1}{2\beta_0} \left\{\int_G \psi\right\} \int_G |\grad \ln(\psi)|^2 .
	\] 
	Because by Jensen's inequality we have \[
		\int_G \ln\left(\frac{\;\psi\;}{\overline{\psi}}\right) \leq |G| \ln\left( \frac{\;\frac{1}{|G|}\int_G \psi \;}{\overline{\psi}}\right) = |G|\ln(1) = 0,
	\]
	this directly implies the inequality  (\ref{eq:func_ineq2}).
\end{proof}
\section{Eventual smoothness of solutions to a chemotaxis-Navier--Stokes system with rotational flux components}
\label{section:application}
Having proven some essential and powerful tools to deal with the lack of easily accessible a priori information (due to the non-scalar sensitivity $S$ and full Navier--Stokes fluid model), we will now begin the journey toward the second result of this paper, namely the derivation of eventual smoothness properties for the solutions constructed in \Cref{theorem:ext_existence}.
\subsection{Generalized solution concept and approximate solutions}
As our first step, let us now briefly recall some key points from \cite{MyExistence} concerning these solutions. First, we want to clarify what is actually meant when talking about generalized mass-preserving solutions in \Cref{theorem:ext_existence}:
\begin{definition}
	\label{definition:weak_solution}
	Let $\Omega \subseteq \R^2$ be a bounded, convex domain with a smooth boundary and $f$, $S$ and $\phi$ be parameter functions that conform to (\ref{f_regularity})--(\ref{phi_regularity}). Further let $(n_0, c_0, u_0)$ be some initial data with the properties outlined in (\ref{initial_data_props}).
	\\[0.5em]
	We then call a triple of functions 
	\begin{align*}
	&n \in L^\infty((0,\infty); L^1(\Omega)), \\
	&c \in L^\infty_\loc(\overline{\Omega}\times[0,\infty)) \cap L^2_\loc([0,\infty); W^{1,2}(\Omega)) \;\; \text{ and } \;\; \numberthis\label{wsol:regularity} \\
	&u \in L^\infty_\loc([0,\infty);(L^2(\Omega))^2) \cap L^2_\loc([0,\infty);(W_0^{1,2
	}(\Omega))^2)		
	\end{align*}
	with $n \geq 0$, $c \geq 0, \div u = 0$ a.e.\ in $\Omega\times(0,\infty)$,
	\begin{equation}
	\int_{\Omega} n(\cdot,t) = \int_{\Omega} n_0 \;\;\;\; \text{ for a.e.\ } t > 0 \label{wsol:mass_perservation}
	\end{equation}
	and 
	\begin{equation}
	\ln(n + 1) \in L^2_\loc([0,\infty);W^{1,2}(\Omega))
	\label{wsol:ln_n_regularity}
	\end{equation}
	a global mass-preserving generalized solution of (\ref{problem})--(\ref{initial_data}) if the inequality
	\begin{align*}
	-\int_0^\infty \int_{\Omega} \ln(n+1)\varphi_t - \int_{\Omega} \ln(n_0 + 1)\varphi(\cdot, 0) \geq& \int_0^\infty \int_{\Omega} \ln(n+1)\laplace\varphi + \int_0^\infty \int_{\Omega} |\grad \ln(n+1)|^2\varphi \\
	& - \int_0^\infty \int_{\Omega} \frac{n}{n+1} \grad \ln(n+1) \cdot (S(x,n,c)\grad c)\varphi \\
	& + \int_0^\infty \int_{\Omega} \frac{n}{n+1} (S(x,n,c) \grad c)\cdot \grad \varphi \\
	& + \int_0^\infty \int_\Omega \ln(n+1)(u\cdot \grad \varphi) \numberthis \label{wsol:ln_n_inequality}
	\end{align*}
	holds for all nonnegative $\varphi\in C_0^\infty(\overline{\Omega}\times[0,\infty))$ with $\grad \varphi \cdot \nu = 0$ on $\partial \Omega\times[0,\infty)$, if further
	\begin{equation}
	\int_0^\infty\int_{\Omega} c\varphi_t + \int_{\Omega} c_0\varphi(0, \cdot) = \int_0^\infty \int_\Omega \grad c \cdot \grad \varphi + \int_0^\infty \int_\Omega n f(c) \varphi - \int_0^\infty \int_\Omega c(u \cdot \grad \varphi) \label{wsol:c_equality}
	\end{equation}
	holds for all $\varphi \in L^\infty(\Omega\times(0,\infty)) \cap L^2((0,\infty); W^{1,2}(\Omega))$ having compact support in $\overline{\Omega}\times[0,\infty)$ with $\varphi_t \in L^2(\Omega\times(0,\infty))$, and if finally 
	\begin{equation}
	-\int_0^\infty \int_\Omega u \cdot \varphi_t - \int_\Omega u_0 \cdot \varphi(\cdot, 0) = -\int_0^\infty \int_\Omega \grad u \cdot \grad \varphi + \int_0^\infty \int_\Omega (u \otimes u)\cdot\grad \varphi + \int_0^\infty\int_\Omega n\grad\phi \cdot \varphi
	\label{wsol:u_equality}
	\end{equation}
	holds for all $\varphi \in C_0^\infty(\overline{\Omega}\times[0,\infty); \R^2)$ with $\div \varphi = 0$ on $\Omega\times[0,\infty)$.
\end{definition} 
\noindent Second, let us review one important detail about the construction of the generalized solutions $(n, c, u)$ in \cite{MyExistence},
namely the fact that they are the (almost everywhere) pointwise limits of some approximate solutions \[((n_\eps, c_\eps, u_\eps, P_\eps))_{\eps \in (0,1)}\] with
\begin{align*}
	n_\eps, c_\eps &\in C^0(\overline{\Omega}\times[0,\infty))\cap C^{2,1}(\overline{\Omega}\times(0,\infty)), \\
	u_\eps &\in C^0(\overline{\Omega}\times[0,\infty);\R^2)\cap C^{2,1}(\overline{\Omega}\times(0,\infty);\R^2) \\
	P_\eps &\in C^{1,0}(\overline{\Omega}\times(0,\infty))
\end{align*}
and $c_\eps \geq 0, n_\eps > 0$
along a suitable sequence $(\eps_j)_{j\in\N} \subseteq (0,1)$ with $\eps_j \searrow 0$ as $j \rightarrow \infty$. The approximate solutions $(n_\eps, c_\eps, u_\eps, P_\eps)$ solve the following regularized version of (\ref{problem}) with (\ref{boundary_conditions}) and (\ref{initial_data}):
\[
	\left\{
	\begin{aligned}
	{n_\eps}_t + u_\eps \cdot \grad n_\eps \;=&\;\; \laplace n_\eps - \div (n_\eps S_\eps(x, n_\eps, c_\eps)\grad c_\eps), \;\;\;\;\;\;\; && x \in \Omega, \; t > 0,\\
	{c_\eps}_t + u_\eps \cdot \grad c_\eps \;=&\;\; \laplace c_\eps - n_\eps f(c_\eps), && x \in \Omega, \; t > 0,\\
	{u_\eps}_t + (u_\eps \cdot \grad ) u_\eps \;=&\;\; \laplace u_\eps + \grad P_\eps + n_\eps\grad \phi,  && x \in \Omega, \; t > 0, \\
	\div u_\eps \;=&\;\; 0,  && x \in \Omega, \; t > 0, \\
	\grad n_\eps \cdot \nu = \grad c_\eps \cdot \nu = 0&, \;\;\;\; u_\eps = 0, && x \in \partial\Omega, \; t > 0, \\
	n_\eps(x,0) = n_0(x), \;\; &c_\eps(x,0) = c_0(x), \;\; u_\eps(x,0) = u_0(x), && x \in \Omega
	\end{aligned}
	\numberthis \label{approx_system}
	\right..
\] 
The key difference of the above system in comparison to the original is that the sensitivity $S$ is approximated by functions $S_\eps$ while the rest of the system mostly stays the same. The $S_\eps$ are constructed in such a way that they are zero for large $n_\eps$, which makes global existence arguments for the approximate solutions straightforward, and in such a way that they vanish near the boundary, which simplifies the more complex no-flux boundary conditions to standard Neumann boundary conditions. They are further made to retain property (\ref{S_0_bound}) of $S$ with the same $S_0$ and converge pointwise to $S$ on $\Omega\times[0,\infty)\times[0,\infty)$ as $\eps \searrow 0$. For the full details of the construction, see \cite[Section 2]{MyExistence}.
\\[0.5em]
Having now established the necessary context, let us fix a few things for the remainder of this paper. The set $\Omega \subseteq \R^2$ is always a bounded, convex domain with a smooth boundary and $f$, $S$ and $\phi$ are always the parameter functions mentioned in (\ref{problem}) and are assumed to conform to (\ref{f_regularity})--(\ref{phi_regularity}). We further fix some initial data $(n_0, c_0, u_0)$ with the properties outlined in (\ref{initial_data_props}). Given all this, we can then fix a corresponding solution $(n,c,u)$ as constructed in \Cref{theorem:ext_existence} and the family $((n_\eps, c_\eps, u_\eps, P_\eps))_{\eps \in (0,1)}$ of approximate solutions and sequence $(\eps_j)_{j\in\N}$ used in said construction.

\subsection{Some improved initial observations adapted from the existence theory}
As our first step in analyzing the approximate solutions fixed above, we revisit some of their properties from \cite{MyExistence}, albeit after some slight modifications and with sometimes significant improvements. The first such properties are laid out in the following lemma, which is taken almost verbatim from \cite[Lemma 3.1]{MyExistence} and provides us with some initial, important, though sometimes rather weak, a priori information about the families $(n_\eps)_{\eps \in (0,1)}$ and $(c_\eps)_{\eps \in (0,1)}$:
\begin{lemma}
	\label{lemma:basic_props}
	The mass conservation equality
	\begin{equation}
	\int_\Omega n_\eps(\cdot,t)   = \int_\Omega n_0 \label{eq:mass_perservation}
	\end{equation}
	holds for all $t > 0$, $\eps \in (0,1)$ and, for each $p \in [1,\infty]$, the inequality
	\begin{equation}
	\|c_\eps(\cdot, t)\|_\L{p} \leq \|c_\eps(\cdot, s)\|_\L{p}
	\label{eq:c_monoticity}
	\end{equation}
	holds for all $t\geq s\geq 0$ and $\eps \in (0,1)$. We further have that 
	\begin{equation}
	\int_t^\infty\int_\Omega |\grad c_\eps|^2 \leq \frac{1}{2}\int_\Omega c_\eps^2(\cdot, t)
	\label{eq:grad_c_bound}
	\end{equation}
	for all $t > 0$, $\eps \in (0,1)$ and there exists $C > 0$ such that
	\begin{equation}
	\int_0^\infty\int_\Omega \frac{|\grad n_\eps|^2}{n_\eps^2} \leq C \label{eq:weak_grad_n_smallness}
	\end{equation}
	for all $\eps \in (0,1)$.
\end{lemma}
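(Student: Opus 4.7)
The plan is to prove each of the four claims in turn using standard testing arguments against the PDEs in the approximate system \eqref{approx_system}, with the key input being that the regularization of $S$ ensures both the simplified Neumann boundary condition $\nabla n_\eps \cdot \nu = 0$ and the uniform bound $|S_\eps| \leq S_0(\|c_0\|_{L^\infty(\Omega)})$ (inherited from property (\ref{S_0_bound})). All four arguments rely on the divergence-freeness and no-slip property of $u_\eps$ to kill convective terms after integration by parts.

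\textbf{Claims \eqref{eq:mass_perservation} and \eqref{eq:c_monoticity}.} For \eqref{eq:mass_perservation}, I would just integrate the $n_\eps$-equation over $\Omega$: the Laplacian term and the divergence term vanish by the (regularized) no-flux boundary conditions, and $\int_\Omega u_\eps \cdot \nabla n_\eps = -\int_\Omega (\nabla \cdot u_\eps) n_\eps = 0$, so $\tfrac{d}{dt}\int_\Omega n_\eps \equiv 0$. For \eqref{eq:c_monoticity} with $p\in[2,\infty)$, I would test the $c_\eps$-equation with $c_\eps^{p-1}$ to get
\[
\tfrac{1}{p}\tfrac{d}{dt}\int_\Omega c_\eps^p = -(p-1)\int_\Omega c_\eps^{p-2}|\nabla c_\eps|^2 - \int_\Omega n_\eps f(c_\eps) c_\eps^{p-1},
\]
where the convective term vanishes as above and both right-hand terms are $\leq 0$ thanks to $c_\eps \geq 0$, $n_\eps > 0$ and $f \geq 0$ on $[0,\infty)$ (by \eqref{f_regularity}). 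The cases $p \in [1,2)$ follow by a standard approximation, and $p = \infty$ from the parabolic maximum principle applied to the $c_\eps$-equation, whose zero-order and convective coefficients satisfy the required sign / divergence conditions.

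\textbf{Claims \eqref{eq:grad_c_bound} and \eqref{eq:weak_grad_n_smallness}.} For \eqref{eq:grad_c_bound}, testing the $c_\eps$-equation with $c_\eps$ and dropping the nonnegative term $\int_\Omega n_\eps f(c_\eps) c_\eps$ gives $\tfrac{1}{2}\tfrac{d}{dt}\int_\Omega c_\eps^2 + \int_\Omega |\nabla c_\eps|^2 \leq 0$; integrating from $t$ to $T$ and sending $T \to \infty$ produces exactly \eqref{eq:grad_c_bound}. For \eqref{eq:weak_grad_n_smallness}, which is the substantive estimate, I would test the $n_\eps$-equation with $-\tfrac{1}{n_\eps}$ (this is the natural entropy for $-\ln n_\eps$ and is admissible since $n_\eps > 0$). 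After integration by parts, using $\int_\Omega \Delta n_\eps / n_\eps = \int_\Omega |\nabla n_\eps|^2/n_\eps^2$ and the vanishing of the convective contribution, this yields the identity
\[
-\tfrac{d}{dt}\int_\Omega \ln n_\eps + \int_\Omega \tfrac{|\nabla n_\eps|^2}{n_\eps^2} = \int_\Omega \tfrac{S_\eps(x,n_\eps,c_\eps)\nabla c_\eps \cdot \nabla n_\eps}{n_\eps}.
\]
Applying Young's inequality to absorb half of the gradient term and using $|S_\eps| \leq S_0(\|c_0\|_{L^\infty(\Omega)})$ bounds the right-hand side by $C\int_\Omega |\nabla c_\eps|^2$. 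Integrating from $0$ to $T$, using \eqref{eq:grad_c_bound} on the right and controlling the boundary term via $\int_\Omega \ln n_\eps(\cdot,T) \leq \int_\Omega n_\eps(\cdot,T) = \int_\Omega n_0$ (from $\ln x \leq x$ and \eqref{eq:mass_perservation}), I obtain a bound independent of $T$ and $\eps$, and then send $T \to \infty$.

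\textbf{Main obstacle.} The four computations themselves are routine; the only real subtleties are making sure that the test functions $\tfrac{1}{n_\eps}$ and $\ln n_\eps$ are admissible (which follows from the classical positivity $n_\eps > 0$ and the spatial regularity of the approximate solutions recalled above) and controlling $\int_\Omega \ln n_\eps(\cdot, T)$ by a constant independent of $T$ and $\eps$; the pointwise inequality $\ln x \leq x$ combined with mass conservation handles this cleanly. Since the statement is taken essentially verbatim from \cite[Lemma~3.1]{MyExistence}, I would primarily cite that source and only reproduce the short computation leading to \eqref{eq:weak_grad_n_smallness} in detail, as it is the one that will be repeatedly invoked in \Cref{section:application}.
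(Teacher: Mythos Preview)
Your proposal is correct and follows essentially the same approach as the paper: testing the first equation in \eqref{approx_system} with $1$ and with $1/n_\eps$, and testing the second equation with powers of $c_\eps$. The only cosmetic difference is that the paper obtains the case $p=\infty$ in \eqref{eq:c_monoticity} by letting $p\to\infty$ in the finite-$p$ estimate rather than invoking the maximum principle, but both routes are equally valid.
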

\begin{proof}
	Similar to the methods seen in \cite[Lemma 3.1]{MyExistence} and \cite[Lemma 2.3]{WinklerStokesCase}, these inequalities follow immediately from testing the first equation in (\ref{approx_system}) with $1$ and $1/n_\eps$ and from testing the second equation in (\ref{approx_system}) with $c_\eps^p$ for $p \in [1,\infty)$. The case $p = \infty$ in (\ref{eq:c_monoticity}) then follows by taking the limit $p \rightarrow \infty$.
\end{proof}
\noindent As (in a sense) weaker versions of the functional inequalities (\ref{eq:func_ineq1}) and (\ref{eq:func_ineq2}) already played a significant role in deriving some of the a priori estimates used for the existence theory in \cite{MyExistence}, we will now use our improved inequalities to derive similar but stronger versions of some of the integrability properties already used in said existence theory. Namely, we manage to extend some local time integrability properties used in \cite{MyExistence} to global time integrability properties due to the elimination of a problematic additive mass term in both of the functional inequalities from \cite[Lemma 3.2]{MyExistence} at the cost of $\beta_0$ becoming potentially very small. This makes the results much more useful for long time behavior considerations as these types of integrability properties are in a sense already a weak indication for the stabilization of our solutions as $t \rightarrow \infty$.
\\[0.5em]
Our first target for this will be a straightforward improvement of \cite[Lemma 3.3]{MyExistence} by using (\ref{eq:func_ineq2}):
\begin{corollary}	\label{lemma:nlnn_integrabilty}
	There exists $C > 0$ such that
	\[
		\int_0^\infty\int_\Omega n_\eps \ln\left( \frac{n_\eps}{\;\overline{n_0}\;} \right) \leq C
	\]
	for all $\eps \in (0,1)$ with $\overline{n_0} \defs \frac{1}{|\Omega|}\int_\Omega n_0$.
\end{corollary}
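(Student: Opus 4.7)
The plan is to apply the second functional inequality from \Cref{theorem:func_ineq} pointwise in time to the strictly positive function $n_\eps(\cdot, t)$ and then integrate the resulting inequality over $(0,\infty)$, using the gradient bound (\ref{eq:weak_grad_n_smallness}) from \Cref{lemma:basic_props} to control the right-hand side. Since $\Omega$ is a bounded convex domain with a smooth boundary, it is in particular finitely connected and $C^2$, so \Cref{theorem:func_ineq} applies and yields the relevant constant $\beta_0 > 0$.

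First, I would fix $\eps \in (0,1)$ and $t > 0$ and note that $n_\eps(\cdot, t) \in C^2(\overline{\Omega})$ with $n_\eps(\cdot, t) > 0$, so the function $\psi = n_\eps(\cdot, t)$ satisfies the regularity hypotheses for the $C^1(\overline{\Omega})$-version of (\ref{eq:func_ineq2}) indicated in the remark following \Cref{theorem:func_ineq}, namely
\[
    \int_\Omega n_\eps(\cdot,t) \ln\left( \frac{n_\eps(\cdot,t)}{\overline{n_\eps(\cdot,t)}}\right) \;\leq\; \frac{2}{\beta_0} \left\{ \int_\Omega n_\eps(\cdot,t) \right\} \int_\Omega \frac{|\grad n_\eps(\cdot,t)|^2}{n_\eps^2(\cdot,t)}.
\]
Next, I would invoke the mass conservation identity (\ref{eq:mass_perservation}) from \Cref{lemma:basic_props}, which gives $\int_\Omega n_\eps(\cdot,t) = \int_\Omega n_0 = |\Omega|\,\overline{n_0}$, so that $\overline{n_\eps(\cdot,t)} = \overline{n_0}$ and the prefactor $\int_\Omega n_\eps(\cdot,t)$ on the right equals the fixed constant $\|n_0\|_{L^1(\Omega)}$. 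Plugging these into the inequality above leaves us with
\[
    \int_\Omega n_\eps(\cdot,t) \ln\left( \frac{n_\eps(\cdot,t)}{\overline{n_0}}\right) \;\leq\; \frac{2\,\|n_0\|_{L^1(\Omega)}}{\beta_0} \int_\Omega \frac{|\grad n_\eps(\cdot,t)|^2}{n_\eps^2(\cdot,t)}.
\]

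Finally, I would integrate this estimate in time over $(0,\infty)$ and apply (\ref{eq:weak_grad_n_smallness}), which furnishes an $\eps$-independent bound on $\int_0^\infty\!\int_\Omega |\grad n_\eps|^2/n_\eps^2$. This immediately yields the desired $\eps$-uniform bound on $\int_0^\infty\!\int_\Omega n_\eps \ln(n_\eps/\overline{n_0})$. I do not foresee a real obstacle here: the only thing to double-check is the applicability of the remark's form of (\ref{eq:func_ineq2}) to $n_\eps(\cdot,t)$, which follows from the classical regularity and strict positivity of the approximate solutions recorded in \Cref{section:application}. The genuine work has already been done in establishing \Cref{theorem:func_ineq} (eliminating the extra mass term $C\int_G \psi$ that appeared in the earlier inequality from \cite{MyExistence}); it is precisely that improvement which turns the previously only locally-in-time integrable quantity into a globally-in-time integrable one.
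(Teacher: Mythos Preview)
Your proposal is correct and follows exactly the approach of the paper's own proof, which simply cites (\ref{eq:mass_perservation}), (\ref{eq:weak_grad_n_smallness}) and the functional inequality (\ref{eq:func_ineq2}) and says the result follows directly. You have merely spelled out the details---verifying the regularity of $n_\eps(\cdot,t)$, using mass conservation to identify $\overline{n_\eps(\cdot,t)}$ with $\overline{n_0}$, and then integrating in time---which the paper leaves implicit.
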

\begin{proof}
	Combining the inequalities (\ref{eq:mass_perservation}) and (\ref{eq:weak_grad_n_smallness}) from \Cref{lemma:basic_props} with the functional inequality (\ref{eq:func_ineq2}) from \Cref{theorem:func_ineq} directly yields this.
\end{proof}

\noindent Again by using our key new functional inequalities from \Cref{theorem:func_ineq}, we can now improve the argument used in \cite[Lemma 3.4]{MyExistence} to gain the following lemma:
\begin{lemma}
	There exists $C > 0$ such that
	\begin{equation*}
	\int_0^\infty \int_{\Omega} |u_\eps|^2 \leq C,  \;\;  \int_0^\infty \int_{\Omega} |\grad u_\eps|^2 \leq C
	\end{equation*}
	for all $\eps \in (0,1)$.\label{lemma:u_props}
\end{lemma}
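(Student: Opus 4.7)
The plan is to run a standard energy estimate on the third equation of (\ref{approx_system}) and use the new functional inequality (\ref{eq:func_ineq1}) together with \Cref{lemma:nlnn_integrabilty} to control the only problematic term, namely the buoyancy coupling $\int_\Omega n_\eps \grad\phi \cdot u_\eps$. Testing the Navier--Stokes equation with $u_\eps$ and exploiting $\div u_\eps = 0$ and $u_\eps|_{\partial\Omega}=0$ causes both the pressure term and the convective term $\int_\Omega ((u_\eps\cdot\grad)u_\eps)\cdot u_\eps$ to vanish, leaving
\[
\tfrac{1}{2}\tfrac{d}{dt}\int_\Omega |u_\eps|^2 + \int_\Omega |\grad u_\eps|^2 = \int_\Omega n_\eps \grad\phi \cdot u_\eps.
\]

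To bound the right-hand side I would apply (\ref{eq:func_ineq1}) with $\psi \defs n_\eps$ and $\varphi \defs \grad\phi\cdot u_\eps$, the latter being an admissible $W^{1,2}(\Omega)$-function because $\phi\in W^{2,\infty}(\Omega)$ and $u_\eps \in (W^{1,2}_0(\Omega))^2$. The key algebraic point is that, thanks to the mass-preservation identity (\ref{eq:mass_perservation}) and the fact that integration by parts yields $\int_\Omega \grad\phi\cdot u_\eps = 0$, the left-hand side of (\ref{eq:func_ineq1}) is precisely $\int_\Omega n_\eps\grad\phi\cdot u_\eps$. Since $|\grad(\grad\phi\cdot u_\eps)|^2 \leq C(\|\phi\|_{W^{2,\infty}})(|u_\eps|^2 + |\grad u_\eps|^2)$, the Poincaré inequality for $u_\eps \in W^{1,2}_0(\Omega)$ turns the gradient term in (\ref{eq:func_ineq1}) into a multiple of $\int_\Omega |\grad u_\eps|^2$ only. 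Choosing the free parameter $a > 0$ small enough to make that multiple equal to $\frac{1}{2}$ yields
\[
\int_\Omega n_\eps \grad\phi\cdot u_\eps \leq \frac{1}{a}\int_\Omega n_\eps\ln\!\left(\frac{n_\eps}{\overline{n_0}}\right) + \frac{1}{2}\int_\Omega |\grad u_\eps|^2,
\]
where the coefficient of the Dirichlet energy is determined purely by $|\Omega|$, $\overline{n_0}$, $\|\phi\|_{W^{2,\infty}}$, $\beta_0$ and the Poincaré constant, none of which depend on $\eps$.

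Plugging this into the energy identity, absorbing the half gradient term on the left, and integrating from $0$ to $T$ gives
\[
\tfrac{1}{2}\int_\Omega |u_\eps(\cdot,T)|^2 + \tfrac{1}{2}\int_0^T \int_\Omega |\grad u_\eps|^2 \leq \tfrac{1}{2}\int_\Omega |u_0|^2 + \frac{1}{a}\int_0^T\int_\Omega n_\eps\ln\!\left(\frac{n_\eps}{\overline{n_0}}\right),
\]
and \Cref{lemma:nlnn_integrabilty} bounds the last integral uniformly in $T$ and $\eps$. Letting $T \to \infty$ yields the claimed bound on $\int_0^\infty\int_\Omega |\grad u_\eps|^2$, and a final application of the Poincaré inequality transfers this to $\int_0^\infty\int_\Omega |u_\eps|^2$.

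The main obstacle, and the reason this argument was not available in the earlier existence paper, is precisely the coupling term $\int_\Omega n_\eps\grad\phi\cdot u_\eps$: the old form of the functional inequality contained an additive mass term $\frac{C}{a}\int_\Omega \psi$ which, after time integration, would only produce a linear-in-$T$ bound and would be useless for the present global-in-time conclusion. Removing that term via (\ref{eq:func_ineq1}) is what unlocks the absorption procedure; everything else is a standard Navier--Stokes energy manipulation, and the smallness of $\beta_0$ costs us nothing beyond a worse quantitative value of the final constant.
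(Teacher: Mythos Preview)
Your proposal is correct and follows essentially the same route as the paper: test the Navier--Stokes equation with $u_\eps$, rewrite the buoyancy term via $\int_\Omega \overline{n_0}\,\grad\phi\cdot u_\eps=0$, apply (\ref{eq:func_ineq1}) with $\varphi=\grad\phi\cdot u_\eps$ and $\psi=n_\eps$, control $\int_\Omega|\grad(\grad\phi\cdot u_\eps)|^2$ by $\int_\Omega|\grad u_\eps|^2$ via Poincar\'e, choose $a$ to absorb, and invoke \Cref{lemma:nlnn_integrabilty}. Your explicit justification that $\int_\Omega\grad\phi\cdot u_\eps=0$ (from $\div u_\eps=0$ and $u_\eps|_{\partial\Omega}=0$) is a detail the paper leaves implicit, and your closing paragraph on why the removal of the mass term is decisive matches the paper's motivation exactly.
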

\begin{proof}
	We first test the third equation in (\ref{approx_system}) with $u_\eps$ to gain that
	\begin{align*}
	\frac{1}{2} \frac{\d}{\d t}\int_{\Omega} |u_\eps|^2 =& -\int_{\Omega} |\grad u_\eps|^2 + \int_\Omega n_\eps \grad \phi \cdot u_\eps \\
	=& -\int_{\Omega} |\grad u_\eps|^2 + \int_\Omega (n_\eps - \overline{n_0}) (\grad \phi \cdot u_\eps)
	\end{align*}
	with $\overline{n_0} \defs \frac{1}{|\Omega|}\int_\Omega n_0$, which we can then improve via our functional inequality (\ref{eq:func_ineq1}) from \Cref{theorem:func_ineq} to
	\begin{align*}
	\frac{1}{2} \frac{\d}{\d t}\int_{\Omega} |u_\eps|^2
	\leq& -\int_{\Omega} |\grad u_\eps|^2 + \frac{1}{a}\int_{\Omega} n_\eps \ln\left( \frac{n_\eps}{ \overline{n_0} }\right) + \frac{a}{4\beta_0} \left\{ \int_\Omega n_0 \right\}\int_{\Omega}|\grad (\grad \phi \cdot u_\eps)|^2 \numberthis \label{eq:u_weak_1}
	\end{align*}
	for any $a > 0$ and all $t > 0$ as well as $\eps \in (0,1)$. We now further note that
	\begin{align*}
	\int_{\Omega}|\grad (\grad \phi \cdot u_\eps)|^2  &\leq  2\int_{\Omega} |\grad \phi|^2 |\grad u_\eps|^2 + 2\int_{\Omega} |D^2 \phi|^2 |u_\eps|^2 \\
	&\leq 2\|\grad \phi\|_\L{\infty}^2\int_{\Omega} |\grad u_\eps|^2 + 2\|D^2 \phi\|_\L{\infty}^2 \int_{\Omega} |u_\eps|^2 
	\leq K_1 \int_{\Omega}|\grad u_\eps|^2 
	\end{align*}
	with $K_1 \defs 2\|\grad \phi\|_\L{\infty}^2 + 2\|D^2 \phi\|_\L{\infty}^2 C_\text{p}^2$ for all $t > 0$ and $\eps \in (0,1)$ due to the Poincaré inequality. Here, $D^2  \phi$ is the Hessian of $\phi$ and $C_\text{p}$ is the Poincaré constant for $\Omega$. If we now choose \[
	a \defs \frac{2\beta_0}{K_1} \left\{ \int_\Omega n_0 \right\}^{-1}
	\] in the inequality (\ref{eq:u_weak_1}), we gain that
	\[
	\frac{1}{2} \frac{\d}{\d t}\int_{\Omega}  |u_\eps|^2 \leq -\frac{1}{2}\int_{\Omega} |\grad u_\eps|^2 + \frac{K_1}{2\beta_0}\left\{ \int_\Omega n_0 \right\} \int_{\Omega} n_\eps \ln\left( \frac{n_\eps}{ \overline{n_0} }\right) \;\;\;\; \text{ for all } t > 0 \text{ and } \eps \in (0,1). 
	\]
	After time integration and some rearranging, we then further see that
	\[
	\int_0^\infty \int_{\Omega} |\grad u_\eps|^2  \leq \int_{\Omega}  |u_0|^2
	+ \frac{K_1}{\beta_0}\left\{ \int_\Omega n_0 \right\} \int_0^\infty \int_{\Omega} n_\eps \ln\left( \frac{n_\eps}{ \overline{n_0} }\right) \;\;\;\; \text{ for all } \eps \in (0,1).
	\]
	By the integrability property laid out in  \Cref{lemma:nlnn_integrabilty}, there then moreover exists a constant $K_2 > 0$ such that
	\[
	\int_0^\infty \int_{\Omega} |\grad u_\eps|^2 \leq  \int_{\Omega} |u_0|^2 + \frac{K_1K_2}{\beta_0}\left\{ \int_\Omega n_0 \right\} \;\;\;\; \text{ for all } \eps \in (0,1).
	\]
	This together with one last application of the Poincaré inequality completes the proof.
\end{proof}

\subsection{Eventual smallness of the family $(c_\eps)_{\eps \in (0,1)}$ in $L^p(\Omega)$ for all $p \in [1,\infty)$}

As the second equation in (\ref{approx_system}) is in many ways the easiest to handle, it is not surprising that the first fairly strong result of this section is in fact concerned with the family $(c_\eps)_{\eps \in (0,1)}$. Namely, we will now prove that the $L^p(\Omega)$-norms of said family are not only monotonically decreasing for $p\in[1,\infty)$ as seen in \Cref{lemma:basic_props}, but that they in fact tend to zero for $t \rightarrow \infty$ in an $\eps$-independent fashion. Similar to our prior results in \Cref{lemma:nlnn_integrabilty} and \Cref{lemma:u_props}, this is again heavily based on our new functional inequalities from \Cref{theorem:func_ineq}.  

\begin{lemma}
	\label{lemma:c_stabilization}
	For each $p\in [1,\infty)$ and $\delta > 0$, there exists a time $t_0 = t_0(\delta, p) > 0$ such that
	\[
		\|c_\eps(\cdot, t)\|_\L{p} \leq \delta
	\] 
	for all $t \geq t_0$ and $\eps \in (0,1)$.
\end{lemma}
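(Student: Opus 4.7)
The plan is to exploit the monotonicity of $\|c_\eps(\cdot, \cdot)\|_{L^p(\Omega)}$ from \eqref{eq:c_monoticity} together with interpolation against the uniform bound $\|c_\eps(\cdot, t)\|_{L^\infty(\Omega)} \leq \|c_0\|_{L^\infty(\Omega)}$ in order to reduce the claim to the case $p=1$. Indeed, since $\|c_\eps\|_{L^p(\Omega)} \leq \|c_\eps\|_{L^\infty(\Omega)}^{(p-1)/p}\|c_\eps\|_{L^1(\Omega)}^{1/p}$ for $p \in [1,\infty)$, it suffices to show that $\|c_\eps(\cdot, t)\|_{L^1(\Omega)} \to 0$ uniformly in $\eps \in (0,1)$ as $t\to\infty$. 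Testing the second equation in \eqref{approx_system} with the constant function $1$—the diffusion and advection terms drop out by the Neumann condition, $\div u_\eps = 0$ and $u_\eps|_{\partial\Omega} = 0$—gives $\frac{\d}{\d t}\int_\Omega c_\eps = -\int_\Omega n_\eps f(c_\eps)$, which reaffirms monotonicity of $\|c_\eps\|_{L^1(\Omega)}$ and yields the global bound $\int_0^\infty \int_\Omega n_\eps f(c_\eps) \leq \int_\Omega c_0$.

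I would then proceed by contradiction: if uniform $L^1$-convergence failed, there would exist $\delta_0 > 0$, a subsequence $(\eps_k) \subseteq (0,1)$ and times $t_k \to \infty$ with $\|c_{\eps_k}(\cdot, t_k)\|_{L^1(\Omega)} \geq \delta_0$, whence monotonicity forces $\overline{c_{\eps_k}}(s) \geq \delta_0/|\Omega|$ throughout $[0,t_k]$. For a small threshold $\eta > 0$ (to be fixed at the end in terms of $\delta_0$ and the data) I would introduce the "good" time set $G_k \subseteq [0,t_k]$ on which both $\int_\Omega |\grad c_{\eps_k}|^2(\cdot,s) \leq \eta$ and $\int_\Omega n_{\eps_k}\ln(n_{\eps_k}/\overline{n_0})(\cdot,s) \leq \eta$; combining \eqref{eq:grad_c_bound} with \Cref{lemma:nlnn_integrabilty}, Chebyshev's inequality forces $|[0,t_k]\setminus G_k| \leq C_0/\eta$ uniformly in $\eps$, so $|G_k| \geq t_k - C_0/\eta$. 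On any $s \in G_k$ the Poincar\'e inequality places $c_{\eps_k}(\cdot,s)$ within $L^2$-distance of order $\sqrt{\eta}$ from its mean, so by Chebyshev the set $\Omega_s \defs \{c_{\eps_k}(\cdot,s) \geq \overline{c_{\eps_k}}(s)/2\}$ differs from $\Omega$ by at most $C_1\eta/\delta_0^2$ in measure; on $\Omega_s$ one has $c_{\eps_k} \in [\delta_0/(2|\Omega|), \|c_0\|_{L^\infty(\Omega)}]$, and continuity and positivity of $f$ on this compact interval yield a uniform positive lower bound $f(c_{\eps_k}) \geq h_0(\delta_0) > 0$ there.

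To secure sufficient $n_{\eps_k}$-mass on $\Omega_s$ I would combine the small entropy with the standard truncation estimate $\int_A n_{\eps_k} \leq \alpha\overline{n_0}|A| + \frac{1}{\ln\alpha}\bigl(\int_\Omega n_{\eps_k}\ln(n_{\eps_k}/\overline{n_0}) + |\Omega|\overline{n_0}/e\bigr)$ (valid for any $\alpha > 1$, via splitting at $\{n_{\eps_k} \geq \alpha\overline{n_0}\}$ and $x\ln x \geq -1/e$) applied with $A = \Omega\setminus\Omega_s$: fixing $\alpha$ large first and then $\eta$ small enough produces $\int_{\Omega_s} n_{\eps_k}(\cdot,s) \geq \tfrac{1}{2}\int_\Omega n_0$ throughout $G_k$. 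Consequently $\int_\Omega n_{\eps_k} f(c_{\eps_k})(\cdot, s) \geq \tfrac{h_0(\delta_0)}{2}\int_\Omega n_0$ for every $s \in G_k$, and integration yields $\int_0^{t_k}\int_\Omega n_{\eps_k} f(c_{\eps_k}) \geq \tfrac{h_0(\delta_0)}{2}\bigl(\int_\Omega n_0\bigr)(t_k - C_0/\eta) \to \infty$, contradicting the global bound above. The main obstacle, and precisely where \Cref{theorem:func_ineq} enters (through \Cref{lemma:nlnn_integrabilty}), is ruling out concentration of $n_{\eps_k}$ on the possibly small set $\Omega\setminus\Omega_s$ where $c_{\eps_k}$ does not inherit a uniform positive lower bound from its mean; any weaker substitute than the time-integrated entropy bound would be insufficient to quantitatively prevent such mass concentration and would break the argument.
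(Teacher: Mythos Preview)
Your proof is correct and takes a genuinely different route from the paper's. Both arguments start from the same three ingredients---the integrability $\int_0^\infty\int_\Omega n_\eps f(c_\eps)\le\int_\Omega c_0$, the gradient bound \eqref{eq:grad_c_bound}, and the entropy bound of \Cref{lemma:nlnn_integrabilty}---and both reduce to $p=1$ by interpolation against $\|c_0\|_{L^\infty(\Omega)}$. From there, however, the paper applies the functional inequality \eqref{eq:func_ineq1} \emph{directly} with $\varphi=f(c_\eps)$ and $\psi=n_\eps$ to obtain the uniform integrability $\int_0^\infty\int_\Omega f(c_\eps)\le K_2$; a time-averaging argument then produces, for each $\eps$, a time $t_\eps<t_0$ at which $\int_\Omega f(c_\eps)$ is small, and a splitting of $\int_\Omega c_\eps$ over the level sets $\{c_\eps\le\xi\}$ and $\{c_\eps>\xi\}$ together with monotonicity finishes the job. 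Your argument instead bypasses \eqref{eq:func_ineq1} altogether: in the contradiction scheme the Poincar\'e inequality and Chebyshev localize $c_{\eps_k}$ near its (uniformly positive) mean on a large set of times, while the truncation estimate fed by the small entropy prevents $n_{\eps_k}$ from concentrating on the small exceptional spatial set where $c_{\eps_k}$ might fall below $\overline{c_{\eps_k}}/2$. The paper's route is shorter and constructive, yielding an explicit $t_0(\delta,p)$; yours is more elementary in that it relies only on \eqref{eq:func_ineq2} (through \Cref{lemma:nlnn_integrabilty}) and standard measure-theoretic tools, and it makes the mechanism transparent---entropy control is precisely what rules out the mass concentration of $n_\eps$ that would otherwise let $n_\eps f(c_\eps)$ stay globally integrable while $c_\eps$ refuses to decay.
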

\begin{proof}	
	As our first step, we integrate the second equation in (\ref{approx_system}) to gain that
	\[
		\frac{\d}{\d t} \int_\Omega c_\eps = -\int_\Omega n_\eps f(c_\eps) \;\;\;\; \text{ for all } t > 0 \text{ and } \eps \in (0,1),
	\]	
	which implies that
	\begin{equation}
		\int_0^\infty \int_\Omega n_\eps f(c_\eps) \leq \int_\Omega c_0 \;\;\;\;\;\; \text{ for all } \eps \in (0,1)	
		\label{eq:nfc_integrability}
	\end{equation}
	by time integration.
	We now rewrite $\int_\Omega f(c_\eps)$ as follows:
	\[
	   \int_\Omega f(c_\eps) = \frac{1}{\;\overline{n_0}\;} \left[
	   \int_\Omega (\overline{n_0} - n_\eps) f(c_\eps) + \int_\Omega n_\eps f(c_\eps) \right]	\;\;\;\; \text{ for all } t > 0 \text{ and } \eps \in (0,1) \text{ with } \overline{n_0} \defs \frac{1}{|\Omega|}\int_\Omega n_0.				
	\]
	This then allows us to apply functional inequality (\ref{eq:func_ineq1}) from \Cref{theorem:func_ineq} (setting $a = 2$) to further see that
	\begin{align*}
		\int_\Omega f(c_\eps) &\leq \frac{1}{\;\overline{n_0}\;} \left[ \frac{1}{2} \int_\Omega n_\eps \ln\left( \frac{n_\eps}{\;\overline{n_0}\;} \right) + \frac{1}{2\beta_0} \left\{ \int_\Omega n_0 \right\} \int_\Omega |\grad f(c_\eps)|^2 + \int_\Omega n_\eps f(c_\eps) \right] \\
		&\leq \frac{1}{\;\overline{n_0}\;} \left[ \frac{1}{2} \int_\Omega n_\eps \ln\left( \frac{n_\eps}{\;\overline{n_0}\;} \right) + \frac{K_1^2}{2\beta_0} \left\{ \int_\Omega n_0 \right\} \int_\Omega |\grad c_\eps|^2 + \int_\Omega n_\eps f(c_\eps) \right]	\;\;\;\; \text{ for all } \eps \in (0,1)	
	\end{align*}
	with $\beta_0$ as in \Cref{theorem:func_ineq} and $K_1 \defs \|f'\|_{L^\infty([0, \|c_0\|_\L{\infty}])}$. Considering the integrability properties in \Cref{lemma:nlnn_integrabilty}, \Cref{lemma:basic_props} and (\ref{eq:nfc_integrability}), there must therefore exist a constant $K_2 > 0$ such that
	\begin{equation}
		\int_0^\infty \int_\Omega f(c_\eps) \leq K_2 \;\;\;\; \text{ for all } \eps \in (0,1).
		\label{eq:fc_integrabilty}
	\end{equation}
	We now fix $\delta > 0$ and then let $\xi \defs \frac{\delta}{2|\Omega|}$. Because $f$ is positive outside of zero and continuous, there must exist a constant $K_3 > 0$ such that
	\[
		f(y) \geq K_3 \;\;\;\; \text{ for all } y \in [\xi, \|c_0\|_\L{\infty}].
	\]
	Because (\ref{eq:fc_integrabilty}) implies that
	\[
		\frac{1}{t_0}\int_0^{t_0} \int_\Omega f(c_\eps) \leq  \frac{K_3}{\|c_0\|_\L{\infty}}\frac{\delta}{2} \;\;\;\; \text{ for all } \eps \in (0,1)
	\] with \[
		t_0 \defs \frac{K_2 \|c_0\|_\L{\infty}}{K_3} \frac{2}{\delta} > 0,
	\]
	we can, for each $\eps \in (0,1)$, find at least one $t_\eps \in (0,t_0)$ such that
	\[
		\int_\Omega f(c_\eps(\cdot, t_\eps)) \leq \frac{K_3}{\|c_0\|_\L{\infty}}\frac{\delta}{2}.
	\]
	Using this, we then gain that
	\begin{align*}
		\int_\Omega c_\eps(\cdot, t_\eps) &= \int_{\{ c_\eps(\cdot, t_\eps) \leq \xi \}} c_\eps(\cdot, t_\eps) + \int_{\{ c_\eps(\cdot, t_\eps) > \xi \}} c_\eps(\cdot, t_\eps) \\
		&\leq |\Omega|\xi + \frac{\|c_0\|_\L{\infty}}{K_3} \int_\Omega f(c_\eps(\cdot, t_\eps)) \\
		&\leq \frac{\delta}{2} + \frac{\delta}{2} = \delta \;\;\;\; \text{ for all } \eps \in (0,1)
	\end{align*}
	and therefore 
	\[
		\int_\Omega c_\eps(\cdot, t) \leq \delta \;\;\;\; \text{ for all } \eps \in (0,1) \text{ and } t \geq t_0
	\]
	because of the monotonicity properties for the family $(c_\eps)_{\eps \in (0,1)}$ seen in \Cref{lemma:basic_props} and the fact that $t_\eps < t_0$ for all $\eps \in (0,1)$.
	This is exactly our desired result for $p = 1$ and, because \Cref{lemma:basic_props} further gives us a global uniform $L^\infty$ bound for the family $(c_\eps)_{\eps \in (0,1)}$, our desired result follows for $p > 1$ by interpolation.
\end{proof} \noindent
By combining the above lemma with (\ref{eq:grad_c_bound}) from \Cref{lemma:basic_props}, we then immediately gain an  important corollary about the gradients of the family $(c_\eps)_{\eps \in (0,1)}$.
\begin{corollary}
	For each $\delta > 0$, there exists a time $t_0 = t_0(\delta) > 0$ such that
	\[
		\int_{t_0}^\infty \int_\Omega |\grad c_\eps|^2 \leq \delta
	\]
	for all $\eps \in (0,1)$.
	\label{lemma:grad_c_long_time_small}
\end{corollary}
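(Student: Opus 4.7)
The plan is to combine the pointwise-in-time smallness of $c_\eps$ in $L^2(\Omega)$ supplied by \Cref{lemma:c_stabilization} with the decay bound for $\grad c_\eps$ from \Cref{lemma:basic_props}. Concretely, estimate (\ref{eq:grad_c_bound}) already gives us
\[
\int_t^\infty\int_\Omega |\grad c_\eps|^2 \;\leq\; \tfrac{1}{2}\int_\Omega c_\eps^2(\cdot,t)
\]
for every $t > 0$ and every $\eps \in (0,1)$, so the entire task reduces to producing a single time $t_0 > 0$, independent of $\eps$, at which the right-hand side is controlled by $\delta$.

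The key step is therefore to invoke \Cref{lemma:c_stabilization} with exponent $p = 2$ and tolerance $\sqrt{2\delta}$: this yields some $t_0 = t_0(\delta) > 0$ such that $\|c_\eps(\cdot,t_0)\|_{L^2(\Omega)}^2 \leq 2\delta$ for every $\eps \in (0,1)$. Plugging this into (\ref{eq:grad_c_bound}) at $t = t_0$ immediately gives
\[
\int_{t_0}^\infty \int_\Omega |\grad c_\eps|^2 \;\leq\; \tfrac{1}{2}\,\|c_\eps(\cdot,t_0)\|_{L^2(\Omega)}^2 \;\leq\; \delta
\]
uniformly in $\eps$, which is exactly the claim.

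There is essentially no obstacle here: both ingredients are already in place, and the only minor bookkeeping is choosing the correct tolerance $\sqrt{2\delta}$ when applying \Cref{lemma:c_stabilization} so that the factor $\tfrac{1}{2}$ in (\ref{eq:grad_c_bound}) absorbs into the final constant. In that sense this corollary is a direct structural consequence of the preceding lemma, and its brevity reflects that the heavy lifting (use of the new functional inequalities to obtain eventual $L^p$ smallness of $c_\eps$) was already carried out in \Cref{lemma:c_stabilization}.
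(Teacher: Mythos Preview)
Your proof is correct and matches the paper's own argument essentially verbatim: invoke \Cref{lemma:c_stabilization} with $p=2$ to make $\int_\Omega c_\eps^2(\cdot,t_0)\leq 2\delta$, then apply (\ref{eq:grad_c_bound}).
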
  
\begin{proof}
	Because of \Cref{lemma:c_stabilization}, there exists $t_0 > 0$ such that 
	\[
		\int_\Omega c_\eps^2(\cdot, t_0) \leq 2\delta \;\;\;\; \text{ for all } \eps \in (0,1).
	\]
	The inequality (\ref{eq:grad_c_bound}) from \Cref{lemma:basic_props} then immediately implies our desired result.
\end{proof}

\subsection{Eventual smallness of a key functional and its associated norms}
\label{section:application_final_estimates}
Having now leveraged the functional inequalities in \Cref{theorem:func_ineq} to overcome some critical gaps in a priori information, our next step will be to further improve upon our (sometimes fairly weak) stabilization results in the previous sections. 
\\[0.5em]
As a first step toward this goal, we will show that, if the functional $\mathcal{F}_\eps$ seen in (\ref{eq:f_eps_functional}) is small at some time $t_0 > 0$, it in fact stays at least somewhat small from there on out. While the functional itself is already composed of some key integrals, 
the argument for this also gives us that time-space integrals over some higher-order derivatives of our solution components become small when considered only from $t_0$ onward. This approach is inspired by similar methods seen in \cite{WinklerSmallMass}.
\begin{lemma}\label{lemma:small_things_stay_small}
	There exist constants $\delta_0 > 0$, $C \geq 1$ such that the following holds for all $\delta \in (0,\delta_0)$: \\ 
	Let $\eps \in (0,1)$. If there exists $t_0 > 0$ such that the functional
	\begin{equation}
	\mathcal{F}_\eps(t) \defs \int_{\Omega} n_\eps \ln\left( \frac{n_\eps}{\overline{n_0}}\right) + \frac{1}{2}\int_{\Omega} |\grad c_\eps|^2 + \frac{1}{2C }\int_{\Omega} |u_\eps|^2 \;\;\;\;\text{ for all } t > 0, \text{ where }\overline{n_0} \defs \frac{1}{|\Omega|}\int_\Omega n_0, \label{eq:f_eps_functional}
	\end{equation}
	has the property
	\begin{equation}
		\mathcal{F_\eps}(t_0) \leq \frac{\delta}{8C} \label{eq:initially_small}
	\end{equation}
	and further the inequality 
	\begin{equation}
		\int_{t_0}^\infty\int_{\Omega}|\grad c_\eps|^2 \leq  \frac{\delta}{8C^2} \label{eq:grad_c_int_small}
	\end{equation}
	holds, then
	\begin{equation}
		\mathcal{F_\eps}(t) \leq \delta  \label{eq:things_stay_small_res_1}
	\end{equation}
	for all $t \geq t_0$ and
	\begin{equation}
		\int_{t_0}^\infty \int_{\Omega} \frac{|\grad n_\eps|^2}{n_\eps} \leq \delta, \;\;\;\;  \int_{t_0}^\infty \int_{\Omega} |\laplace c_\eps|^2 \leq \delta, \;\;\;\;  \int_{t_0}^\infty \int_{\Omega} |\grad u_\eps|^2 \leq \delta. \label{eq:things_stay_small_res_2}
	\end{equation}
\end{lemma}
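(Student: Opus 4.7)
The plan is to derive a differential inequality of the shape
\[
\frac{\d}{\d t}\mathcal{F}_\eps + \tfrac{1}{4}\int_\Omega \frac{|\grad n_\eps|^2}{n_\eps} + \tfrac{1}{4}\int_\Omega |\laplace c_\eps|^2 + \tfrac{1}{4C}\int_\Omega |\grad u_\eps|^2 \leq K \int_\Omega |\grad c_\eps|^2
\]
valid as long as $\mathcal{F}_\eps \leq \delta$ for sufficiently small $\delta$, and then to run a continuity argument against the smallness hypotheses (\ref{eq:initially_small})--(\ref{eq:grad_c_int_small}). To reach this inequality, I would test the first equation in (\ref{approx_system}) with $\ln(n_\eps/\overline{n_0})$, the second with $-\laplace c_\eps$, and the third with $u_\eps/(2C)$. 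The convective terms for $n_\eps$ and $u_\eps$ vanish by $\div u_\eps = 0$, while $\int(u_\eps\cdot\grad c_\eps)\laplace c_\eps$ is absorbed by Young and the 2D Gagliardo--Nirenberg estimate $\|u_\eps\|_\L 4^2 \leq C_1 \|\grad u_\eps\|_\L 2 \|u_\eps\|_\L 2 + C_1\|u_\eps\|_\L 2^2$ together with $\|u_\eps\|_\L 2^2 \leq 2C\mathcal{F}_\eps \leq 2C\delta$. After a Young step the sensitivity cross term is dominated by $\tfrac18\int\frac{|\grad n_\eps|^2}{n_\eps} + K_1\int n_\eps|\grad c_\eps|^2$, the reaction term by $\tfrac18\int|\laplace c_\eps|^2 + K_2\int n_\eps^2$ (since $|f(c_\eps)| \leq \|f'\|_{L^\infty([0,\|c_0\|_\L\infty])}\|c_\eps\|_\L\infty$), and the buoyancy term is handled exactly as in the proof of \Cref{lemma:u_props}.

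The crux is absorbing $\int n_\eps|\grad c_\eps|^2$ and $\int n_\eps^2$ into the dissipation, and this is precisely where the new functional inequalities from \Cref{theorem:func_ineq} come in. For the former, inequality (\ref{eq:func_ineq1}) applied with $\psi = n_\eps$ and $\varphi = |\grad c_\eps|^2$, combined with $|\grad|\grad c_\eps|^2|^2 \leq 4|D^2 c_\eps|^2|\grad c_\eps|^2$ and the convex-domain estimate $\int|D^2 c_\eps|^2 \leq \int|\laplace c_\eps|^2$ (valid under Neumann data on smooth convex $\Omega$), yields, for any $a > 0$ and $M \defs \int_\Omega n_0$,
\[
\int n_\eps|\grad c_\eps|^2 \leq \overline{n_0}\int|\grad c_\eps|^2 + \tfrac{1}{a}\int n_\eps\ln\Big(\frac{n_\eps}{\overline{n_0}}\Big) + \tfrac{aM}{\beta_0}\|\grad c_\eps\|_\L\infty^2\int|\laplace c_\eps|^2;
\]
a further Gagliardo--Nirenberg step bounding $\|\grad c_\eps\|_\L\infty^2$ in terms of $\|\laplace c_\eps\|_\L 2\|\grad c_\eps\|_\L 2$ then produces a prefactor proportional to $\mathcal{F}_\eps^{1/2} \leq \delta^{1/2}$ which is absorbable into $\int|\laplace c_\eps|^2$. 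For $\int n_\eps^2$, the 2D Gagliardo--Nirenberg inequality applied to $\sqrt{n_\eps}$ gives $\int n_\eps^2 \leq \tfrac{C_3 M}{4}\int\frac{|\grad n_\eps|^2}{n_\eps} + C_3 M^2$, the constant contribution being rendered small either by enlarging $t_0$ so that $\|c_\eps\|_\L\infty^2$ is uniformly small via \Cref{lemma:c_stabilization}, or by a further GN interpolation producing an extra factor of $\int|\grad c_\eps|^2$. Choosing $a$ small enough then lets the $\int n_\eps\ln(n_\eps/\overline{n_0})$ contribution (bounded by $\mathcal{F}_\eps \leq \delta$) be controlled as well.

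With the differential inequality in hand, I conclude by defining $T \defs \sup\{t\geq t_0 : \mathcal{F}_\eps(s) \leq \delta \text{ for all } s\in[t_0,t]\}$ and fixing $C \geq 1$, $\delta_0$ so that all of the above estimates hold with the displayed constants and $K \leq C$. Integration of the differential inequality on $[t_0,T]$ together with (\ref{eq:initially_small})--(\ref{eq:grad_c_int_small}) then yields, for every $t \in [t_0,T]$,
\[
\mathcal{F}_\eps(t) + \tfrac{1}{4}\int_{t_0}^t\int_\Omega\tfrac{|\grad n_\eps|^2}{n_\eps} + \tfrac{1}{4}\int_{t_0}^t\int_\Omega|\laplace c_\eps|^2 + \tfrac{1}{4C}\int_{t_0}^t\int_\Omega|\grad u_\eps|^2 \leq \tfrac{\delta}{8C} + K\cdot\tfrac{\delta}{8C^2} \leq \tfrac{\delta}{2}.
\]
This strictly improves the defining bound for $T$, forcing $T = \infty$, which gives (\ref{eq:things_stay_small_res_1}); dropping $\mathcal{F}_\eps \geq 0$ and sending $t\to\infty$ in the displayed inequality then yields (\ref{eq:things_stay_small_res_2}) after mildly enlarging $C$. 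The main obstacle is the estimate of $\int n_\eps|\grad c_\eps|^2$: with only the older variant of (\ref{eq:func_ineq1}) carrying an additive remainder $\tfrac{C}{a}\int n_\eps$ on the right, one would pick up a contribution of the form $K\int_{t_0}^\infty\int_\Omega n_\eps = K M(t-t_0) \to \infty$, destroying the global-in-time usability; it is precisely the elimination of that remainder term in \Cref{theorem:func_ineq} that makes the entire bootstrap go through.
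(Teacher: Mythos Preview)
Your overall scaffold (test with $\ln n_\eps$, $-\laplace c_\eps$, $u_\eps$; derive a differential inequality; close by a continuity argument) matches the paper, but the way you propose to absorb the cross terms has genuine gaps. The most concrete one is the claimed two--dimensional Gagliardo--Nirenberg estimate $\|\grad c_\eps\|_{\L\infty}^2 \leq C\|\laplace c_\eps\|_{\L2}\|\grad c_\eps\|_{\L2}$: this is precisely the critical case $W^{2,2}(\Omega)\hookrightarrow W^{1,\infty}(\Omega)$ in $\R^2$, which fails, so your route through (\ref{eq:func_ineq1}) with $\varphi=|\grad c_\eps|^2$ does not close. A second problem is that applying (\ref{eq:func_ineq1}) (either here or, as you suggest, in the buoyancy term ``as in \Cref{lemma:u_props}'') leaves a contribution $\tfrac{1}{a}\int_\Omega n_\eps\ln(n_\eps/\overline{n_0})$ on the right. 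This is \emph{not} of the form $K\int_\Omega|\grad c_\eps|^2$, so after time integration it produces $\tfrac{1}{a}\int_{t_0}^{t}\mathcal{F}_\eps$, which neither (\ref{eq:initially_small}) nor (\ref{eq:grad_c_int_small}) controls; making $a$ depend on $\delta$ to fix this collides with the absorption of the companion term. Finally, by not integrating $\int_\Omega n_\eps f(c_\eps)\laplace c_\eps$ by parts you manufacture an $\int_\Omega n_\eps^2$ term whose GNI remainder $C M^2$ is an absolute constant; ``enlarging $t_0$'' is not permitted by the lemma's hypotheses, and the vague ``further GN interpolation'' does not obviously supply a factor of $\int_\Omega|\grad c_\eps|^2$.

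The paper avoids all of this by \emph{not} using \Cref{theorem:func_ineq} in this lemma at all. Instead, after integrating both $\int_\Omega(u_\eps\cdot\grad c_\eps)\laplace c_\eps$ and $\int_\Omega n_\eps f(c_\eps)\laplace c_\eps$ by parts, every dangerous term is reduced to a multiple of $\int_\Omega n_\eps|\grad c_\eps|^2$ or $\int_\Omega|\grad c_\eps|^4$. The former is handled by the elementary Sobolev--Poincar\'e inequality $\int_\Omega(n_\eps-\overline{n_0})^2 \leq C_{\text s}\big(\int_\Omega|\grad n_\eps|\big)^2 \leq C_{\text s}m_0\int_\Omega\tfrac{|\grad n_\eps|^2}{n_\eps}$ after a Young split, which again throws the remainder onto $\int_\Omega|\grad c_\eps|^4$ plus a term $K\int_\Omega|\grad c_\eps|^2$. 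The only nonlinearity left is $\int_\Omega|\grad c_\eps|^4 \leq C_{\text{gni}}\big(\int_\Omega|\grad c_\eps|^2\big)\int_\Omega|\laplace c_\eps|^2 \leq 2C_{\text{gni}}\,\mathcal{F}_\eps\int_\Omega|\laplace c_\eps|^2$, and the smallness of $\mathcal{F}_\eps$ now absorbs it cleanly. The new functional inequalities of \Cref{theorem:func_ineq} are decisive \emph{upstream}, in \Cref{lemma:nlnn_integrabilty}, \Cref{lemma:u_props} and \Cref{lemma:c_stabilization}, to supply the integrability that makes (\ref{eq:initially_small})--(\ref{eq:grad_c_int_small}) attainable; they are not the mechanism inside this particular proof.
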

\begin{proof}
	Before we present the actual core of this proof, let us first fix some necessary constants to streamline later arguments and make sure that there are no hidden interdependencies: 
	\\[0.5em]
	Let first $C_\text{p} > 0$ be the constant used in the following well-known Poincaré inequalities on $\Omega$ (cf.\ \cite[p.~290]{BrezisFAandPDE}):
	\begin{equation}
		\|\varphi\|_\L{2} \leq C_\text{p} \|\grad \varphi\|_\L{2} \;\;\;\; \text{ for all } \varphi \in C^1(\overline{\Omega}) \text{ with } \varphi = 0 \text{ on }\partial\Omega \label{eq:poincare_1}
	\end{equation}
	and
	\begin{equation}
		\|\grad \varphi\|_\L{2} \leq C_\text{p} \|\laplace \varphi\|_\L{2} \;\;\;\; \text{ for all } \varphi \in C^2(\overline{\Omega}) \text{ with } \grad \varphi \cdot \nu = 0 \text{ on } \partial \Omega
		\label{eq:poincare_2}
	\end{equation}
	Let now $C_\text{s} > 0$ be the constant in the following Sobolev-type inequality (cf.\ \cite[p.~313]{BrezisFAandPDE}):
	\begin{equation} \label{eq:sobolev}
		\int_{\Omega} |\varphi - \overline{\varphi}|^2 \leq C_\text{s} \left\{ \int_{\Omega} |\grad \varphi| \right\}^2 \;\;\;\; \text{ for all } \varphi \in C^1(\overline{\Omega}) \text{ with } \overline{\varphi} \defs \frac{1}{|\Omega|} \int_\Omega \varphi
	\end{equation}
	Finally, let $C_\text{gni} > 0$ be such that the inequality
	\begin{equation} \label{eq:gni}
		\int_\Omega |\grad  \varphi|^4 \leq C_\text{gni} \left\{ \int_{\Omega} |\grad \varphi|^2\right\} \int_{\Omega} |\laplace  \varphi|^2  \;\;\;\; \text{ for all } \varphi \in C^2(\overline{\Omega}) \text{ with } \grad \varphi \cdot \nu = 0 \text{ on } \partial\Omega,
	\end{equation}
	which can be derived from the Gagliardo--Nirenberg inequality (cf.\ \cite{GNIClassic}), elliptic regularity theory (cf.\ \cite[Theorem 19.1]{FriedmanPDE}), the Poincaré inequality (cf.\ \cite[p.~312]{BrezisFAandPDE}) and (\ref{eq:poincare_2}), holds. 
	\\[0.5em]
	Let us further fix the basic constants
	\[
		K_1 \defs S_0(\|c_0\|_\L{\infty}),\;\;\;\; K_2 \defs  \|f\|_{C^1([0, \|c_0\|_\L{\infty}])},\;\;\;\; K_3 \defs \|\grad \phi\|_\L{\infty}, \;\;\;\; K_4 \defs 2K_1^2 + 2K_2^2 + K_2,
	\]
	which only depend on the parameters of the system (\ref{problem}) and the initial data $c_0$. Then let
	\[
	C \defs \max\left(2K_3^2 C^2_\text{p} C_\text{s} m_0, K_4 \frac{m_0}{|\Omega|},  1 \right), \;\;\;\; K_5 \defs C_\text{s} m_0 K_4^2 + C  \;\;\;\;\text{ and }\;\;\;\;	\delta_0 \defs \frac{1}{4K_5 C_\text{gni}}
	\]
	with $m_0 \defs \int_\Omega n_0$.
	For the actual core of this proof, we now fix $\eps \in (0,1)$, $\delta \in (0,\delta_0)$ and $t_0 > 0$ such that (\ref{eq:initially_small}) and (\ref{eq:grad_c_int_small}) hold with $C$ and $\delta_0$ as defined above. We then test each of the first three equations in (\ref{approx_system}) with certain appropriate test functions:
	\\[0.5em]
	We test the first equation with $\ln(n_\eps)$ and use Young's inequality to see that
	\begin{align*}
	\frac{\d}{\d t}\int_{\Omega} n_\eps\ln\left(\frac{n_\eps}{\overline{n_0}}\right)
	=& - \int_{\Omega} \frac{|\grad n_\eps|^2}{n_\eps} + \int_{\Omega} \grad n_\eps \cdot S_\eps(x,n_\eps,c_\eps) \grad c_\eps \\
	\leq& -\frac{7}{8}\int_{\Omega} \frac{|\grad n_\eps|^2}{n_\eps} + 2K_1^2 \int_{\Omega} |\grad c_\eps|^2 n_\eps \numberthis \label{eq:things_stay_small_1}
	\end{align*}
	for all $t > 0$.
	We then test the second equation with $-\laplace c_\eps$ to see that
	\begin{align*}
	&\frac{1}{2}\frac{\d}{\d t}\int_{\Omega} |\grad c_\eps|^2 \\
	=&  - \int_{\Omega} |\laplace c_\eps|^2 + \int_\Omega (u_\eps \cdot \grad c_\eps) \laplace c_\eps + \int_{\Omega}n_\eps f(c_\eps) \laplace c_\eps \\
	=& - \int_{\Omega} |\laplace c_\eps|^2 - \int_\Omega (\grad u_\eps  \grad c_\eps) \cdot \grad c_\eps - \frac{1}{2}\int_\Omega u_\eps \cdot \grad |\grad c_\eps|^2 - \int_{\Omega}f(c_\eps) \grad n_\eps\cdot \grad c_\eps - \int_{\Omega}n_\eps f'(c_\eps) |\grad c_\eps|^2  \\
	\leq& - \int_{\Omega} |\laplace c_\eps|^2 + \frac{1}{4C} \int_{\Omega} |\grad u_\eps|^2 + C \int_{\Omega} |\grad c_\eps|^4 + \frac{1}{8}  
	\int_{\Omega} \frac{|\grad n_\eps|^2}{n_\eps} + (2K_2^2 + K_2)  \int_{\Omega} |\grad c_\eps|^2 n_\eps \numberthis \label{eq:things_stay_small_2}
	\end{align*}
	for all $t > 0$ by again using Young's inequality and the fact that
	\[
		\int_\Omega u_\eps \cdot \grad |\grad c_\eps|^2 = -\int_\Omega (\div u_\eps)  |\grad c_\eps|^2 = 0 
	\] because $\div u_\eps \equiv 0$. Similar to the proof of \Cref{lemma:u_props}, we test the third equation with $u_\eps$ to see that
	\begin{align*}
	\frac{1}{2}\frac{\d}{\d t}  \int_\Omega |u_\eps|^2 
	=& -\int_{\Omega} |\grad u_\eps|^2 + \int_{\Omega} (n_\eps - \overline{n_0}) (\grad \phi \cdot u_\eps) \\
	\leq& -\int_{\Omega} |\grad u_\eps|^2 +  \frac{K_3^2 C^2_\text{p}}{2} \int_{\Omega} (n_\eps - \overline{n_0})^2 +  \frac{1}{2 C^2_\text{p}} \int_\Omega |u_\eps|^2 \\
	\leq& -\frac{1}{2}\int_{\Omega} |\grad u_\eps|^2 + \frac{K_3^2 C^2_\text{p} C_\text{s}}{2} \left\{ \int_{\Omega} |\grad n_\eps| \right\}^2  \\
	\leq& -\frac{1}{2}\int_{\Omega} |\grad u_\eps|^2 + \frac{K_3^2 C^2_\text{p} C_\text{s}}{2} \left\{ \int_\Omega n_\eps \right\}  \int_{\Omega} \frac{|\grad n_\eps|^2}{n_\eps}  \\
	\leq& -\frac{1}{2}\int_{\Omega} |\grad u_\eps|^2 + \frac{C}{4} \int_{\Omega} \frac{|\grad n_\eps|^2}{n_\eps} \numberthis \label{eq:things_stay_small_3}
	\end{align*}
	for all $t > 0$ by using Young's inequality, the Hölder inequality, (\ref{eq:poincare_1}) and (\ref{eq:sobolev}).
	\\[0.5em]
	Inequality (\ref{eq:things_stay_small_1}) and (\ref{eq:things_stay_small_2}) then combine to give us
	\begin{align*}
	&\frac{\d}{\d t}\int_{\Omega} n_\eps\ln\left(\frac{n_\eps}{\overline{n_0}}\right) + \frac{1}{2}\frac{\d}{\d t}\int_{\Omega} |\grad c_\eps|^2  \\
	\leq& - \frac{3}{4} \int_{\Omega} \frac{|\grad n_\eps|^2}{n_\eps} -\int_{\Omega} |\laplace c_\eps|^2 + \frac{1}{4C} \int_{\Omega} |\grad u_\eps|^2 + C \int_{\Omega} |\grad c_\eps|^4 + K_4 \int_{\Omega} |\grad c_\eps|^2 n_\eps
	\end{align*}
	for all $t > 0$. The most critical term here is $\int_{\Omega} |\grad c_\eps|^2 n_\eps$, which we therefore further estimate in a similar fashion to the arguments seen in (\ref{eq:things_stay_small_3}) as
	\begin{align*}
	\int_{\Omega} |\grad c_\eps|^2 n_\eps &= 
	\int_{\Omega} |\grad c_\eps|^2 (n_\eps - \overline{n_0}) + \frac{m_0}{|\Omega|} \int_{\Omega} |\grad c_\eps|^2 \\
	&\leq C_\text{s} m_0 K_4 \int_\Omega |\grad c_\eps|^4 + \frac{1}{4 C_\text{s} m_0  K_4 } \int_\Omega (n_\eps - \overline{n_0})^2 + \frac{m_0}{|\Omega|} \int_{\Omega} |\grad c_\eps|^2 \\
	&\leq C_\text{s} m_0 K_4 \int_{\Omega} |\grad c_\eps|^4 + \frac{1}{4K_4} \int_{\Omega} \frac{|\grad n_\eps|^2}{n_\eps} + \frac{m_0}{|\Omega|} \int_{\Omega} |\grad c_\eps|^2 \;\;\;\; \text{ for all } t > 0,
	\end{align*}
	to gain that 
	\begin{align*}
	&\frac{\d}{\d t}\int_{\Omega} n_\eps\ln\left(\frac{n_\eps}{\overline{n_0}}\right) + \frac{1}{2}\frac{\d}{\d t}\int_{\Omega} |\grad c_\eps|^2 \\  
	\leq& - \frac{1}{2} \int_{\Omega} \frac{|\grad n_\eps|^2}{n_\eps} -\int_{\Omega} |\laplace c_\eps|^2 + \frac{1}{4C} \int_{\Omega} |\grad u_\eps|^2 + K_5 \int_{\Omega} |\grad c_\eps|^4 + \mathcal{G}_\eps(t)
	\end{align*}
	with $\mathcal{G}_\eps(t) \defs K_4\frac{m_0}{|\Omega|}\int_{\Omega} |\grad c_\eps(\cdot, t)|^2$ for all $t > 0$.
	This combined with (\ref{eq:things_stay_small_3}) then finally gives us
	\begin{align*}
	&\frac{\d}{\d t}\int_{\Omega} n_\eps\ln\left(\frac{n_\eps}{\overline{n_0}}\right) + \frac{1}{2}\frac{\d}{\d t}\int_{\Omega} |\grad c_\eps|^2 +  \frac{1}{2C}\frac{\d}{\d t}\int_{\Omega} |u_\eps|^2   \\
	\leq& - \frac{1}{4} \int_{\Omega} \frac{|\grad n_\eps|^2}{n_\eps} -\int_{\Omega} |\laplace c_\eps|^2 - \frac{1}{4C} \int_{\Omega} |\grad u_\eps|^2 + K_5 \int_{\Omega} |\grad c_\eps|^4 +  \mathcal{G}_\eps(t),
	\end{align*}
	which can be further rewritten as
	\begin{align*}
	\mathcal{F}'_\eps(t) + \frac{1}{4} \int_\Omega \frac{|\grad n_\eps|^2}{n_\eps} + \int_{\Omega} |\laplace c_\eps|^2 + \frac{1}{4C} \int_{\Omega} |\grad u_\eps|^2 \leq K_5 \int_{\Omega} |\grad c_\eps|^4 + \mathcal{G}_\eps(t)
	\end{align*}
	for all $t > 0$.
	Because we further know that
	\begin{align*}
	\int_\Omega |\grad c_\eps|^4 \leq C_\text{gni} \left\{ \int_{\Omega} |\grad c_\eps|^2\right\} \int_{\Omega} |\laplace c_\eps|^2 \leq 2C_\text{gni} \mathcal{F}_\eps(t) \int_{\Omega} |\laplace c_\eps|^2
	\end{align*}
	due to (\ref{eq:gni}) and the fact that Jensen's inequality ensures that $\int_{\Omega} n_\eps\ln(\frac{n_\eps}{\overline{n_0}}) \geq 0$, we finally gain that
	\[
	\mathcal{F}'_\eps(t) + \frac{1}{4} \int_\Omega \frac{|\grad n_\eps|^2}{n_\eps} + \left(1 - 2 K_5 C_\text{gni} \mathcal{F}_\eps(t) \right) \int_{\Omega} |\laplace c_\eps|^2 + \frac{1}{4C} \int_{\Omega} |\grad u_\eps|^2 \leq \mathcal{G}_\eps(t)
	\]
	for all $t > 0$.
	We then let 
	\[
	S_\eps \defs \left\{ \, T > t_0 \;|\; \mathcal{F}_\eps(t) \leq \delta  \text{ for all } t\in(t_0, T) \, \right\},
	\]
	which is non-empty because of the continuity of $\mathcal{F}_\eps$, (\ref{eq:initially_small}) and the fact that $C \geq 1$. For all $T \in S_\eps$ and $t \in (t_0, T)$, we know that
	\[
		\mathcal{F}'_\eps(t) + \frac{1}{4} \int_\Omega \frac{|\grad n_\eps|^2}{n_\eps} + \frac{1}{2} \int_{\Omega} |\laplace c_\eps|^2 + \frac{1}{4C} \int_{\Omega} |\grad u_\eps|^2 \leq \mathcal{G}_\eps(t),
		\numberthis \label{eq:things_stay_small_4}
	\]
	because
	\[
		\left(1 - 2 K_5 C_\text{gni} \mathcal{F}_\eps(t) \right) \geq \left(1 - 2 K_5 C_\text{gni} \delta \right) \geq \left(1 - 2 K_5 C_\text{gni} \delta_0 \right) = \frac{1}{2}.
	\]
	If we now integrate in (\ref{eq:things_stay_small_4}) from $t_0$ to $t$, we gain that
	\begin{align*}
		&\;\mathcal{F}_\eps(t) + \frac{1}{4} \int_{t_0}^t  \int_\Omega \frac{|\grad n_\eps|^2}{n_\eps} + \frac{1}{2} \int_{t_0}^t  \int_{\Omega} |\laplace c_\eps|^2 + \frac{1}{4C} \int_{t_0}^t \int_{\Omega} |\grad u_\eps|^2 \\
		\leq&\; \mathcal{F}_\eps(t_0) + \int_{t_0}^\infty \mathcal{G}_\eps(s) \d s \leq \frac{\delta}{8C} + K_4\frac{m_0}{|\Omega|} \int_{t_0}^\infty \int_\Omega |\grad c_\eps|^2 \leq \frac{\delta}{8C} + \frac{\delta}{8C} \frac{m_0 K_4}{C |\Omega|} \leq \frac{ \delta}{4C} \leq \frac{\delta}{4} \numberthis \label{eq:small_things_stay_small_final}
	\end{align*}
	for all $T \in S_\eps$ and $t \in (t_0, T)$ because of (\ref{eq:initially_small}) and (\ref{eq:grad_c_int_small}) and the fact that $C \geq 1$ and $\frac{m_0 K_4}{|\Omega| C} \leq 1$ by definition of $C$. Therefore, $S_\eps = [t_0, \infty)$ due to the continuity of $S_\eps$ and thus (\ref{eq:small_things_stay_small_final}) holds on the entire interval $[t_0, \infty)$.
	\\[0.5em]
	This then directly implies (\ref{eq:things_stay_small_res_1}) and (\ref{eq:things_stay_small_res_2}) and consequently completes the proof.
\end{proof}
\noindent We can now use the above insight in combination with the properties derived in \Cref{lemma:nlnn_integrabilty}, \Cref{lemma:u_props} and \Cref{lemma:grad_c_long_time_small} to show that the smallness conditions for the functional $\mathcal{F}_\eps$ and gradient of $c_\eps$ are in fact achievable for every $\delta > 0$ at some $\eps$-independent time $t_0$ and that therefore the functional and thus its individual components become small in a uniform fashion. As an added bonus, we naturally also gain some uniform higher order smallness information for some of the dissipative terms, which will prove useful later on. 
\begin{lemma}
	\label{lemma:things_are_small}
	For each $\delta > 0$, there exists $t_0 = t_0(\delta) > 0$ such that
	\[
		\int_\Omega n_\eps(\cdot, t) \ln\left( \frac{n_\eps(\cdot, t)}{\;\overline{n_0}\;} \right)\leq \delta,\;\;  \int_\Omega |\grad c_\eps(\cdot, t)|^2 \leq \delta,\;\;  \int_\Omega |u_\eps(\cdot, t)|^2 \leq \delta
	\]
	for all $t > t_0$, $\eps \in (0,1)$ with $\overline{n_0} \defs \frac{1}{|\Omega|}\int_\Omega n_0$ and
	\[
		\int_{t_0}^\infty \int_{\Omega} \frac{|\grad n_\eps|^2}{n_\eps} \leq \delta, \;\; \int_{t_0}^\infty \int_{\Omega} |\laplace c_\eps|^2 \leq \delta, \;\; \int_{t_0}^\infty \int_{\Omega} |\grad u_\eps|^2 \leq \delta
	\]
	for all $\eps \in (0,1)$.
\end{lemma}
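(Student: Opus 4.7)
The plan is to combine the stability mechanism of \Cref{lemma:small_things_stay_small} with the global-in-time integrability information already at our disposal so as to produce, for every prescribed $\delta > 0$, an $\eps$-independent time window $[T_1, t_0]$ inside which $\mathcal{F}_\eps$ must attain, at some moment $t_\eps$, a value small enough to trigger the stability mechanism. Once such a $t_\eps$ is found, \Cref{lemma:small_things_stay_small} then does all the remaining work automatically.

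Given $\delta > 0$, I would first fix a value $\delta' = \delta'(\delta) \in (0, \delta_0)$ small enough that any bound of the form $\mathcal{F}_\eps(t) \leq \delta'$ translates, termwise, into the three individual smallness conclusions claimed in the statement; this is possible because mass conservation together with Jensen's inequality guarantees $\int_\Omega n_\eps \ln(n_\eps/\overline{n_0}) \geq 0$, while the two other summands of $\mathcal{F}_\eps$ are manifestly nonnegative. By \Cref{lemma:grad_c_long_time_small} I would next pick $T_1 = T_1(\delta')$ with $\int_{T_1}^\infty \int_\Omega |\grad c_\eps|^2 \leq \delta'/(8C^2)$ for all $\eps \in (0,1)$, which realizes the hypothesis (\ref{eq:grad_c_int_small}) of \Cref{lemma:small_things_stay_small} at any starting time beyond $T_1$.

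The core step is to locate a time $t_\eps$ lying in a bounded interval $[T_1, T_1 + L]$ with $L$ independent of $\eps$ at which $\mathcal{F}_\eps(t_\eps) \leq \delta'/(8C)$. To this end I would first establish an $\eps$-uniform total integrability estimate
\[
\int_0^\infty \mathcal{F}_\eps(t) \, \d t \leq M
\]
by combining \Cref{lemma:nlnn_integrabilty} (which handles the $n_\eps \ln(n_\eps/\overline{n_0})$ contribution), \Cref{lemma:u_props} (which controls $\int_0^\infty \int_\Omega |u_\eps|^2$), and inequality (\ref{eq:grad_c_bound}) applied near $t = 0$ together with the uniform bound $\|c_\eps(\cdot, 0)\|_\L{2} \leq \|c_0\|_\L{2}$, which yields $\int_0^\infty \int_\Omega |\grad c_\eps|^2 \leq \tfrac{1}{2}\|c_0\|_\L{2}^2$. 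Setting $L \defs 8CM/\delta'$ and $t_0 \defs T_1 + L$, the mean value theorem on $[T_1, t_0]$ then produces at least one $t_\eps \in [T_1, t_0]$ with $\mathcal{F}_\eps(t_\eps) \leq M/L = \delta'/(8C)$. Since $t_\eps \geq T_1$, the second smallness hypothesis of \Cref{lemma:small_things_stay_small} is inherited for free, and invoking that lemma at $t_\eps$ yields both the pointwise-in-time bounds on the three components of $\mathcal{F}_\eps$ for all $t \geq t_0 \geq t_\eps$ and the dissipative integrability estimates on $[t_0, \infty) \subseteq [t_\eps, \infty)$.

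The one nontrivial point, which ultimately justifies why all of the preceding machinery was necessary, is securing the $\eps$-uniform integral bound $\int_0^\infty \mathcal{F}_\eps \leq M$; once this is in hand, the pigeonhole selection of $t_\eps$ and the application of \Cref{lemma:small_things_stay_small} are routine. Notably, the global-in-time control of $\int_\Omega n_\eps \ln(n_\eps/\overline{n_0})$ and of $\int_\Omega |u_\eps|^2$ rests decisively on the new functional inequalities of \Cref{theorem:func_ineq} via \Cref{lemma:nlnn_integrabilty} and \Cref{lemma:u_props}, so this is precisely the step at which the analytic novelty of the paper bears its fruit.
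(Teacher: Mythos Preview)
Your proposal is correct and follows essentially the same approach as the paper: use \Cref{lemma:grad_c_long_time_small} to secure hypothesis (\ref{eq:grad_c_int_small}) beyond some time $T_1$, use the $\eps$-uniform finiteness of $\int_0^\infty \mathcal{F}_\eps$ (assembled from \Cref{lemma:nlnn_integrabilty}, \Cref{lemma:u_props}, and (\ref{eq:grad_c_bound})) together with an averaging argument to locate $t_\eps \in [T_1, t_0]$ with $\mathcal{F}_\eps(t_\eps) \leq \delta'/(8C)$, and then invoke \Cref{lemma:small_things_stay_small}. The only cosmetic difference is that you make explicit the translation step $\delta' = \delta'(\delta)$ needed so that $\mathcal{F}_\eps \leq \delta'$ yields each individual summand $\leq \delta$, which the paper leaves implicit in its ``without loss of generality $\delta < \delta_0$''.
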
 
\begin{proof}
	Let $\delta_0 > 0$, $C > 0$ and the functional $\mathcal{F}_\eps$ be as in \Cref{lemma:small_things_stay_small}. Without loss of generality, we assume $\delta < \delta_0$.
	Because of \Cref{lemma:grad_c_long_time_small}, we find $t_1 > 0$ such that
	\[
		\int_{t_1}^\infty \int_\Omega |\grad c_\eps|^2 \leq \frac{\delta}{8 C^2} \;\;\;\; \text{ for all } \eps \in (0,1).
	\]
	Because of
	\Cref{lemma:nlnn_integrabilty}, \Cref{lemma:basic_props}, and \Cref{lemma:u_props}, we further know that there exists a constant $K_1 > 0$ such that
	\[
		\frac{1}{t - t_1} \int_{t_1}^{t} \mathcal{F}_\eps(s) \d s \leq \frac{1}{t - t_1} \left[ \int_{0}^{\infty} \int_\Omega n_\eps \ln\left(\frac{n_\eps}{\;\overline{n_0}\;}\right) + \frac{1}{2} \int_{0}^\infty \int_\Omega |\grad c_\eps|^2 + \frac{1}{2C} \int_{0}^{\infty}\int_\Omega |u_\eps|^2  \right] \leq \frac{K_1}{t - t_1} 
	\]
	for all $\eps \in (0,1)$ and $t > t_1$. This implies that, for $t_0 \defs \frac{8 C K_1}{\delta} + t_1$ and all $\eps \in (0,1)$, we have
	\[
		\frac{1}{t_0 - t_1} \int_{t_1}^{t_0} \mathcal{F}_\eps(s) \d s \leq \frac{\delta}{8 C}.
	\]
	Therefore for each $\eps \in (0,1)$, there must exist $t_\eps \in (t_1, t_0)$ such that
	\[
		\mathcal{F}_\eps(t_\eps) \leq \frac{\delta}{8 C}.
	\]
	This directly implies our desired result from $t_\eps$ and therefore from $t_0$ onward for all $\eps \in (0,1)$ by application of \Cref{lemma:small_things_stay_small}. 
\end{proof}
\subsection{Eventual boundedness of $\|n_\eps\|_\L{\infty}$, $\|\grad c_\eps\|_\L{p}$, $\|A^\beta u_\eps\|_\L{p}$ for $p \in [1,\infty)$ and $\beta \in (\frac{1}{2}, 1)$ via bootstrap arguments}\label{section:bootstrap}
The eventual smallness and integrability results of the previous section now give us a critical foothold to establish even better uniform a priori bounds for all three solutions components of our approximate solutions from some large time $t_0 > 0$ onward.
The methods used for this will be a combination of testing procedures and semigroup methods used in a fairly standard bootstrap process. 
\\[0.5em]
Our first step of this section will therefore be to improve our thus far very weak bounds for the family $(n_\eps)_{\eps \in (0,1)}$ to $L^p(\Omega)$ bounds for arbitrary but finite $p$. This is done mostly by testing the first equation in (\ref{approx_system}) with $n_\eps^{p-1}$ and using the results of the previous section to argue that from some time $t_0 > 0$ onward the following is true: In any time interval of length $1$, there exists at least one time, at which $n_\eps$ is bounded in $L^p(\Omega)$, and, from that time on, the growth of $\int_\Omega n_\eps^p(\cdot, t)$ is at most exponential, whereby the bound and all the growth parameters are independent of the choice of interval. Taken together, these two facts directly imply our desired result. \\[0.5em]
This approach as well as the one used in the lemma immediately following it are again inspired by similar methods seen in \cite{WinklerSmallMass}.
\begin{lemma}
	\label{lemma:n_lp_bound}
	There exists $t_0 > 0$ such that, for each $p \in (1,\infty)$, there is $C(p) > 0$ with
	\begin{equation*}
		\|n_\eps(\cdot, t)\|_\L{p} \leq C(p)
	\end{equation*}
	for all $t > t_0$ and $\eps \in (0,1)$. 
\end{lemma}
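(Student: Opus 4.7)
The plan is to fix $p > 1$, test the first equation in (\ref{approx_system}) with $n_\eps^{p-1}$, and close the resulting differential inequality using the eventual smallness and integrability facts already established in \Cref{lemma:things_are_small}. First, I invoke \Cref{lemma:things_are_small} once with a sufficiently small, but $p$-independent, $\delta > 0$, to obtain $t_0 > 0$ such that from $t_0$ onwards the quantities $\|\grad c_\eps(\cdot, t)\|_\L{2}^2$, $\int_{t_0}^\infty\int_\Omega |\laplace c_\eps|^2$, $\int_{t_0}^\infty\int_\Omega |\grad u_\eps|^2$, and $\int_{t_0}^\infty\int_\Omega |\grad n_\eps|^2/n_\eps$ are all bounded by $\delta$ uniformly in $\eps \in (0,1)$.

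Testing the first equation of (\ref{approx_system}) with $n_\eps^{p-1}$, using $\div u_\eps = 0$ together with $u_\eps|_{\partial\Omega} = 0$ to eliminate the convective contribution, and combining Young's inequality with the pointwise bound $|S_\eps| \leq S_0(\|c_0\|_\L{\infty})$ from (\ref{S_0_bound}), I obtain an inequality of the form
\[
\frac{\d}{\d t} \int_\Omega n_\eps^p + c_1(p) \int_\Omega |\grad n_\eps^{p/2}|^2 \leq c_2(p) \int_\Omega n_\eps^p |\grad c_\eps|^2
\]
for all $t > 0$ and $\eps \in (0,1)$. The key step is to control the right-hand side via Hölder, $\int_\Omega n_\eps^p |\grad c_\eps|^2 \leq \|n_\eps^{p/2}\|_\L{4}^2\|\grad c_\eps\|_\L{4}^2$, followed by two-dimensional Gagliardo--Nirenberg-type interpolations of $\|n_\eps^{p/2}\|_\L{4}$ between $\|\grad n_\eps^{p/2}\|_\L{2}$ and $\|n_\eps^{p/2}\|_{L^{2/p}(\Omega)} = \|n_\eps\|_\L{1}^{p/2} = m_0^{p/2}$, and of $\|\grad c_\eps\|_\L{4}$ between $\|\laplace c_\eps\|_\L{2}$ and $\|\grad c_\eps\|_\L{2}$. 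By splitting with Young's inequality and exploiting the pointwise smallness of $\|\grad c_\eps\|_\L{2}^2$ and the time-integrability of $\|\laplace c_\eps\|_\L{2}^2$, I can absorb part of the resulting expression into the dissipative term on the left and end up with an ODI of the form
\[
y'(t) \leq \alpha_\eps(t)\, y(t) + \beta_\eps(t), \qquad y(t) \defs \int_\Omega n_\eps^p(\cdot, t),
\]
with $\int_{t_0}^\infty \alpha_\eps$ and $\int_{t_0}^\infty \beta_\eps$ bounded uniformly in $\eps$.

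The last ingredient is a starting-value estimate. Since $\int_{t_0}^\infty \int_\Omega |\grad n_\eps|^2/n_\eps = 4\int_{t_0}^\infty \int_\Omega |\grad \sqrt{n_\eps}|^2 \leq \delta$, a pigeonhole argument yields, for each $\eps \in (0,1)$, some $t_\eps \in [t_0, t_0+1]$ with $\int_\Omega |\grad \sqrt{n_\eps}(\cdot, t_\eps)|^2 \leq \delta$. Combined with mass conservation $\|\sqrt{n_\eps}(\cdot, t_\eps)\|_\L{2}^2 = m_0$ from \Cref{lemma:basic_props} and the two-dimensional Sobolev embedding $W^{1,2}(\Omega) \hookrightarrow L^{2p}(\Omega)$, this provides an $\eps$-independent bound for $\|n_\eps(\cdot, t_\eps)\|_\L{p}$ (with constant depending on $p$). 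A Grönwall-type argument applied to the ODI on $[t_\eps, \infty)$ then lifts this to a uniform bound for all $t \geq t_0 + 1$, and redefining $t_0$ to incorporate the shift by $1$ yields the claim.

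The main obstacle is calibrating the Gagliardo--Nirenberg interpolation so that the coupling term $c_2(p)\int_\Omega n_\eps^p |\grad c_\eps|^2$ is split into one piece absorbed by the dissipation $c_1(p)\int_\Omega |\grad n_\eps^{p/2}|^2$ (using the pointwise smallness of $\|\grad c_\eps\|_\L{2}$) and another piece that at most multiplies $\int_\Omega n_\eps^p$ by a factor integrable in time through $\|\laplace c_\eps\|_\L{2}^2$; in particular, the remainder must not generate a superlinear dependence on $y(t)$. A secondary subtlety is that $t_0$ must be chosen independently of $p$, which is possible because the $\delta$ required in \Cref{lemma:things_are_small} can be fixed at the outset (only the constant $C(p)$ needs to degenerate with $p$ through the Sobolev and Gagliardo--Nirenberg constants).
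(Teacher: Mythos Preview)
Your overall strategy is sound and close to the paper's, but there is a gap in the interpolation step that prevents the argument from closing as written. When you interpolate $\|n_\eps^{p/2}\|_\L{4}$ between $\|\grad n_\eps^{p/2}\|_\L{2}$ and $\|n_\eps^{p/2}\|_{L^{2/p}(\Omega)} = m_0^{p/2}$, the Gagliardo--Nirenberg exponent on the gradient is $\theta = \frac{2p-1}{2p}$, so the main contribution to the right-hand side is of size $\|\grad n_\eps^{p/2}\|_\L{2}^{(2p-1)/p}\|\laplace c_\eps\|_\L{2}\|\grad c_\eps\|_\L{2}$. Young's inequality with the only admissible pair of exponents $\bigl(\tfrac{2p}{2p-1},\,2p\bigr)$, needed to absorb the gradient factor into $c_1(p)\int_\Omega |\grad n_\eps^{p/2}|^2$, leaves a residue $(\|\laplace c_\eps\|_\L{2}\|\grad c_\eps\|_\L{2})^{2p}$. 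Even with the pointwise smallness $\|\grad c_\eps\|_\L{2}^2\leq\delta$, this reduces only to $\delta^{p}\|\laplace c_\eps\|_\L{2}^{2p}$, and you have no control on $\int_{t_0}^\infty \|\laplace c_\eps\|_\L{2}^{2p}$ for $p>1$: \Cref{lemma:things_are_small} only provides $\int_{t_0}^\infty \|\laplace c_\eps\|_\L{2}^2\leq\delta$. Hence $\beta_\eps$ is not known to be integrable, and the global Gr\"onwall argument on $[t_\eps,\infty)$ fails. Note also that with this interpolation choice no factor of $y$ ever appears on the right, so the linear ODI form $y'\leq\alpha_\eps y+\beta_\eps$ does not arise naturally.

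The fix is to interpolate against $\|n_\eps^{p/2}\|_\L{2}=y^{1/2}$ instead of the mass, exactly as the paper does; then $\|n_\eps^{p/2}\|_\L{4}^2\leq K\bigl(\|\grad n_\eps^{p/2}\|_\L{2}\,y^{1/2}+y\bigr)$ and, after one Young split, the cross term yields $K\|\laplace c_\eps\|_\L{2}^2\|\grad c_\eps\|_\L{2}^2\,y$ and $K\|\laplace c_\eps\|_\L{2}\|\grad c_\eps\|_\L{2}\,y$. Both coefficients are integrable on $[t_0,\infty)$ uniformly in $\eps$: the first by \Cref{lemma:things_are_small}, the second by Cauchy--Schwarz in time together with \Cref{lemma:grad_c_long_time_small} (or (\ref{eq:grad_c_bound})). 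With this correction your single-starting-time global Gr\"onwall argument goes through and is in fact somewhat cleaner than the paper's route: the paper deliberately allows a non-integrable $+y$ term in the ODI and compensates by relocalizing---for every $t_1>t_0$ it chooses a fresh $t_\eps\in(t_1-1,t_1)$ with small $\int_\Omega|\grad n_\eps|^2/n_\eps$ and integrates the ODI only on the unit interval $(t_\eps,t_\eps+1)\ni t_1$, so the constant coefficient contributes at most a factor $e^{p}$.
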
 
\begin{proof}
	Due to \Cref{lemma:things_are_small}, there exists $t_0 > 1$ such that
	\begin{equation} \label{eq:integrability_laplace_c}
		\int_{t_0 - 1}^\infty \int_\Omega\frac{|\grad n_\eps|^2}{n_\eps} \leq 1, \;\;\;\; \int_{t_0 - 1}^\infty \int_\Omega |\laplace c_\eps|^2 \leq 1
	\end{equation}
	and
	\begin{equation}\label{eq:gradient_c_smaller_1}
		\|\grad c_\eps(\cdot, t)\|_\L{2} \leq 1 \;\;\;\; \text{ for all } t > t_0 - 1
	\end{equation}
	for all $\eps \in (0,1)$.
	\\[0.5em] 
	We fix $p \in (1,\infty)$ and $t_1 > t_0$. Because 
	\[
		\int_{t_1-1}^{t_1} \int_\Omega\frac{|\grad n_\eps|^2}{n_\eps} \leq 1 \;\;\;\; \text{ for all }\eps\in(0,1),
	\]
	there must, for each $\eps\in(0,1)$, exist $t_\eps \in (t_1-1,t_1)$ such that
	\[
		\int_\Omega\frac{|\grad n_\eps(\cdot, t_\eps)|^2}{n_\eps(\cdot, t_\eps)} \leq 1.
	\] 
	Because of the Gagliardo--Nirenberg inequality, this implies 
	\begin{align*}
		\|n_\eps(\cdot, t_\eps)\|_\L{p} &= \|n^\frac{1}{2}_\eps(\cdot, t_\eps)\|^2_\L{2p} \leq K_1\left[ \|\grad n^\frac{1}{2}_\eps(\cdot, t_\eps)\|^\alpha_\L{2}\|n^\frac{1}{2}_\eps(\cdot, t_\eps)\|^{1-\alpha}_\L{2} + \|n^\frac{1}{2}_\eps(\cdot, t_\eps)\|_\L{2} \right]^2 \\
		&\leq 2K_1\left[\left( \int_\Omega \frac{|\grad n_\eps(\cdot, t_\eps)|^2}{n_\eps(\cdot, t_\eps)} \right)^{\alpha} \left( \int_\Omega n_0\right)^{1-\alpha} + \int_\Omega n_0 \right] \\
		&\leq 2K_1 \left[ \left( \int_\Omega n_0\right)^{1-\alpha} + \int_\Omega n_0 \right] =: K_2  \numberthis \label{eq:n_lp_initial_value}
	\end{align*}
	with some $K_1 > 0$ and $\alpha \defs \frac{p-1}{p} \in (0,1)$ for all $\eps \in (0,1)$.
	As our next step, a standard testing procedure yields that
	\begin{align*}
		\frac{1}{p} \frac{\d}{\d t} \int_\Omega n_\eps^p &= -(p-1)\int_\Omega |\grad n_\eps|^2 n_\eps^{p-2} + (p-1)\int_\Omega  n^{p-1}_\eps \grad  n_\eps \cdot S_\eps(\cdot, n_\eps, c_\eps) \grad c_\eps \\
		&\leq - \frac{p-1}{2}\int_\Omega |\grad n_\eps|^2 n_\eps^{p-2} + K_3\int_\Omega |\grad c_\eps|^2 n_\eps^{p} \\	&
		\leq - \frac{2(p-1)}{p^2}\int_\Omega |\grad n_\eps^\frac{p}{2}|^2 + K_3 \|\grad c_\eps\|^2_\L{4} \|n_\eps^\frac{p}{2}\|^2_\L{4} \;\;\;\; \text{ for all } t > 0 \text{ and } \eps \in (0,1)
	\end{align*}
	with $K_3 \defs \frac{p-1}{2}S^2_0(\|c_0\|_\L{\infty})$. According to the Gagliardo--Nirenberg inequality (cf.\ \cite{GNIClassic}), well-known elliptic regularity theory (cf.\ \cite[Theorem 19.1]{FriedmanPDE}), the Poincaré inequality (cf.\ \cite{BrezisFAandPDE}) and (\ref{eq:gradient_c_smaller_1}), there exists a constant $K_4 > 0$ with
	\[
		\|n_\eps^\frac{p}{2}\|^2_\L{4} \leq K_4 \|\grad n_\eps^\frac{p}{2}\|_\L{2}\|n_\eps^\frac{p}{2}\|_\L{2} + K_4\|n_\eps^\frac{p}{2}\|^2_\L{2} \;\;\;\; \text{ for all } t > 0 \text{ and } \eps \in (0,1)
	\]
	and
	\[
		\|\grad c_\eps\|^2_\L{4} \leq K_4 \| \laplace c_\eps\|_\L{2} \| \grad c_\eps \|_\L{2}  \leq K_4 \| \laplace c_\eps\|_\L{2} \;\;\;\; \text{ for all } t > t_0 - 1 \text{ and } \eps \in (0,1)
	\]
	implying that
	\begin{align*}
		\frac{1}{p} \frac{\d}{\d t} \int_\Omega n_\eps^p
		&\leq  - \frac{2(p-1)}{p^2}\int_\Omega |\grad n_\eps^\frac{p}{2}|^2 + K_3 K_4^2 \| \laplace c_\eps\|_\L{2} \left(\|\grad n_\eps^\frac{p}{2}\|_\L{2}\|n_\eps^\frac{p}{2}\|_\L{2} + \|n_\eps^\frac{p}{2}\|^2_\L{2}\right) \\
		&\leq - \frac{(p-1)}{p^2}\int_\Omega |\grad n_\eps^\frac{p}{2}|^2 + K_5 \int_{\Omega} |\laplace c_\eps|^2 \int_{\Omega} n_\eps^p + \int_{\Omega} n_\eps^p \\
		&\leq  K_5 \int_{\Omega} |\laplace c_\eps|^2 \int_{\Omega} n_\eps^p + \int_{\Omega} n_\eps^p \;\;\;\;\;\; \text{ for all } t > t_0 - 1 \text{ and } \eps \in (0,1)
	\end{align*}
	with $K_5 \defs (\frac{p^2}{4(p-1)} + \frac{1}{4}) K_3^2K_4^4$.
	This differential inequality combined with (\ref{eq:integrability_laplace_c}) and (\ref{eq:n_lp_initial_value}), then gives us that
	\[
		\int_\Omega n_\eps^p \leq K_2^p \exp\left( pK_5\int_{t_\eps}^t \int_{\Omega} |\laplace c_\eps|^2  + p(t-t_\eps)\right) \leq K_2 ^p \exp\left( pK_5  + p\right)\feds K_6 \;\;\;\; \text{ for all } t \in (t_\eps, t_\eps + 1) 
	\]
	because $t_\eps > t_1 - 1 > t_0 - 1$ by a standard comparison argument for all $\eps \in (0,1)$. As further $t_1 \in (t_\eps, t_\eps + 1)$ due to $t_\eps \in (t_1 - 1, t_1)$, this implies
	\[
		\int_\Omega n_\eps^p(\cdot, t_1) \leq K_6.
	\]
	Because $t_1 > t_0$ was arbitrary and $K_6$ is independent of $t_1$, this completes the proof.
\end{proof}
\noindent Given that we have now established quite a strong set of bounds for the family  $(n_\eps)_{\eps \in (0,1)}$, which will make the $n_\eps\grad \phi$ term in the third equation of (\ref{approx_system}) much more manageable, we will now turn our attention to said equation. 
\\[0.5em]
For this, let us briefly introduce some definitions and results used in the theory of fluid equations, which were already alluded to when talking about initial data regularity in the introduction (cf.\ (\ref{initial_data_props})) and will now become an important tool. We define $L_\sigma^p(\Omega)$ as the space of all solenoidal functions in $(L^p(\Omega))^2$, or more precisely
\[
	L^p_\sigma(\Omega) \defs \left\{\, f\in(\L{p})^2 \,\mid \, \div f = 0 \, \right\}
\]
for all $p \in (1,\infty)$ with $\div$ interpreted as a distributional derivative. As proven in e.g.\ \cite{GaldiNavierStokes1}, there then exists a unique, continuous projection 
\[ 
	\mathcal{P}_p: (L^p(\Omega))^2 \rightarrow L_\sigma^p(\Omega)
\]
called the Helmholtz projection for all $p \in (1,\infty)$. In fact, $\mathcal{P}_2$ is an orthogonal projection (cf.\ \cite[II.2.5]{MR1928881}).
\\[0.5em]
Using this, we then define the Stokes operator on $L^p_\sigma(\Omega)$ as 
\[
	A_p \defs -\mathcal{P}_p \laplace
\] with $D(A_p) \defs W^{2,p}_{0,\sigma}(\Omega) \defs (W^{2,p}_{0}(\Omega))^2 \cap L_\sigma^p(\Omega)$  (cf.\ \cite{MR605289}, \cite{MR1928881}) for all $p \in (1,\infty)$. In \cite{MR605289} and \cite{GigaDomainsFractionalPowers1985}, it is then shown that, for all $p\in (1,\infty)$, $A_p$ is sectorial (in fact its spectrum is contained in $(0,\infty)$), that $-A_p$ generates a bounded analytic semigroup $(e^{-t A_p})_{t \geq 0}$ of class $C_0$ on $D(A_p)$ and that the fractional powers $A_p^\alpha$ of $A_p$ exist for all $\alpha \in (0,1)$. Due to the regularity theory for the stationary Stokes equation (cf.\ e.g.\ \cite[Lemma IV.6.1]{GaldiNavierStokes1}), the Stokes operator further has the following property: For each $p\in(1,\infty)$, there exists $C(p) > 0$ such that 
\begin{equation}\label{eq:stokes_op_regularity}
	\|\varphi\|_{W^{2,p}(\Omega)} \leq C(p)\|A_p \varphi\|_\L{p} \;\;\;\;\text{ for all } \varphi \in D(A_p).
\end{equation}
Notably, this means that the norm $\|A_p \cdot \|_\L{p}$ is equivalent to the standard Sobolev norm $\|\cdot \|_{W^{2,p}(\Omega)}$ on $D(A_p)$ for all $p\in(1,\infty)$. As such, we will from hereon out consider $\|A_p\cdot\|_\L{p}$ to be the default norm of the space $D(A_p)$. In a similar vein when talking about the domains of the fractional powers $D(A_p^\alpha)$, $p \in (1,\infty)$, $\alpha \in (0,1)$, we will from now on always assume these spaces to be equipped with the corresponding norm $\|A_p^\alpha \cdot \|_\L{p}$. Framed in this way, the spaces $D(A_p^\alpha)$ then have rather favorable continuous embedding properties into certain Sobolev and Hölder spaces due to standard semigroup theory (cf.\ \cite{GigaDomainsFractionalPowers1985} or \cite[p.~39]{DanHenryGeom}) and the regularity property (\ref{eq:stokes_op_regularity}), which will be useful on multiple occasions.
\\[0.5em]
By revisiting the construction of the Helmholtz projection in \cite{GaldiNavierStokes1}, which rests on essentially solving a certain elliptic Neumann problem, we see that for sufficiently regular functions (e.g.\ $C^2$) all Helmholtz projections and Stokes operators introduced above are in fact identical and as such we will often just write $\mathcal{P}$ and $A$ for the projection and operator, respectively, where appropriate.
\\[0.5em]
Let us now return to our actual objective, namely the derivation of an $L^2(\Omega)$ bound for the gradients of the family $(u_\eps)_{\eps \in (0,1)}$. Structurally this proof is very similar to the one above in that we again establish boundedness for the gradients at one time in every time interval of length $1$ (from some time $t_0 > 0$ onward) and then derive an additional growth restriction in said interval, whereby both times all parameters are again independent of the choice of interval.
\begin{lemma}
	\label{lemma:grad_u_bound}
	There exist $t_0 > 0$ and $C > 0$ such that 
	\[
		\int_\Omega |\grad u_\eps(\cdot, t)|^2 \leq C 	
	\]
	for all $t > t_0$ and $\eps \in (0,1)$.
\end{lemma}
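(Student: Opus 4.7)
The strategy is to parallel that of \Cref{lemma:n_lp_bound}: thanks to \Cref{lemma:things_are_small}, once $t$ is sufficiently large every unit time interval contains a moment at which $\int_\Omega |\grad u_\eps|^2$ is small, so the task reduces to controlling the growth of $y_\eps(t) \defs \int_\Omega |\grad u_\eps(\cdot,t)|^2$ over a time interval of length one by a constant independent of the interval and of $\eps \in (0,1)$.

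The starting point is to project the third equation in (\ref{approx_system}) through the Helmholtz projection $\mathcal{P}$ and test with $A u_\eps$, which is legitimate because the smoothness of the approximate solutions and the homogeneous Dirichlet boundary condition place $u_\eps(\cdot,t)$ in $D(A_2)$. The pressure gradient is annihilated by $\mathcal{P}$, and the identity reads
\[
\frac{1}{2}\frac{\d}{\d t}\int_\Omega |\grad u_\eps|^2 + \int_\Omega |Au_\eps|^2 = -\int_\Omega \big((u_\eps \cdot \grad) u_\eps\big)\cdot Au_\eps + \int_\Omega n_\eps \grad \phi \cdot Au_\eps.
\]
The buoyancy source I would handle by Young's inequality, absorbing $\tfrac{1}{4}\|Au_\eps\|_\L{2}^2$ into the dissipation and leaving a constant multiple of $\|n_\eps\|_\L{2}^2$ which is uniformly controlled by \Cref{lemma:n_lp_bound} with $p=2$. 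The convective term I would first factor through H\"older as $\|u_\eps\|_\L{4}\|\grad u_\eps\|_\L{4}\|Au_\eps\|_\L{2}$, then interpolate via the two-dimensional Gagliardo--Nirenberg inequality and the elliptic regularity estimate (\ref{eq:stokes_op_regularity}) in the form $\|u_\eps\|_\L{4}\leq C\|u_\eps\|_\L{2}^{1/2}\|\grad u_\eps\|_\L{2}^{1/2}$ and $\|\grad u_\eps\|_\L{4}\leq C\|\grad u_\eps\|_\L{2}^{1/2}\|Au_\eps\|_\L{2}^{1/2}$, and finally use Young once more to siphon off another quarter of $\|Au_\eps\|_\L{2}^2$ into the dissipation, leaving a residual term of the form $C\|u_\eps\|_\L{2}^2\, y_\eps(t)^2$. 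Coupled with the spectral estimate $\|Au_\eps\|_\L{2}^2 \geq \lambda_1\|\grad u_\eps\|_\L{2}^2$, where $\lambda_1 > 0$ is the first eigenvalue of $A_2$, this produces a differential inequality of the schematic form
\[
y_\eps'(t) + \lambda_1 y_\eps(t) \leq C_1 \|u_\eps(\cdot,t)\|_\L{2}^2 \, y_\eps(t)^2 + C_2.
\]

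I would then close the argument by invoking \Cref{lemma:things_are_small} with $\delta > 0$ chosen so small that the quadratic polynomial $C_1 \delta Y^2 - \lambda_1 Y + C_2$ possesses two positive real roots $Y_- < Y_+$. This supplies a time $t_0(\delta)$ independent of $\eps$ with $\|u_\eps(\cdot, t)\|_\L{2}^2 \leq \delta$ for all $t \geq t_0$ and $\int_{t_0}^\infty\int_\Omega |\grad u_\eps|^2 \leq \delta$ for all $\eps \in (0,1)$. For arbitrary $t_1 > t_0 + 1$ the latter integrability produces $t_\eps \in (t_1-1, t_1)$ with $y_\eps(t_\eps) \leq \delta$, and fixing some $M$ strictly between $\max(\delta, Y_-)$ and $Y_+$, a standard barrier argument applied to the above differential inequality forces $y_\eps \leq M$ on $[t_\eps, t_\eps+1]$ and in particular at $t_1$. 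Since $M$ depends only on the fixed system data, this gives the claim. The principal obstacle I anticipate is precisely the quadratic nonlinearity generated by the convective term of the full Navier--Stokes equation: in contrast to the essentially linear driver in \Cref{lemma:n_lp_bound}, it could in principle force $y_\eps$ to blow up in finite time, and the rescue depends critically on the fact that \Cref{lemma:things_are_small}, itself built on the new functional inequalities of \Cref{theorem:func_ineq}, makes both $\|u_\eps\|_\L{2}$ and $y_\eps(t_\eps)$ simultaneously small, so that both the coefficient and the initial datum of the effective ODE can be tuned down far enough for a stable equilibrium near the linear balance $y \approx C_2/\lambda_1$ to exist and attract.
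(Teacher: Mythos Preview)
Your proof is correct and shares the overall skeleton with the paper's argument---test the projected third equation with $Au_\eps$, use \Cref{lemma:n_lp_bound} for the buoyancy term, extract a good starting time from \Cref{lemma:things_are_small}, and control the growth of $y_\eps = \|\grad u_\eps\|_\L{2}^2$ over a unit interval---but the details diverge in two places. For the convective term the paper uses the $L^\infty$--$L^2$ split $\|u_\eps\|_\L{\infty}^2\|\grad u_\eps\|_\L{2}^2$ together with the two-dimensional Agmon-type bound $\|u_\eps\|_\L{\infty}^2 \leq K\|Au_\eps\|_\L{2}\|u_\eps\|_\L{2}$, arriving at the constant-coefficient inequality $y_\eps' \leq K_3 y_\eps^2 + K_4$; you instead keep the factor $\|u_\eps\|_\L{2}^2$ in front of the quadratic term via the $L^4$--$L^4$ split and retain the damping $\lambda_1 y_\eps$. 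Consequently the closures differ: the paper rewrites $K_3 y_\eps^2 = (K_3 y_\eps)y_\eps$ and applies Gr\"onwall with the integral bound $\int_{t_\eps}^{t_\eps+1} y_\eps \leq 1$ supplied by \Cref{lemma:things_are_small}, while you exploit the \emph{pointwise} smallness of $\|u_\eps\|_\L{2}^2$ from the same lemma to tune the quadratic coefficient and run an invariant-region barrier for the resulting autonomous ODE. The paper's route is marginally more economical in that it only needs \Cref{lemma:things_are_small} with $\delta=1$ (no tuning), whereas yours requires choosing $\delta$ depending on $C_1,C_2,\lambda_1$; on the other hand your barrier, once established, actually bounds $y_\eps$ for all $t\geq t_\eps$, so the per-unit-interval framing you adopted is not even needed.
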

\begin{proof}
	We first fix $t_0 > 1$ and $K_1 > 0$ such that
	\begin{equation} \label{eq:n_u_bounds}
		\|n_\eps(\cdot, t)\|_\L{2} \leq K_1, \;\;\;\; \|u_\eps(\cdot, t)\|_\L{2} \leq 1 \;\;\;\;\;\;\;\; \text{ for all } t > t_0 - 1 
	\end{equation}
	and
	\begin{equation} \label{eq:grad_integrability}
		\int_{t_0 - 1}^\infty \int_\Omega |\grad u_\eps|^2 \leq 1
	\end{equation}
	for all $\eps \in (0,1)$ according to \Cref{lemma:n_lp_bound} and \Cref{lemma:things_are_small}. 
	\\[0.5em]
	Let $t_1 > t_0$ be fixed, but arbitrary. Then for each $\eps \in (0,1)$, there exists $t_\eps \in (t_1-1, t_1)$ such that
	\begin{equation} \label{eq:grad_small_at_one_point}
		\int_\Omega |\grad u_\eps(\cdot, t_\eps)|^2 \leq 1  
	\end{equation}
	due to (\ref{eq:grad_integrability}).
	We may now further fix $K_2 > 0$ such that
	\begin{equation}
		\|u_\eps\|^2_\L{\infty} \leq K_2 \|A u_\eps\|_\L{2} \| u_\eps\|_\L{2} \leq K_2 \|A u_\eps\|_\L{2}  \;\;\;\; \text{ for all } t > t_0 - 1 \text{ and } \eps \in (0,1) \label{eq:grad_u_initial_value}
	\end{equation}
	due to (\ref{eq:n_u_bounds}), the Gagliardo--Nirenberg inequality and the regularity property in (\ref{eq:stokes_op_regularity}).
	We then apply the Helmholtz projection to the third equation in (\ref{approx_system}) and test with $A u_\eps$ to see that
	\begin{align*}
		\frac{1}{2}\frac{\d}{\d t}\int_\Omega |\grad u_\eps|^2 = \frac{1}{2}\frac{\d}{\d t}\int_\Omega |A^\frac{1}{2} u_\eps|^2 &= -\int_\Omega |Au_\eps|^2 -\int_\Omega Au_\eps \cdot \mathcal{P}\left[ (u_\eps \cdot \grad) u_\eps \right] + \int_\Omega Au_\eps \cdot \mathcal{P}\left[ n_\eps  \grad \phi \right] \\
		&\leq -\frac{1}{2}\int_\Omega |Au_\eps|^2 + \int_\Omega \left|\mathcal{P}\left[ (u_\eps \cdot \grad) u_\eps \right] \right|^2+ \int_\Omega \left|\mathcal{P} \left[ n_\eps  \grad \phi \right] \right|^2 \\
		&\leq -\frac{1}{2}\int_\Omega |Au_\eps|^2 + \int_\Omega \left| (u_\eps \cdot \grad) u_\eps \right|^2 + \int_\Omega \left| n_\eps  \grad \phi \right|^2 \\
		&\leq -\frac{1}{2}\int_\Omega |Au_\eps|^2 + \|u_\eps\|^2_\L{\infty} \|\grad u_\eps\|^2_\L{2} + K_1^2 \|\grad \phi\|^2_\L{\infty} \\
		&\leq -\frac{1}{2}\int_\Omega |Au_\eps|^2 + K_2 \|A u_\eps\|_\L{2} \|\grad u_\eps\|^2_\L{2} + K_1^2 \|\grad \phi\|^2_\L{\infty} \\
		&\leq \frac{K_2^2}{2}\left(\int_\Omega |\grad u_\eps|^2 \right)^2 + K_1^2 \|\grad \phi\|^2_\L{\infty} 
	\end{align*}
	for all $t > t_0 - 1$ and $\eps \in (0,1)$ by using (\ref{eq:grad_u_initial_value}), Young's inequality and some fundamental properties of the fractional powers of the Stokes operator (cf.\ \cite{DanHenryGeom}, \cite[Lemma III.2.2.1]{MR1928881}). This further  implies that
	\[
		\frac{\d}{\d t}\int_\Omega |\grad u_\eps|^2 \leq K_3\left(\int_\Omega |\grad u_\eps|^2 \right)^2 + K_4 \;\;\;\; \text{ for all } t > t_0 - 1 \text{ and } \eps \in (0,1)
	\]
	with $K_3 \defs K_2^2$, $K_4 \defs 2K_1^2\|\grad \phi\|^2_\L{\infty}$. This differential inequality combined with (\ref{eq:grad_small_at_one_point}) and  (\ref{eq:grad_integrability}) then gives us that
	\begin{align*}
		\int_\Omega |\grad u_\eps(\cdot, t)|^2 &\leq \left( \int_\Omega |\grad u_\eps(\cdot, t_\eps)|^2 \right)\exp\left( K_3\int_{t_\eps}^{t} \int_\Omega |\grad u_\eps|^2 \right)  + K_4 \int_{t_\eps}^{t} \exp\left( K_3\int_{s}^{t} \int_\Omega |\grad u_\eps|^2 \right) \d s \\
		&\leq (1+K_4) e^{K_3} \feds K_5 \;\;\;\; \text{ for all } t \in (t_\eps, t_\eps + 1) \text{ and } \eps \in (0,1) 
	\end{align*}
	because $t_\eps > t_1 - 1 > t_0 - 1$ by standard comparison argument. As further $t_1 \in (t_\eps, t_\eps + 1)$ due to the fact that $t_\eps \in (t_1 - 1, t_1)$, this implies
	\[
		 \int_\Omega |\grad u_\eps(\cdot, t_1)|^2 \leq K_5.
	\]
	Given that $t_1 > t_0$ was arbitrary and $K_5$ is independent of $t_1$, this completes the proof.
\end{proof}
\noindent As already seen in the proof above, deriving an $L^2(\Omega)$ bound for the gradients of the family $(u_\eps)_{\eps\in(0,1)}$ is equivalent to deriving a bound for $\|A^\frac{1}{2} u_\eps(\cdot, t)\|_\L{2}$, which is fairly easy to check by using fundamental properties of the fractional powers of the Stokes operator (cf.\ \cite{DanHenryGeom}, \cite[Lemma III.2.2.1]{MR1928881}). As our next step then, we now want to expand on this by proving stronger bounds of the form $\|A^\beta u_\eps(\cdot, t)\|_\L{p} \leq C$ because the embedding properties of $D(A_2^\frac{1}{2})$ are not quite sufficient for our later arguments, namely the derivation of certain Hölder-type bounds. This is done mostly by using the above results in combination with semigroup methods and well-known smoothing properties of the Stokes semigroup (cf.\ \cite{GigaSolutionsSemilinearParabolic1986}, \cite[p.~26]{DanHenryGeom}).
\begin{lemma}
	\label{lemma:Au_beta_bound}
	There exists $t_0 > 0$ such that, for each $\beta \in (\frac{1}{2},1)$ and $p \in (2,\infty)$, there is $C(\beta,p) > 0$ with
	\[
		\| A^\beta u_\eps(\cdot, t) \|_\L{p} \leq C(\beta, p)
	\]
	for all $t > t_0$ and $\eps \in (0,1)$.
\end{lemma}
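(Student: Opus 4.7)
The strategy is to apply the Helmholtz projection $\mathcal{P}$ to the third equation of (\ref{approx_system}), turning it into the abstract Stokes problem $\partial_t u_\eps + A u_\eps = -\mathcal{P}[(u_\eps \cdot \grad) u_\eps] + \mathcal{P}[n_\eps \grad \phi]$, and then work with the variation-of-constants representation
\[
A^\beta u_\eps(t) = A^\beta e^{-(t-s)A} u_\eps(s) + \int_s^t A^\beta e^{-(t-\tau)A} \mathcal{P}\bigl[\, n_\eps \grad \phi - (u_\eps \cdot \grad) u_\eps \,\bigr](\tau) \, d\tau
\]
starting from some time $s$ slightly before a large threshold $t_0$, at which point \Cref{lemma:n_lp_bound}, \Cref{lemma:grad_u_bound}, and the two-dimensional Sobolev embedding $W^{1,2}(\Omega) \hookrightarrow L^q(\Omega)$ together supply uniform bounds $\|n_\eps(\cdot, t)\|_\L{p} \leq C(p)$ for every $p \in (1,\infty)$, $\|\grad u_\eps(\cdot, t)\|_\L{2} \leq C$, and $\|u_\eps(\cdot, t)\|_\L{q} \leq C(q)$ for every $q < \infty$, all valid uniformly in $t > s$ and $\eps \in (0,1)$. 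The core analytical input will be the standard Stokes smoothing estimates (cf.\ \cite{GigaSolutionsSemilinearParabolic1986}, \cite{DanHenryGeom}) of the form $\|A^\beta e^{-\sigma A} \mathcal{P} f\|_\L{p} \leq C \sigma^{-\beta - (1/r - 1/p)} \|f\|_\L{r}$ for $1 < r \leq p < \infty$, combined with the continuous embeddings $D(A_p^\beta) \hookrightarrow L^\infty(\Omega)$ when $\beta p > 1$ and $D(A_p^\beta) \hookrightarrow W^{1,q}(\Omega)$ for suitable $q$ when $\beta > \tfrac{1}{2}$, both in two space dimensions.

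The forcing $n_\eps \grad \phi$ is unproblematic since its $L^p$-norm is uniformly bounded and the associated Duhamel integral converges for every $\beta < 1$. The main obstacle, and the reason a one-shot argument fails, is the quadratic convection $(u_\eps \cdot \grad) u_\eps$: written in divergence form $\grad \cdot (u_\eps \otimes u_\eps)$, it only admits control in $L^r$ for arbitrary $r < \infty$ with a correspondingly stronger singular weight $\sigma^{-\beta - 1/2 - (1/r - 1/p)}$ in the smoothing estimate, which forces the integrability condition $\beta + \tfrac{1}{2} < 1$ and hence precludes any $\beta > \tfrac{1}{2}$ at the first pass. A finite bootstrap will therefore be necessary to climb above this threshold.

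The bootstrap proceeds as follows. First pick $\beta_0 \in (0, \tfrac{1}{2})$ and $p_0 > 2$ with $\beta_0 p_0 > 1$; the direct Duhamel argument then closes and produces a uniform bound on $\|A^{\beta_0} u_\eps(\cdot, t)\|_\L{p_0}$, which via $D(A_{p_0}^{\beta_0}) \hookrightarrow L^\infty(\Omega)$ yields $\|u_\eps(\cdot, t)\|_\L{\infty} \leq C$ uniformly for $t \geq t_0$. With this $L^\infty$-control, the convection admits the pointwise estimate $\|(u_\eps \cdot \grad) u_\eps\|_\L{q} \leq \|u_\eps\|_\L{\infty} \|\grad u_\eps\|_\L{q}$, and each subsequent bound $\|A^{\alpha_k} u_\eps(\cdot, t)\|_\L{p_k} \leq C$ translates, via $D(A_{p_k}^{\alpha_k}) \hookrightarrow W^{1, q_k}(\Omega)$ with $q_k$ strictly growing as $\alpha_k$ approaches $1$, into an improved $L^{q_k}$-bound on $\grad u_\eps$. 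Reinserting this into the Duhamel formula renders the admissibility condition $\alpha_{k+1} + \tfrac{1}{q_k} - \tfrac{1}{p_{k+1}} < 1$, leaving enough room for $\alpha_{k+1}$ to be pushed strictly closer to $1$. After finitely many such iterations any prescribed $\beta \in (\tfrac{1}{2}, 1)$ and $p \in (2, \infty)$ can be reached, completing the proof.
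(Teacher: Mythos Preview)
Your approach is correct and reaches the same conclusion, but by a genuinely different route than the paper's. The paper does \emph{not} bootstrap: it fixes the target $\beta \in (\tfrac{1}{2}, 1)$ and $p > 2$ from the outset, picks an auxiliary $q \in (2,p)$ with $\beta + \tfrac{1}{q} - \tfrac{1}{p} < 1$, and estimates the convection directly via H\"older as $\|(u_\eps \cdot \nabla) u_\eps\|_{L^q} \leq \|u_\eps\|_{L^{q(p+q)/(p-q)}} \|\nabla u_\eps\|_{L^{(p+q)/2}}$. The first factor is already controlled by the $W^{1,2}$ bound from \Cref{lemma:grad_u_bound}; the second is interpolated between the known $L^2$ gradient bound and $\|\nabla u_\eps\|_{L^p} \leq C\|A^\beta u_\eps\|_{L^p}$ (using $\beta > \tfrac{1}{2}$), which produces a \emph{sublinear} power $\alpha < 1$ of the sought quantity. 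Working with the weighted supremum $M_\eps \defs \sup_{s \in (t_1-1,t_1]} (s - (t_1-1))^\beta \|A^\beta u_\eps(\cdot,s)\|_{L^p}$ over a unit time window, this yields an inequality of the form $M_\eps \leq C + C M_\eps^\alpha$, which closes in a single stroke via Young's inequality.

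Your finite bootstrap is sound and arguably more transparent, though longer: you trade the paper's interpolation-and-absorption trick for several explicit passes, first securing $\|u_\eps\|_{L^\infty}$ from an exponent $\beta_0 < \tfrac{1}{2}$ (using the divergence form of the convection) and then climbing in $W^{1,q}$ regularity for $\nabla u_\eps$. The paper's one-shot argument is more economical and, notably, never invokes an $L^\infty$ bound on $u_\eps$ as an intermediate ingredient; that bound is instead derived \emph{afterward} in \Cref{corollary:u_linfty_bound} as a corollary of the present lemma.
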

\begin{proof}
	We fix $\beta \in (\frac{1}{2},1)$ and $p \in (2,\infty)$. We then fix a time $t_0 > 1$ independently of $p$ and a constant $K_1 > 0$ such that
	\[
		\|u_\eps(\cdot, t)\|_\L{2} \leq 1, \;\;\;\; \|n_\eps(\cdot, t)\|_\L{p}\leq K_1, \;\;\;\;	\|\grad u_\eps(\cdot, t)\|_\L{2}\leq K_1 \;\;\;\;  \text{ for all } t > t_0 - 1 \text{ and } \eps \in (0,1)
	\]
	according to \Cref{lemma:things_are_small}, \Cref{lemma:n_lp_bound} and \Cref{lemma:grad_u_bound}. Lastly, we fix $q \in (2,p)$ such that
	\begin{equation} \label{eq:p_q_parameter_ineq}
		\beta + \frac{1}{q} - \frac{1}{p} < 1.
	\end{equation}
	The Sobolev embedding theorem (cf.\ \cite[Theorem 2.72]{EllipticFunctionSpaces}) implies that there further exists $K_2 > 0$ such that
	\[
		\|u_\eps(\cdot, t)\|_\L{\frac{q(p+q)}{p-q}} \leq K_2, \;\;\;\; \|u_\eps(\cdot, t)\|_\L{p}\leq K_2\;\;\;\;  \text{ for all } t > t_0 - 1 \text{ and } \eps \in (0,1)
	\]
	due to us working in two dimensions and due to the previously established bounds. 
	\\[0.5em]
	Let now $t_1 > t_0$ be fixed, but arbitrary.
	Then relying on the smoothing and continuity properties of the Stokes semigroup $(e^{-tA})_{t\geq 0}$ and the Helmholtz projection $\mathcal{P}_p$ (cf.\ \cite{GaldiNavierStokes1}, \cite[p.~201]{GigaSolutionsSemilinearParabolic1986}, \cite[p.~26]{DanHenryGeom}), we estimate each $u_\eps$ using the variation-of-constant representation of the third equation in (\ref{approx_system}) on $(t_1 - 1, t_1)$ after projecting with $\mathcal{P}$ as follows:
 	\begin{align*}
 	&\| \, A^{\beta}u_\eps(\cdot, t) \|_\L{p} \\
	&= \left\| A^{\beta}e^{-(t-(t_1 - 1))A}u_\eps(\cdot, t_1-1) - \int_{t_1-1}^t A^\beta e^{-(t-s)A} \mathcal{P}\left[ (u_\eps(\cdot, s) \cdot \grad) u_\eps(\cdot, s)  \right] \d s \right.  \\
 	&\;\;\;\;\;\;\;\;\left.+ \int_{t_1-1}^t A^\beta e^{-(t-s)A} \mathcal{P}\left[ n_\eps(\cdot, s) \grad \phi  \right] \d s \, \right\|_\L{p} \\
 	&\leq K_3(t-(t_1-1))^{-\beta}\|u_\eps(\cdot, t_1 - 1)\|_\L{p} + K_3\int_{t_1-1}^t (t-s)^{-\beta - \frac{1}{q} + \frac{1}{p}} \left\|\mathcal{P} \left[(u_\eps(\cdot, s) \cdot \grad) u_\eps(\cdot, s)\right]  \right\|_\L{q} \d s \\
 	&\;\;\;\;\;\;\;\;+ K_3\int_{t_1-1}^t (t-s)^{-\beta}\left\| \mathcal{P}\left[ n_\eps(\cdot, s) \grad \phi  \right] \right\|_\L{p} \d s \\
 	&\leq K_5(t-(t_1-1))^{-\beta} + K_5 + K_3 K_4\int_{t_1-1}^t (t-s)^{-\beta - \frac{1}{q} + \frac{1}{p}} \|u_\eps(\cdot, s)\|_\L{\frac{q(p+q)}{p-q}} \| \grad u_\eps(\cdot, s) \|_\L{\frac{p+q}{2}} \d s \\
 	&\leq K_5 (t-(t_1-1))^{-\beta} + K_5 + K_2 K_3 K_4\int_{t_1-1}^t (t-s)^{-\beta - \frac{1}{q} + \frac{1}{p}} \| \grad u_\eps(\cdot, s) \|_\L{\frac{p+q}{2}} \d s \numberthis \label{eq:Au_1}
 	\end{align*}
 	with some $K_3, K_4 > 0$ and $K_5 \defs \max( K_2K_3, \frac{K_1 K_3 K_4 \|\grad \phi\|_\L{\infty}}{1 - \beta})$ for all $t\in(t_1 - 1, t_1]$ and $\eps \in (0,1)$.
 	\\[0.5em]
 	Interpolation using the Hölder inequality combined with the fact that $D(A_p^\beta)$ embeds continuously into $W^{1,p}(\Omega)$ (cf.\ \cite[p.~39]{DanHenryGeom} or \cite{GigaDomainsFractionalPowers1985}) because $\beta > \frac{1}{2}$ then gives us $K_6 > 0$ such that
 	\begin{align*}
	 	\|\grad u_\eps(\cdot, t)\|_\L{\frac{p+q}{2}} &\leq \|\grad u_\eps(\cdot, t)\|^{1-\alpha}_\L{2}\|\grad u_\eps(\cdot, t)\|^\alpha_\L{p} \leq K_1^{1-\alpha} K_6 \|A^\beta u_\eps(\cdot, t)\|^\alpha_\L{p}
 	\end{align*}
 	for all $t > t_0 - 1$ and $\eps \in (0,1)$ with $\alpha \defs ( \frac{1}{2} - \frac{2}{p+q} ) ( \frac{1}{2} - \frac{1}{p} )^{-1} \in (0,1)$. In (\ref{eq:Au_1}), this then results in
 	\[
	 	\| A^{\beta}u_\eps(\cdot, t) \|_\L{p} \leq  K_5(t-(t_1-1))^{-\beta} + K_5 + K_7 \int_{t_1-1}^t (t-s)^{-\beta - \frac{1}{q} + \frac{1}{p}} \|A^\beta u_\eps(\cdot, s)\|^\alpha_\L{p} \d s\label{eq:Au_2} \numberthis
 	\]
 	for all $t\in(t_1 - 1, t_1]$ and $\eps \in (0,1)$ with $K_7 \defs K_1^{1-\alpha} K_2 K_3 K_4 K_6$.
	\\[0.5em]
	Let now \[
		M_\eps \defs  \sup_{s\in(t_1 - 1, t_1]} (s - (t_1-1))^\beta \| A^{\beta}u_\eps(\cdot, s) \|_\L{p} < \infty.
	\]
	for all $t\in(t_1 - 1, t_1]$ and $\eps \in (0,1)$. With this definition, we can conclude from (\ref{eq:Au_2}) that
	\[
		(t - (t_1-1))^\beta\| A^{\beta}u_\eps(\cdot, t) \|_\L{p} \leq  2K_5 + K_7M_\eps^\alpha(t - (t_1-1))^\beta\int_{t_1-1}^t (t-s)^{-\beta - \frac{1}{q} + \frac{1}{p}} (s - (t_1-1))^{-\alpha \beta} \d s\label{eq:Au_3} \numberthis
 	\]
	for all $t\in(t_1 - 1, t_1]$ and $\eps \in (0,1)$. As both of the exponents in the remaining integral are greater than $-1$ due to our choice of $q$, a straightforward estimation yields $K_8 > 0$ such that 
	\begin{align*}
		\int_{t_1-1}^t (t-s)^{-\beta - \frac{1}{q} + \frac{1}{p}} (s - (t_1-1))^{-\alpha \beta} \d s \leq K_8 (t - (t_1 - 1))^{1 - \beta  - \frac{1}{q} + \frac{1}{p} - \alpha \beta}
	\end{align*}
	and thus 
	\[
		(t - (t_1-1))^\beta\int_{t_1-1}^t (t-s)^{-\beta - \frac{1}{q} + \frac{1}{p}}(s - (t_1-1))^{-\alpha \beta} \d s \leq K_8
	\]
	for all $t\in(t_1 - 1, t_1]$ and $\eps \in (0,1)$ again due to our choice of $q$. If we apply this to (\ref{eq:Au_3}), we find that 
	\[
		M_\eps \leq 2K_5 + K_7K_8 M_\eps^\alpha \leq 2K_5 + (1-\alpha)(K_7K_8)^\frac{1}{1-\alpha} + \alpha M_\eps
	\]
	due to Young's inequality and therefore that
	\[
		M_\eps \leq \frac{2K_5}{1-\alpha} + (K_7K_8)^\frac{1}{1-\alpha} \sfed K_9
	\]
	for all $\eps \in (0,1)$. This further gives us that
	\[
		\|A^\beta u_\eps(\cdot, t_1)\|_\L{p} \leq M_\eps \leq K_9 \;\;\;\; \text{ for all } \eps \in (0,1)
	\]
	and thus completes the proof as $t_1 > t_0$ was arbitrary.
\end{proof}
\noindent Given the regularity properties of the fractional powers of the Stokes operator, we gain the following corollary, which translates the above abstract bounds into more familiar settings.
\begin{corollary} \label{corollary:u_linfty_bound}
	There exists $t_0 > 0$ such that, for each $\alpha \in (0,1)$, there is $C(\alpha) > 0$ such that
	\[
		\|u_\eps(\cdot, t)\|_{L^{\infty}(\Omega)} \leq \|u_\eps(\cdot, t)\|_{W^{1,\infty}(\Omega)}  \leq \|u_\eps(\cdot, t)\|_{C^{1+\alpha}(\overline{\Omega})} \leq C(\alpha)
	\]
	for all $t > t_0$ and $\eps \in (0,1)$.
\end{corollary}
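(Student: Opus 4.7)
The plan is to simply combine \Cref{lemma:Au_beta_bound} with the standard continuous embeddings of the fractional power domains $D(A_p^\beta)$ into Hölder spaces. The first two inequalities in the displayed chain are the trivial embeddings $C^{1+\alpha}(\overline{\Omega}) \hookrightarrow W^{1,\infty}(\Omega) \hookrightarrow L^\infty(\Omega)$, so all the work lies in obtaining the uniform bound on $\|u_\eps(\cdot,t)\|_{C^{1+\alpha}(\overline{\Omega})}$.

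Given $\alpha \in (0,1)$, the plan is to fix $p \in (2,\infty)$ so large that $p > \tfrac{2}{1-\alpha}$ and then pick $\beta \in (\tfrac{1}{2},1)$ close enough to $1$ that $2\beta - \tfrac{2}{p} > 1 + \alpha$. With these parameter choices, the chain of continuous embeddings
\[
D(A_p^\beta) \hookrightarrow W^{2\beta,p}_{0,\sigma}(\Omega) \hookrightarrow C^{1+\alpha}(\overline{\Omega}),
\]
which follows from the fractional power regularity theory for the Stokes operator combined with the Stokes elliptic estimate (\ref{eq:stokes_op_regularity}) and the standard Sobolev embedding in two dimensions (cf.\ \cite[p.~39]{DanHenryGeom} and \cite{GigaDomainsFractionalPowers1985}), yields a constant $K(\alpha,p,\beta) > 0$ such that
\[
\|\varphi\|_{C^{1+\alpha}(\overline{\Omega})} \leq K(\alpha,p,\beta) \|A^\beta \varphi\|_{L^p(\Omega)} \;\;\;\; \text{ for all } \varphi \in D(A_p^\beta).
\]

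I would then take $t_0 > 0$ to be exactly the time furnished by \Cref{lemma:Au_beta_bound} (which is independent of $\beta$ and $p$, so it is in particular independent of $\alpha$), and apply that lemma with the choices of $\beta$ and $p$ above. This gives $\|A^\beta u_\eps(\cdot,t)\|_{L^p(\Omega)} \leq C(\beta,p)$ for all $t > t_0$ and $\eps \in (0,1)$, which after composition with the embedding constant produces $C(\alpha) \defs K(\alpha,p,\beta) C(\beta,p)$ and thus the desired Hölder bound.

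There is no serious obstacle: the only point requiring a moment of thought is checking that the parameter window allowing both $\beta < 1$ and $2\beta - \tfrac{2}{p} > 1 + \alpha$ is non-empty for every $\alpha \in (0,1)$, which is immediate from $\alpha < 1$ by first taking $p$ large and then $\beta$ close to $1$. Everything else is bookkeeping of constants coming from already established results.
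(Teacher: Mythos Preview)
Your proposal is correct and follows essentially the same approach as the paper: choose $p\in(2,\infty)$ and $\beta\in(\tfrac{1}{2},1)$ so that $2\beta-\tfrac{2}{p}>1+\alpha$, invoke \Cref{lemma:Au_beta_bound} (noting that the time $t_0$ there is independent of $\beta$ and $p$), and conclude via the continuous embedding $D(A_p^\beta)\hookrightarrow C^{1+\alpha}(\overline{\Omega})$. Your write-up is slightly more explicit about how to select the parameters, but the argument is otherwise identical.
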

\begin{proof}
	We first choose $p \in (2,\infty)$ and $\beta \in(\frac{1}{2}, 1)$ such that 
	\[
		1 + \alpha < 2\beta - \frac{2}{p}, 
	\]
	which is always possible.
	\\[0.5em]
	\Cref{lemma:Au_beta_bound} then gives us $t_0 > 0$ and $K_1 > 0$ such that
	\[
		\|A^\beta u_\eps(\cdot, t)\|_\L{p} \leq K_1
	\]
	for all $t > t_0$. Note that the $t_0$ given to us by the lemma is in fact independent of $\beta$ and $p$.
	By well-known continuous embedding property of $D(A_p^\beta)$ into $C^{1+\alpha}(\overline{\Omega})$ seen for example in \cite{GigaDomainsFractionalPowers1985} or \cite[p.~39]{DanHenryGeom}, this already implies our desired result.
\end{proof}
\noindent
As our next step, we will now use semigroup methods to prove some additional  bounds for the families $(n_\eps)_{\eps \in (0,1)}$ and $(c_\eps)_{\eps \in (0,1)}$ in a fairly standard and therefore brief fashion:
\begin{lemma}
There exists $t_0 > 0$ such that, for each $p \in (1,\infty]$, there is $C(p) > 0$ with
\[
	\|\grad c_\eps(\cdot, t) \|_\L{p} \leq C(p)
\]
for all $t > t_0$ and $\eps \in (0,1)$.
\label{lemma:grad_c_bound}
\end{lemma}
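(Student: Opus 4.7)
The plan is to represent $c_\eps$ via the variation-of-constants formula for the Neumann heat semigroup $(e^{\tau\laplace})_{\tau\geq 0}$ on $\Omega$ and then bootstrap on the Lebesgue exponent, using the uniform $L^\infty$-, $L^p$-, and $L^2$-bounds established in the previous lemmas of this section. For $p \in (1, 2]$, the claim follows immediately from the uniform $L^2$-bound on $\grad c_\eps$ contained in \Cref{lemma:things_are_small} together with Hölder's inequality, so I restrict to $p \in (2, \infty)$. Combining \Cref{lemma:basic_props}, \Cref{lemma:things_are_small}, \Cref{lemma:n_lp_bound}, and \Cref{corollary:u_linfty_bound}, I first fix $t_0 > 1$ (independent of $p$) and, for each $p \in (2,\infty)$, constants $K_1, K_p > 0$ such that
\[
\|c_\eps(\cdot, t)\|_\L{\infty} + \|u_\eps(\cdot, t)\|_\L{\infty} + \|\grad c_\eps(\cdot, t)\|_\L{2} \leq K_1 \stext{and} \|n_\eps(\cdot, t)\|_\L{p} \leq K_p
\]
hold for all $t > t_0 - 1$ and $\eps \in (0,1)$.

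Fixing an arbitrary $t_1 > t_0$ and setting $t_\star \defs t_1 - 1$, I apply $\grad$ to the variation-of-constants representation of $c_\eps$ on $(t_\star, t_1]$ to obtain
\begin{align*}
\grad c_\eps(\cdot, t_1) &= \grad e^{\laplace} c_\eps(\cdot, t_\star) - \int_{t_\star}^{t_1} \grad e^{(t_1 - s)\laplace} \bigl( (u_\eps \cdot \grad c_\eps)(\cdot, s) \bigr) \d s \\
&\quad - \int_{t_\star}^{t_1} \grad e^{(t_1 - s)\laplace} \bigl( (n_\eps f(c_\eps))(\cdot, s) \bigr) \d s,
\end{align*}
and estimate each contribution in $\L{p}$ using the standard smoothing inequality $\|\grad e^{\tau\laplace} w\|_\L{p} \leq K(1 + \tau^{-\frac{1}{2} - (\frac{1}{q} - \frac{1}{p})}) \|w\|_\L{q}$ for the Neumann heat semigroup on the two-dimensional domain $\Omega$, valid for $1 \leq q \leq p \leq \infty$ and $\tau > 0$. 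The initial-datum term is bounded by $K \|c_\eps(\cdot, t_\star)\|_\L{p} \leq K K_1 |\Omega|^{1/p}$. For the convective term I apply the smoothing estimate with $q = 2$ and use $\|(u_\eps \cdot \grad c_\eps)(\cdot, s)\|_\L{2} \leq K_1^2$, so that the remaining temporal factor $(t_1 - s)^{-(1 - 1/p)}$ is integrable on $(t_\star, t_1)$. For the reaction term I apply the estimate with $q = p$ and use $\|(n_\eps f(c_\eps))(\cdot, s)\|_\L{p} \leq \|f\|_{C^0([0, \|c_0\|_\L{\infty}])} K_p$, together with the integrable singularity $(t_1 - s)^{-1/2}$. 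Since $t_1 > t_0$ was arbitrary and the resulting bound depends on $p$ but not on $t_1$ or $\eps$, this yields the claim for every $p \in (2, \infty)$.

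The case $p = \infty$ then follows by one further bootstrap step: once the $L^q$-bound on $\grad c_\eps$ is available for some fixed $q > 2$, repeating the variation-of-constants argument with $L^q$-norms on the right-hand side produces the temporal kernel $(t_1 - s)^{-\frac{1}{2} - \frac{1}{q}}$, which remains integrable precisely because $q > 2$. The main technical obstacle throughout is ensuring that each such kernel stays integrable; this dictates the repeated careful choice of exponent $q$, and is where the two-dimensional setting leaves just enough room in the semigroup estimate to make the scheme work.
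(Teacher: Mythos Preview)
Your proof is correct and follows essentially the same route as the paper: represent $c_\eps$ via the variation-of-constants formula for the Neumann heat semigroup on an interval of length one, estimate each piece with the standard $L^q\to W^{1,p}$ smoothing bound, and then bootstrap once more to reach $p=\infty$. The only cosmetic differences are that the paper treats all finite $p$ in a single estimate (taking $q=2$ for both source terms) rather than splitting off $p\in(1,2]$ by H\"older, and that it fixes $q=4$ explicitly in the $p=\infty$ step where you allow any $q>2$.
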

\begin{proof}
	We use the variation-of-constant representation combined with semigroup smoothness estimates from \cite[Lemma 1.3]{WinklerSemigroupRegularity} to estimate the family $(c_\eps)_{\eps \in (0,1)}$ as follows:
	\begin{align*}
		&\left\|\grad c_\eps(\cdot, t)\right\|_\L{p}\\ \leq& \left\| \; \grad e^{\laplace}c_\eps(\cdot, t-1) - \int_{t-1}^t \grad e^{(t-s)\laplace}(u_\eps\cdot\grad c_\eps) \d s - \int_{t-1}^t \grad e^{(t-s)\laplace}f(c_\eps)n_\eps \d s \; \right\|_\L{p} \\
		\leq&\, K_1\|c_0\|_\L{p} + K_1\int_{t-1}^t (1+(t-s)^{-1 + \frac{1}{p}}) \left\{ \|u_\eps\|_\L{\infty} \|\grad c_\eps\|_\L{2} + \|f\|_{L^\infty([0, \|c_0\|_\L{\infty}])} \|n_\eps\|_\L{2} \right\} \d s
	\end{align*}
	with some constant $K_1 > 0$ for all $t > 1$.
	This already implies our desired result for all finite $p$ and sufficiently large $t_0 > 0$ because of \Cref{corollary:u_linfty_bound}, \Cref{lemma:things_are_small} and \Cref{lemma:n_lp_bound}. For the case $p = \infty$, we use a similar approach as before to derive
	\begin{align*}
		&\|\grad c_\eps(\cdot, t)\|_\L{\infty} \\
		\leq& \, K_2\|c_0\|_\L{\infty} + K_2\int_{t-1}^t (1+(t-s)^{-\frac{3}{4}}) \left\{ \|u_\eps\|_\L{\infty} \|\grad c_\eps\|_\L{4} + \|f\|_{L^\infty([0, \|c_0\|_\L{\infty}])} \|n_\eps\|_\L{4} \right\} \d s
	\end{align*}
	with some constant $K_2 > 0$ for all $t > 1$. Using this very result for $p = 4$ as proven above and the same lemmas as before then completes the proof.
\end{proof}

\begin{lemma} \label{lemma:n_linfty_bound}
	There exists $t_0 > 0$ and $C > 0$ such that
	\[
		\| n_\eps(\cdot, t) \|_\L{\infty} \leq C
	\]
	for all $t > t_0$ and $\eps \in (0,1)$.
\end{lemma}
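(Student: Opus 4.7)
The plan is to apply a standard variation-of-constants argument for the first equation of (\ref{approx_system}) combined with the smoothing estimates for the Neumann heat semigroup $(e^{t\Delta})_{t\geq 0}$ on $\Omega$ (cf.\ \cite{WinklerSemigroupRegularity}), using the previously established bounds as fuel. Since $\div u_\eps \equiv 0$, the convective term satisfies $u_\eps \cdot \grad n_\eps = \div(u_\eps n_\eps)$, so that the first equation in (\ref{approx_system}) may be recast as
\[
    {n_\eps}_t = \laplace n_\eps - \div\!\left( n_\eps S_\eps(x,n_\eps,c_\eps)\grad c_\eps + n_\eps u_\eps \right).
\]
Accordingly, for $t>1$ we write
\[
    n_\eps(\cdot, t) = e^{\laplace}n_\eps(\cdot, t-1) - \int_{t-1}^t e^{(t-s)\laplace}\, \div\!\left( n_\eps S_\eps \grad c_\eps + n_\eps u_\eps \right)\d s.
\]

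Next, I fix any $p>2$ (so that $\tfrac12 + \tfrac{1}{p} < 1$) and apply the well-known smoothing estimates from \cite[Lemma 1.3]{WinklerSemigroupRegularity}, namely $\|e^{\tau\laplace} f\|_\L{\infty} \leq K\, \tau^{-\frac{1}{p}} \|f\|_\L{p}$ for the first term and $\|e^{\tau\laplace} \div v\|_\L{\infty} \leq K\,(1 + \tau^{-\frac{1}{2}-\frac{1}{p}})\|v\|_\L{p}$ for the integrand. The first contribution is controlled by $K\|n_\eps(\cdot, t-1)\|_\L{p}$, which is uniformly bounded from some time $t_0 > 1$ onward by \Cref{lemma:n_lp_bound}. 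For the flux term inside the integral, I bound
\[
    \|n_\eps S_\eps \grad c_\eps\|_\L{p} \leq S_0(\|c_0\|_\L{\infty})\, \|n_\eps\|_\L{2p}\, \|\grad c_\eps\|_\L{2p}
    \stext{and} \|n_\eps u_\eps\|_\L{p} \leq \|u_\eps\|_\L{\infty}\, \|n_\eps\|_\L{p},
\]
where each of the four norms on the right-hand sides is uniformly bounded in $\eps$ and in $t$ past some large enough time thanks to \Cref{lemma:n_lp_bound}, \Cref{lemma:grad_c_bound} and \Cref{corollary:u_linfty_bound} (whose $L^\infty$ bound for $u_\eps$ is what we use here). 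The singular kernel $(t-s)^{-\frac12-\frac{1}{p}}$ is integrable on $(t-1, t)$ precisely by the choice $p>2$.

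Combining these estimates yields a uniform constant $C>0$ with $\|n_\eps(\cdot, t)\|_\L{\infty} \leq C$ for all sufficiently large $t$ and all $\eps\in(0,1)$, which is the assertion. The argument is entirely routine given the previous lemmas; the only step requiring any real care is picking $p>2$ so that the singularity in the heat-semigroup gradient estimate remains integrable, and verifying that the quadratic flux product $n_\eps \grad c_\eps$ can still be controlled in $L^p$ via a H\"older split into $L^{2p}\times L^{2p}$, both factors of which are indeed uniformly bounded thanks to the $L^p$ theory established in \Cref{lemma:n_lp_bound} and \Cref{lemma:grad_c_bound}.
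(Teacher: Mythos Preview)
Your proof is correct and follows essentially the same route as the paper: variation-of-constants over $(t-1,t)$, rewriting $u_\eps\cdot\nabla n_\eps=\div(u_\eps n_\eps)$ via $\div u_\eps=0$, and then applying the Neumann heat semigroup smoothing estimates from \cite{WinklerSemigroupRegularity} together with \Cref{lemma:n_lp_bound}, \Cref{lemma:grad_c_bound} and \Cref{corollary:u_linfty_bound}. The only cosmetic difference is the choice of exponents (the paper takes $p=4$ and an $L^8\times L^8$ H\"older split, you take generic $p>2$ and $L^{2p}\times L^{2p}$), which is immaterial.
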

\begin{proof}
	Using the variation-of-constants representation applied to the family $(n_\eps)_{\eps \in (0,1)}$ and combining it with semigroup smoothness estimates from \cite[Lemma 1.3]{WinklerSemigroupRegularity} yields that
	\begin{align*}
		&\left\|n_\eps(\cdot, t)\right\|_\L{\infty} \\
		=&\left \| \; e^{\laplace}n_\eps(\cdot, t - 1) - \int_{t-1}^t  e^{(t-s)\laplace} (u_\eps \cdot \grad n_\eps)  \d s - \int_{t-1}^t e^{(t-s)\laplace} \div (n_\eps S_\eps(\cdot, n_\eps, c_\eps) \grad c_\eps) \d s \; \right\|_\L{\infty}
		 \\
		=&\left \| \; e^{\laplace}n_\eps(\cdot, t - 1) - \int_{t-1}^t  e^{(t-s)\laplace} \div (u_\eps n_\eps)  \d s - \int_{t-1}^t e^{(t-s)\laplace} \div (n_\eps S_\eps(\cdot, n_\eps, c_\eps) \grad c_\eps) \d s \; \right\|_\L{\infty} \\
		\leq&\,  K_1\|n_\eps(\cdot, t - 1)\|_\L{p} +
		\\ & \;\;\;\;\; K_1 \int_{t-1}^t (1 + (t-s)^{-\frac{3}{4}}) \left\{ \; \| u_\eps\|_\L{\infty}\|n_\eps\|_\L{4} + S_0(\|c_0\|_\L{\infty})\|n_\eps\|_\L{8}   \|\grad c_\eps\|_\L{8}  \; \right\}  \d s
	\end{align*}
	with some constant $K_1 > 0$ for all $t > 1$ because $\div u_\eps = 0$. Combining this with \Cref{corollary:u_linfty_bound}, \Cref{lemma:n_lp_bound} and \Cref{lemma:grad_c_bound} then gives us the desired bound by similar arguments as in the previous lemma.
\end{proof}
\noindent One immediate consequence of this lemma is an additional global space time integrability property for the gradients of $n_\eps$. This property will prove useful when later arguing that $n$ is in fact a weak solution of its associated differential equation as a step in the process of proving its more classical solution properties.
\begin{lemma}\label{lemma:grad_n_integrability}
	There exists $t_0 > 0$ and $C > 0$ such that
	\[
		\int_{t_0}^\infty \int_\Omega |\grad n_\eps|^2 \leq C
	\]
	for all $\eps \in (0,1)$.
\end{lemma}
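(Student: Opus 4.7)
The plan is to exploit the simple identity $|\grad n_\eps|^2 = \frac{|\grad n_\eps|^2}{n_\eps} \cdot n_\eps$ together with two a priori bounds that are already in hand: the eventual uniform $L^\infty$ bound on $(n_\eps)_{\eps\in(0,1)}$ from \Cref{lemma:n_linfty_bound}, and the eventual global space-time smallness of $\int\!\int \frac{|\grad n_\eps|^2}{n_\eps}$ from \Cref{lemma:things_are_small}.

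More precisely, I would first invoke \Cref{lemma:n_linfty_bound} to pick $t_1 > 0$ and $K_1 > 0$ such that $\|n_\eps(\cdot,t)\|_{L^\infty(\Omega)} \leq K_1$ for all $t > t_1$ and all $\eps\in(0,1)$. Next, applying \Cref{lemma:things_are_small} (with, say, $\delta = 1$) yields $t_2 > 0$ and $K_2 > 0$ (in fact one may take $K_2 = 1$) such that
\[
\int_{t_2}^\infty \int_\Omega \frac{|\grad n_\eps|^2}{n_\eps} \leq K_2 \qquad \text{for all } \eps \in (0,1).
\]
Setting $t_0 \defs \max(t_1, t_2)$, we then estimate
\[
\int_{t_0}^\infty \int_\Omega |\grad n_\eps|^2 \;=\; \int_{t_0}^\infty \int_\Omega \frac{|\grad n_\eps|^2}{n_\eps} \, n_\eps \;\leq\; K_1 \int_{t_0}^\infty \int_\Omega \frac{|\grad n_\eps|^2}{n_\eps} \;\leq\; K_1 K_2
\]
uniformly in $\eps \in (0,1)$, which is exactly the claim with $C \defs K_1 K_2$.

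There is essentially no obstacle here; all the hard work has been done in \Cref{lemma:things_are_small} (which in turn rests on the functional inequalities of \Cref{theorem:func_ineq}) and \Cref{lemma:n_linfty_bound}. The only mild point to watch is that $t_0$ must be chosen large enough for both bounds to be in force simultaneously, which is handled by taking the maximum of the two times.
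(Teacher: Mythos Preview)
Your argument is correct and, in fact, more direct than the route taken in the paper. The paper does not use the identity $|\grad n_\eps|^2 = n_\eps \cdot \tfrac{|\grad n_\eps|^2}{n_\eps}$; instead it tests the first equation in (\ref{approx_system}) with $n_\eps$ and applies Young's inequality to obtain
\[
	\frac{1}{2}\frac{\d}{\d t}\int_\Omega n_\eps^2 \leq -\frac{1}{2}\int_\Omega |\grad n_\eps|^2 + \frac{S_0^2(\|c_0\|_\L{\infty})}{2}\,\|n_\eps\|_\L{\infty}^2 \int_\Omega |\grad c_\eps|^2,
\]
after which time integration, together with the $L^\infty$ bound from \Cref{lemma:n_linfty_bound} and the integrability of $\int\!\int |\grad c_\eps|^2$ from \Cref{lemma:basic_props}, yields the conclusion. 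Both proofs rely on \Cref{lemma:n_linfty_bound}; your version replaces the testing step by a direct pointwise estimate and draws on the dissipation bound $\int_{t_0}^\infty\!\int_\Omega \tfrac{|\grad n_\eps|^2}{n_\eps} \leq \delta$ already established in \Cref{lemma:things_are_small}, which makes for a shorter argument. The paper's approach has the minor advantage of only needing the elementary estimate (\ref{eq:grad_c_bound}) rather than the more involved \Cref{lemma:things_are_small}, but since both ingredients are available at this point in the text, your shortcut is perfectly legitimate.
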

\begin{proof}
	Due to \Cref{lemma:n_linfty_bound}, there exist $t_0 > 0$ and $K_1 > 0$ such that
	\begin{equation}\label{eq:grad_n_linfty_bound}
		\|n_\eps(\cdot, t)\|_\L{\infty} \leq K_1 \;\;\;\; \text{ for all } t \geq t_0 \text{ and } \eps \in (0,1). 
	\end{equation}
	Testing the first equation in (\ref{approx_system}) with $n_\eps$ in a similar fashion as in the proof of \Cref{lemma:n_lp_bound} and applying Young's inequality immediately yields
	\[
		\frac{1}{2} \frac{\d}{\d t} \int_\Omega n^2_\eps \leq - \frac{1}{2}\int_\Omega |\grad n_\eps|^2 + \frac{S_0^2(\|c_0\|_\L{\infty})}{2} \int_\Omega |\grad c_\eps|^2 n_\eps^2 \leq - \frac{1}{2}\int_\Omega |\grad n_\eps|^2 + K_2 \int_\Omega |\grad c_\eps|^2
	\]
	with $K_2 \defs \frac{1}{2}K_1^2 S_0^2(\|c_0\|_\L{\infty})$ for all $t > t_0$ and $\eps \in (0,1)$. Time integration combined with \Cref{lemma:basic_props} and (\ref{eq:grad_n_linfty_bound}) then directly gives us our desired result.
\end{proof}

\subsection{Establishing baseline parabolic Hölder bounds for $n_\eps$, $c_\eps$ and $u_\eps$}

As our next step in the journey towards a proof of \Cref{theorem:eventual_smoothness}, we will now transition from only establishing uniform space bounds for the solution components of our approximate solutions to full parabolic Hölder-type bounds. Establishing such bounds  will then allow us to use the well-known Arzelà--Ascoli compact embedding theorem to argue that, at least from some large time onward, the generalized solutions constructed in \Cref{theorem:ext_existence} were in fact of a similarly high level of regularity.
\\[0.5em]
We will start by establishing a $C^{\alpha}([t,t+1];C^{1+\alpha}(\overline{\Omega}))$-type bound for the family $(u_\eps)_{\eps \in (0,1)}$ as $u_\eps$ plays a role in all three equations of (\ref{approx_system}) due to the convection terms. While the step from the bounds already established in the previous sections to uniform Hölder bounds is often taken care of by employing well-known and ready-made parabolic regularity theory, we are not aware of such a result that fits the third equation in (\ref{approx_system}) and gives us the type of bound desired here. 
\\[0.5em]
Similar to the methods seen in e.g.\ \cite{FriedmanPDE} and \cite[Lemma 3.4]{MR3801284}, we will therefore use a different approach that is based on the regularity properties of the fractional powers of the Stokes operator and the variation-of-constants representation of the family $(u_\eps)_{\eps \in (0,1)}$, not unlike what we did in the proof of \Cref{lemma:Au_beta_bound}. The key difference to similar previous efforts in this paper is that we apply these methods to difference terms of the form $u_\eps(\cdot, t_2) - u_\eps(\cdot, t_1)$ instead of $u_\eps$ itself.
\\[0.5em]
We begin by proving an analogue to \Cref{lemma:Au_beta_bound} for such difference terms.

\begin{lemma}\label{lemma:time_hoelder_proto}
There exists $t_0 > 0$ such that, for each $\beta \in (0,1)$, $p\in (2,\infty)$, there is a constant $C(\beta, p) > 0$ such that
\[
	\|A^\beta \left[\, u_\eps(\cdot, t_2) - u_\eps(\cdot, t_1) \,\right] \|_\L{p} \leq C(\beta, p) (t_2 - t_1)^\frac{1-\beta}{2}
\]
for all $t_2 > t_1 > t_0$ with $t_2 - t_1 < 1$ and $\eps \in (0,1)$.
\end{lemma}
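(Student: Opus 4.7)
The plan is to base the argument on the variation-of-constants representation of the projected third equation in (\ref{approx_system}) with reference time $t_1$, writing
\[
u_\eps(\cdot, t_2) - u_\eps(\cdot, t_1) = \bigl(e^{-(t_2-t_1)A} - I\bigr) u_\eps(\cdot, t_1) + \int_{t_1}^{t_2} e^{-(t_2-s)A} \mathcal{P}\bigl[ -(u_\eps \cdot \grad) u_\eps + n_\eps \grad \phi \bigr](\cdot, s) \d s.
\]
Applying $A^\beta$ and estimating the two terms in $\L{p}$ separately, the desired Hölder exponent $\frac{1-\beta}{2}$ should fall out naturally. As the starting time $t_0$ for this lemma, I would take the maximum of the times produced by \Cref{lemma:Au_beta_bound}, \Cref{lemma:n_linfty_bound} and \Cref{corollary:u_linfty_bound}, so that all the uniform bounds from the previous section are available.

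For the first term, I would use the well-known semigroup identity $(e^{-\tau A} - I)v = -\int_0^\tau A e^{-sA} v \d s$ together with the smoothing estimate $\|A^{1-\gamma} e^{-sA} w\|_\L{p} \leq C s^{\gamma - 1} \|w\|_\L{p}$ (cf.\ \cite{DanHenryGeom}, \cite[Lemma III.2.2.1]{MR1928881}) to obtain the standard bound
\[
\bigl\|\bigl(e^{-\tau A} - I\bigr)w\bigr\|_\L{p} \leq C \tau^{\gamma} \|A^{\gamma} w\|_\L{p} \stext{for all} w \in D(A_p^{\gamma}), \ \gamma\in(0,1).
\]
Applied to $w \defs A^\beta u_\eps(\cdot, t_1)$ with $\gamma \defs \frac{1-\beta}{2}$, this gives control by $C (t_2-t_1)^{(1-\beta)/2} \|A^{(1+\beta)/2} u_\eps(\cdot, t_1)\|_\L{p}$, and since $\frac{1+\beta}{2} \in (\tfrac{1}{2}, 1)$, the latter factor is uniformly bounded thanks to \Cref{lemma:Au_beta_bound}.

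For the second term, I would apply $A^\beta$ under the integral and use the standard smoothing estimate $\|A^\beta e^{-(t_2-s)A} \mathcal{P} h\|_\L{p} \leq C (t_2-s)^{-\beta} \|h\|_\L{p}$ together with uniform $\L{p}$-boundedness of the nonlinearity. Indeed,
\[
\bigl\| (u_\eps \cdot \grad) u_\eps + n_\eps \grad\phi \bigr\|_\L{p} \leq \|u_\eps\|_\L{\infty} \|\grad u_\eps\|_\L{p} + \|n_\eps\|_\L{p}\|\grad \phi\|_\L{\infty}
\]
is uniformly bounded from $t_0$ onward by \Cref{corollary:u_linfty_bound}, the embedding $D(A_p^{(1+\beta)/2}) \hookrightarrow W^{1,p}(\Omega)$ combined with \Cref{lemma:Au_beta_bound}, and \Cref{lemma:n_lp_bound} together with (\ref{phi_regularity}). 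The resulting integral $\int_{t_1}^{t_2}(t_2-s)^{-\beta}\d s$ evaluates to $\frac{(t_2-t_1)^{1-\beta}}{1-\beta}$, and because $t_2 - t_1 < 1$ and $\tfrac{1-\beta}{2} \leq 1-\beta$, this is controlled by $C(t_2-t_1)^{(1-\beta)/2}$, which matches the exponent from the first term.

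I do not expect any real obstacle here beyond carefully bookkeeping the parameter ranges: the core observation is simply that the bounds already derived in \Cref{lemma:Au_beta_bound}, \Cref{lemma:n_lp_bound} and \Cref{corollary:u_linfty_bound} are strong enough that the standard analytic-semigroup machinery from \cite{DanHenryGeom} and \cite{MR1928881} applies essentially off the shelf. The mildly delicate point is choosing $\gamma = \tfrac{1-\beta}{2}$ (rather than, say, $\gamma = 1-\beta$) so that $\gamma+\beta = \tfrac{1+\beta}{2}$ lands inside the admissible range $(\tfrac{1}{2},1)$ of \Cref{lemma:Au_beta_bound} for every $\beta \in (0,1)$; this is what forces precisely the exponent $\tfrac{1-\beta}{2}$ stated in the lemma.
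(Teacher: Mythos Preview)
Your proof is correct and in fact somewhat cleaner than the paper's. The paper fixes the reference time in the Duhamel formula at the base time $t_0$ rather than at $t_1$; this forces the integral part to split into a short piece over $(t_1,t_2)$ and a long piece over $(t_0,t_1)$, the latter requiring an additional application of the fundamental theorem of calculus to the semigroup difference $e^{-(t_2-s)A}-e^{-(t_1-s)A}$. By basing the representation at $t_1$ instead, you collapse the integral part to a single term over $(t_1,t_2)$, handled directly by the smoothing estimate and yielding $(t_2-t_1)^{1-\beta}\leq (t_2-t_1)^{(1-\beta)/2}$. Your treatment of the first term is essentially identical to the paper's treatment of its $D_1$: both rewrite the semigroup increment as $\int A e^{-sA}\,\d s$ and invoke the bound on $\|A^{(1+\beta)/2}u_\eps\|_\L{p}$ from \Cref{lemma:Au_beta_bound}. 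The only cosmetic difference in the second term is that the paper bounds $\|\grad u_\eps\|_\L{\infty}$ directly via \Cref{corollary:u_linfty_bound}, whereas you go through the embedding $D(A_p^{(1+\beta)/2})\hookrightarrow W^{1,p}(\Omega)$; either works.
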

\begin{proof}
We fix $\beta \in (0,1)$ and $p \in (2,\infty)$. We then further fix $t_0 > 0$ independently of $\beta$ and $p$ and $K_1 > 0$ such that
\begin{equation*}
	\|n_\eps(\cdot, t)\|_\L{p} \leq K_1, \;\; \|u_\eps(\cdot, t)\|_{L^\infty(\Omega)} \leq K_1, \;\; \|\grad u_\eps(\cdot, t)\|_{L^\infty(\Omega)} \leq K_1, \;\; \|A^\frac{\beta + 1}{2} u_\eps(\cdot, t)\|_{L^p(\Omega)} \leq K_1
\end{equation*}
for all $t \geq t_0$ and $\eps \in (0,1)$ according to \Cref{lemma:n_lp_bound}, \Cref{corollary:u_linfty_bound} and \Cref{lemma:Au_beta_bound}. This then implies for 
\[
	F_\eps(x,t) \defs \mathcal{P} \left[\, - (u_\eps(x,t)\cdot \grad) u_\eps(x,t) + \grad \phi(x) \cdot n_\eps(x,t) \, \right] \;\;\;\; \text{ for all } (x,t) \in \Omega\times[0,\infty) 
\]
that
\begin{align*}
	\|F_\eps(\cdot, t)\|_\L{p} 
	&\leq K_2 \left\| -(u_\eps(\cdot,t)\cdot \grad) u_\eps(\cdot,t) + \grad \phi \cdot n_\eps(\cdot,t) \, \right\|_\L{p} \\
	&\leq K_2|\Omega|^{\frac{1}{p}}\|u_\eps(\cdot, t)\|_\L{\infty}\|\grad u_\eps(\cdot, t)\|_\L{\infty} + K_2 \|\grad \phi\|_\L{\infty}\|n_\eps(\cdot, t)\|_\L{p} \\
	&\leq |\Omega|^\frac{1}{p}K_1^2 K_2 + K_1 K_2\|\grad \phi\|_\L{\infty} \feds K_3
\end{align*}
for all $t > t_0$ and $\eps \in (0,1)$ with some constant $K_2 > 0$ due to the continuity of the Helmholtz projection $\mathcal{P}_p$.
\\[0.5em]
Let now $t_2 > t_1 > t_0$ be such that $t_2 - t_1 < 1$. Using the variation-of-constants representation of $u_\eps$ with respect to the semigroup $(e^{-tA})_{t\geq 0}$, we then observe that
\begin{align*}
	&\left\|A^\beta \left[\, u_\eps(\cdot, t_2) - u_\eps(\cdot, t_1)\,\right]\right\|_\L{p} \\
	\leq& \left\| A^\beta e^{-(t_2-t_0) A}u(\cdot, t_0) - A^\beta e^{-(t_1-t_0)A}u(\cdot, t_0)  \right\|_\L{p} \\ +& \left\|\int_{t_0}^{t_2}A^{\beta} e^{-(t_2 - s)A}F_\eps(\cdot, s) \d s - \int_{t_0}^{t_1}A^{\beta} e^{-(t_1 - s)A}F_\eps(\cdot, s) \d s \right\|_\L{p} \\
	\feds& \;D_1 + D_2 \;\;\;\; \text{ for all } \eps \in (0,1).
\end{align*}
Using well-known smoothing properties of the Stokes semigroup (cf.\ \cite[Theorem 1.4.3]{DanHenryGeom}) combined with the defining fact that 
\[	
\tfrac{\d}{\d t} e^{-tA}\varphi = -A e^{-tA}\varphi \;\;\;\; \text{ for all } t > 0 \text{ and } \varphi \in C^2(\overline{\Omega}) \text{ with } u = 0 \text{ on }\partial\Omega \text{ and }\div u = 0 \text{ on } \Omega
\]
and the fundamental theorem of calculus, we now estimate $D_1$ as follows:
\begin{align*}
	D_1 &= \left\| A^\beta \int_{t_1}^{t_2} Ae^{-(s-t_0)A}u(\cdot, t_0) \d s \, \right\|_\L{p}
	= \left\| \int_{t_1}^{t_2} A^{\frac{\beta + 1}{2}}e^{-(s-t_0)A}A^\frac{\beta + 1}{2}u(\cdot, t_0) \d s \,  \right\|_\L{p} \\
	&\leq K_4 \int_{t_1}^{t_2} (s-t_0)^{-\frac{\beta + 1}{2}} \|A^{\frac{\beta + 1}{2}}u(\cdot, t_0)\|_\L{p} \d s \leq K_1 K_4 \int_{t_1}^{t_2} (s-t_0)^{-\frac{\beta + 1}{2}} \d s  \\
	&= \frac{2 K_1 K_4}{1-\beta} ((t_2-t_0)^\frac{1 - \beta}{2} - (t_1-t_0)^\frac{1-\beta}{2} ) \leq K_5 (t_2 - t_1 )^\frac{1-\beta}{2} \label{eq:holder_d1_estimate} \;\;\;\; \text{ for all } \eps \in (0,1)\numberthis
\end{align*}
with $K_4 > 0$ being the smoothing constant from \cite[Theorem 1.4.3]{DanHenryGeom} and $K_5 \defs \frac{2 K_1 K_4}{1-\beta}$.
\\[0.5em]
By a similar argument, we gain for $D_2$ that
\begin{align*}
	D_2 &\leq K_6\int_{t_1}^{t_2} (t_2-s)^{-\beta} \|F_\eps(\cdot, s)\|_\L{p} \d s + \left\|\int_{t_0}^{t_1} A^{\beta}\left[e^{-(t_2 - s)A} - e^{-(t_1 - s)A} \right]F_\eps(\cdot, s) \d s \right\|_\L{p} \\
	&\leq \frac{K_3 K_6}{1-\beta}  (t_2 - t_1)^{1-\beta} +  \left\|\int_{t_0}^{t_1} A^\beta \int_{t_1}^{t_2} A e^{-(\sigma - s)A} F_\eps(\cdot, s) \d \sigma \d s \right\|_\L{p} \\
	&\leq \frac{K_3 K_6}{1-\beta}  (t_2 - t_1)^{1-\beta} +  K_3 K_6 \int_{t_0}^{t_1}\int_{t_1}^{t_2} (\sigma - s)^{-1 - \beta}  \d \sigma \d s \\
	&= \frac{K_3 K_6}{1-\beta}  (t_2 - t_1)^{1-\beta} - \frac{K_3 K_6}{\beta} \int_{t_0}^{t_1} (t_2 - s)^{-\beta} - (t_1 - s)^{-\beta}   \d s \\
	&=  \frac{K_3 K_6}{1-\beta}  (t_2 - t_1)^{1-\beta}  + \frac{K_3 K_6}{\beta(1 - \beta)} \left[ (t_2 - t_1)^{1-\beta} - (t_2 - t_0)^{1-\beta} - (t_1 - t_1)^{1-\beta} + (t_1 - t_0)^{1-\beta} \right] \\
	&\leq K_7 (t_2 - t_1)^{1-\beta}  \label{eq:holder_d2_estimate} \numberthis
\end{align*}
for all $\eps \in (0,1)$ with $K_6 > 0$ being the smoothing constant from \cite[Theorem 1.4.3]{DanHenryGeom} and $K_7 \defs K_3 K_6\frac{1+\beta}{\beta(1-\beta)}$. Note here that the last step was made possible by the fact that $(t_2-t_0)^{1-\beta} \geq (t_1-t_0)^{1-\beta}$. 
\\[0.5em]
Because $t_2 - t_1 < 1$, we further know that
\[
	(t_2 - t_1)^{1-\beta} = (t_2 - t_1)^{\frac{1-\beta}{2}} (t_2 - t_1)^{\frac{1-\beta}{2}} \leq (t_2 - t_1)^{\frac{1-\beta}{2}}.
\] 
Therefore the two estimates (\ref{eq:holder_d1_estimate}) and (\ref{eq:holder_d2_estimate}) complete the proof.
\end{proof}
\noindent By again using similar methods to the proof of \Cref{corollary:u_linfty_bound} as well as using said corollary itself, we can now derive our desired parabolic Hölder bound for the third solution component $u_\eps$.
\begin{corollary}\label{corollary:time_hoelder}
	There exists $t_0 > 0$ such that, for each $\alpha \in (0,\frac{1}{5})$, there is a constant $C(\alpha) > 0$ with
	\[
	\|u_\eps\|_{C^\alpha([t,t+1]; C^{1+\alpha}(\overline{\Omega}))} \leq C(\alpha)
	\]
	for all $t > t_0$ and $\eps \in (0,1)$.
\end{corollary}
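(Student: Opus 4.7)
The plan is to split the norm $\|u_\eps\|_{C^\alpha([t,t+1];C^{1+\alpha}(\overline{\Omega}))}$ into its two constituent pieces: the spatial part $\sup_{s\in[t,t+1]}\|u_\eps(\cdot,s)\|_{C^{1+\alpha}(\overline\Omega)}$ and the time Hölder part
\[
	\sup_{\substack{t\leq t_1 < t_2 \leq t+1}} \frac{\|u_\eps(\cdot,t_2)-u_\eps(\cdot,t_1)\|_{C^{1+\alpha}(\overline\Omega)}}{(t_2-t_1)^\alpha}.
\]
The first piece is handled directly by \Cref{corollary:u_linfty_bound}, which already furnishes a uniform bound from some $t_0 > 0$ onward for any $\alpha \in (0,1)$. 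Hence everything reduces to controlling the time Hölder seminorm.

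For the time Hölder seminorm, I would feed \Cref{lemma:time_hoelder_proto} into the continuous embedding $D(A_p^\beta) \hookrightarrow C^{1+\alpha}(\overline\Omega)$ (see \cite{GigaDomainsFractionalPowers1985} or \cite[p.~39]{DanHenryGeom}), which is valid as soon as $2\beta - \tfrac{2}{p} > 1+\alpha$. This gives
\[
	\|u_\eps(\cdot,t_2)-u_\eps(\cdot,t_1)\|_{C^{1+\alpha}(\overline\Omega)} \leq K \|A^\beta[u_\eps(\cdot,t_2)-u_\eps(\cdot,t_1)]\|_{L^p(\Omega)} \leq K\,C(\beta,p)\,(t_2-t_1)^{\frac{1-\beta}{2}}
\]
for all $t_2 > t_1 > t_0$ with $t_2 - t_1 < 1$. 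To obtain a $(t_2-t_1)^\alpha$ bound we need $\frac{1-\beta}{2} \geq \alpha$, i.e.\ $\beta \leq 1-2\alpha$.

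The parameter balance is therefore
\[
	\tfrac{1+\alpha}{2} + \tfrac{1}{p} < \beta \leq 1 - 2\alpha,
\]
which is solvable for some $\beta\in(0,1)$ and some $p\in(2,\infty)$ precisely when $\tfrac{1+\alpha}{2} + \tfrac{1}{p} < 1-2\alpha$, i.e.\ when $\alpha < \tfrac{1}{5}(1 - \tfrac{2}{p})$. Since $p$ may be taken arbitrarily large, any $\alpha\in(0,\tfrac{1}{5})$ is admissible, and the hypothesis $t_2-t_1<1$ of \Cref{lemma:time_hoelder_proto} is automatically satisfied on the interval $[t,t+1]$ once one picks, say, $t_1 < t_2$ strictly (equal times being trivial). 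There is no genuine obstacle here — the argument is essentially a routine parameter choice — but one has to verify the embedding constants do not depend on $\eps$, which follows because $A_p$ is a fixed operator on the fixed domain $\Omega$. Combining this time Hölder bound with the spatial bound from \Cref{corollary:u_linfty_bound} and taking $t_0$ as the larger of the two times provided by the preceding two results completes the proof.
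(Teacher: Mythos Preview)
Your proposal is correct and follows essentially the same approach as the paper: both split the norm into the spatial $C^{1+\alpha}$ bound (handled by \Cref{corollary:u_linfty_bound}) and the time H\"older seminorm (handled by \Cref{lemma:time_hoelder_proto} combined with the embedding $D(A_p^\beta)\hookrightarrow C^{1+\alpha}(\overline\Omega)$), with the same parameter balance $2\beta-\tfrac{2}{p}>1+\alpha$ and $\tfrac{1-\beta}{2}\geq\alpha$. The paper simply makes the explicit choice $\beta=1-2\alpha$ rather than arguing solvability of the range, but this is a cosmetic difference.
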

\begin{proof}
	Let $\beta \defs 1 - 2\alpha \in (\frac{3}{5},1)$ and $p \in (2,\infty)$ be such that
	\[
		1 + \alpha < 2\beta - \frac{2}{p},
	\] 
	which is always possible because $1+\alpha < \frac{6}{5} < 2\beta$.
	Then \Cref{corollary:u_linfty_bound}  and \Cref{lemma:time_hoelder_proto} give us a parameter independent time $t_0 > 0$ and constant $K_1 > 0$ such that
	\begin{equation}	
		\|u_\eps(\cdot, t_1) \|_{C^{1+\alpha}(\overline{\Omega})} \leq K_1 \label{eq:space_hoelder}
	\end{equation}
	and
	\[
		\|A^\beta \left[\, u_\eps(\cdot, t_2) - u_\eps(\cdot, t_1) \,\right] \|_\L{p} \leq K_1 (t_2 - t_1)^\alpha
	\]
	for all $\eps \in (0,1)$ and $t_2 > t_1 > t_0$ with $t_2 - t_1 < 1$, which by the continuous embedding of $D(A^\beta_p)$ into $C^{1+\alpha}(\overline{\Omega})$ (cf.\ \cite[p.~39]{DanHenryGeom} or \cite{GigaDomainsFractionalPowers1985}) implies that
	\begin{equation}
		\|u_\eps(\cdot, t_2) - u_\eps(\cdot, t_1) \|_{C^{1+\alpha}(\overline{\Omega})} \leq K_1 K_2 (t_2 - t_1)^\alpha \label{eq:time_hoelder}
	\end{equation}
	for all $\eps \in (0,1)$ with some $K_2 > 0$. Combining (\ref{eq:space_hoelder}) and (\ref{eq:time_hoelder}) then implies the desired result.
\end{proof}
\noindent
To now prove similar, albeit slightly weaker, parabolic Hölder bounds for the first two solution components of our approximate solutions, we will employ the ready-made parabolic regularity theory of \cite{PorzioVespriHoelder} to the first two equations in (\ref{approx_system}) to get a similar uniform Hölder bound.
\begin{lemma} \label{lemma:time_hoelder_2}
	There exists $t_0 > 0$, $\alpha \in (0,1)$ and $C > 0$ such that
	\[
		\|n_\eps\|_{C^{\alpha, \frac{\alpha}{2}}(\overline{\Omega}\times[t,t+1])} + \|c_\eps\|_{C^{\alpha, \frac{\alpha}{2}}(\overline{\Omega}\times[t,t+1])} \leq C
	\]
	for all $t > t_0$ and $\eps \in (0,1)$.
\end{lemma}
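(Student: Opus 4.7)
The plan is to cast both the $n_\eps$- and $c_\eps$-equations into a quasilinear divergence-form framework with uniformly bounded coefficients and forcing terms from some time $t_0 > 0$ onward, and then invoke the standard parabolic Hölder regularity theorem of \cite{PorzioVespriHoelder} on parabolic cylinders of unit length in time.

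First, I would collect all the $\eps$-uniform $L^\infty$-bounds already established from some sufficiently large time $t_0 > 0$ on, namely the ones for $\|n_\eps(\cdot,t)\|_\L{\infty}$ (\Cref{lemma:n_linfty_bound}), for $\|c_\eps(\cdot,t)\|_\L{\infty}$ (\Cref{lemma:basic_props}), for $\|\grad c_\eps(\cdot,t)\|_\L{\infty}$ (\Cref{lemma:grad_c_bound}), and for $\|u_\eps(\cdot,t)\|_\L{\infty}$ (\Cref{corollary:u_linfty_bound}). Next, using $\div u_\eps = 0$, I would rewrite the first equation in (\ref{approx_system}) as
\[
	\partial_t n_\eps \;=\; \div\!\left(\, \grad n_\eps - n_\eps S_\eps(x,n_\eps,c_\eps)\grad c_\eps - u_\eps n_\eps \,\right),
\]
which exhibits the structure $\partial_t n_\eps - \div a_\eps(x,t,n_\eps,\grad n_\eps) = 0$ with
\[
	a_\eps(x,t,n,p) \;\defs\; p - n\, S_\eps(x,n,c_\eps(x,t))\grad c_\eps(x,t) - n\, u_\eps(x,t).
\]
Because $\|S_\eps\|_\L{\infty}$ is controlled by $S_0(\|c_0\|_\L{\infty})$ and $n_\eps$, $\grad c_\eps$, $u_\eps$ are all $\eps$-uniformly bounded for $t > t_0$, Young's inequality yields the two structure conditions $a_\eps(x,t,n,p)\cdot p \geq \tfrac{1}{2}|p|^2 - K$ and $|a_\eps(x,t,n,p)| \leq |p| + K$ with a constant $K > 0$ independent of $\eps$ on $\{\,(x,n)\,:\,x\in\overline\Omega,\;|n|\leq \|n_\eps\|_\L{\infty}\,\}$. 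As the $S_\eps$ were constructed to vanish near $\partial \Omega$ (cf.\ \cite[Section 2]{MyExistence}), the no-flux boundary condition reduces to the standard homogeneous Neumann condition $\grad n_\eps \cdot \nu = 0$, which is precisely the setting covered by the results in \cite{PorzioVespriHoelder}.

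For the second equation, I would similarly write $\partial_t c_\eps - \laplace c_\eps = -u_\eps \cdot \grad c_\eps - n_\eps f(c_\eps) \feds b_\eps(x,t,\grad c_\eps)$, where $|b_\eps(x,t,p)| \leq \|u_\eps\|_\L{\infty}\,|p| + \|n_\eps\|_\L{\infty}\|f\|_{L^\infty([0,\|c_0\|_\L{\infty}])}$ with all right-hand quantities $\eps$-uniformly bounded for $t > t_0$. This again fits the Porzio--Vespri framework under the homogeneous Neumann boundary condition $\grad c_\eps \cdot \nu = 0$. Applying the main regularity theorem of \cite{PorzioVespriHoelder} on the parabolic cylinder $\overline\Omega \times [t, t+1]$ for any $t > t_0$ then produces some Hölder exponent $\alpha \in (0,1)$ and an $\eps$-independent constant $C > 0$ such that
\[
	\|n_\eps\|_{C^{\alpha, \frac{\alpha}{2}}(\overline\Omega \times [t, t+1])} + \|c_\eps\|_{C^{\alpha, \frac{\alpha}{2}}(\overline\Omega \times [t, t+1])} \leq C
\]
for all $\eps \in (0,1)$, which is the claim.

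The only mildly subtle point in this plan is the matching of our equations to the precise hypotheses in \cite{PorzioVespriHoelder}; once the uniform $L^\infty$ control of $n_\eps$, $c_\eps$, $\grad c_\eps$ and $u_\eps$ beyond $t_0$ is in hand, the actual structural verifications are routine and the degeneracy of $S_\eps$ near $\partial \Omega$ conveniently eliminates any nontrivial boundary flux.
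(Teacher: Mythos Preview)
Your proposal is correct and follows essentially the same route as the paper: collect the uniform $L^\infty$ bounds on $n_\eps$, $c_\eps$, $\grad c_\eps$, $u_\eps$ from the earlier lemmas and then apply the Porzio--Vespri Hölder regularity theorem to each equation cast in divergence form with Neumann boundary conditions. The only cosmetic difference is that for the second equation the paper uses $\div u_\eps = 0$ to write the convection term in divergence form as well (taking $a = z - u_\eps c_\eps$ and $b = -n_\eps f(c_\eps)$), whereas you leave $-u_\eps\cdot\grad c_\eps$ as part of the lower-order term $b$; both fit the Porzio--Vespri structural hypotheses once $u_\eps$ is bounded in $L^\infty$. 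One small point worth making explicit: since \cite{PorzioVespriHoelder} provides interior-in-time estimates, you should take $t_0$ large enough that the $L^\infty$ bounds hold already from, say, $t_0 - 1$ onward, so that the Hölder bound on $\overline{\Omega}\times[t,t+1]$ for $t > t_0$ is obtained from the data on the enlarged cylinder $\overline{\Omega}\times[t-1,t+1]$.
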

\begin{proof}
	As preparations for the proof, let us now fix $t_0 > 0$ and $K_1 > 0$ such that
	\begin{equation} \label{eq:bounds_needed_for_holder}
	\begin{aligned}
		\|n_\eps\|_{L^\infty(\Omega\times[t_0-1, \infty))} \leq K_1, &\;\;\;\; \|\grad c_\eps\|_{L^\infty(\Omega\times[t_0-1,\infty))} \leq K_1, \\  \|c_\eps\|_{L^\infty(\Omega\times[t_0-1,\infty))} \leq K_1, &\;\;\;\; \|u_\eps\|_{L^\infty(\Omega\times[t_0-1,\infty))} \leq K_1
	\end{aligned}
	\end{equation}
	due to \Cref{lemma:n_linfty_bound}, \Cref{lemma:grad_c_bound}, \Cref{lemma:basic_props} and \Cref{corollary:u_linfty_bound}.
	\\[0.5em]
	Let us now check that our approximate solutions conform to the prerequisites used for Theorem 1.3 in \cite{PorzioVespriHoelder}. Framed in the notation of the reference, the first two equations in (\ref{approx_system}) considered in isolation translate to 
	\begin{equation}
		\begin{aligned}
		&a(x,t,y,z) \defs z - n_\eps(x,t) S_\eps(x,n_\eps,c_\eps) \grad c_\eps(x, t) - u_\eps(x, t) n_\eps(x, t), &&\hspace{-1px}b(x,t,y,z) \defs  0,\\
		&a(x,t,y,z) \defs z \,\,\,\hphantom{ - n_\eps(x,t) S_\eps(x,n_\eps,c_\eps) \grad c_\eps(x, t)} - u_\eps(x, t) c_\eps(x, t), &&\hspace{-1px}b(x,t,y,z) \defs  -n_\eps(x,t) f(c_\eps(x,t)) \end{aligned}
		\label{eq:porzio_vespri_notation}
	\end{equation}
	for $(x,t,y,z) \in \Omega\times[t_0-1, \infty)\times\R\times\R^2$, respectively. We can then choose the parameters in the reference to be mostly constants, which only depend on $K_1$ due to (\ref{eq:bounds_needed_for_holder}) and (\ref{eq:porzio_vespri_notation}), or to be trivial. We further choose $p \defs  2$, $r\;\hat{} \defs \infty$, $q\;\hat{} \defs 2$, $\kappa_1 \defs \frac{1}{2}$. The remaining conditions to use Theorem 1.3 from \cite{PorzioVespriHoelder} are then easy to check and we can therefore apply it to the first and second equation in (\ref{approx_system}) to gain $\alpha \in (0,1), K_2 > 0$ such that
	\begin{equation*} 
		\|n_\eps\|_{C^{\alpha, \frac{\alpha}{2}}(\overline{\Omega}\times[t, t + 1])} + \|c_\eps\|_{C^{\alpha, \frac{\alpha}{2}}(\overline{\Omega}\times[t, t + 1])} \leq K_2
	\end{equation*}
	for all $t > t_0$ and $\eps \in (0,1)$. As Theorem 1.3 makes it explicit, how $K_2$ and $\alpha$ depend on the chosen parameters, it is easy to verify that both constants are in fact independent of $t$. This completes the proof.
\end{proof}

\subsection{Deriving $C^{2+\alpha, 1+\frac{\alpha}{2}}$-type parabolic Hölder regularity properties for the generalized solutions $(n,c,u)$}

\label{section:transition_to_gen}

We now transition from proving properties of the approximate solutions to the generalized solutions $(n,c,u)$ themselves. This is done mostly to mitigate problems stemming from the approximated sensitivity term $S_\eps$ when applying higher order parabolic regularity theory to the approximate solutions.
\\[0.5em]
Our first step then is to translate the uniform parabolic Hölder regularity properties of the approximate solutions to our generalized solution $(n,c,u)$ by using the well-known Arzelà--Ascoli theorem.
\begin{lemma}\label{lemma:time_hoelder_intermediate}
	There exist $t_0 > 0$, $\alpha \in (0,1)$ and $C > 0$ such that
	\begin{equation*}
	\|n\|_{C^{\alpha, \frac{\alpha}{2}}(\overline{\Omega}\times[t,t+1])} + \|c\|_{C^{\alpha, \frac{\alpha}{2}}(\overline{\Omega}\times[t,t+1])} + \|u\|_{C^\alpha([t,t+1];C^{1+\alpha}(\overline{\Omega}))} \leq C 
	\end{equation*}
	for all $t > t_0$.
\end{lemma}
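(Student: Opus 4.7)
My plan is to transfer the uniform Hölder estimates for the approximate solutions, already established in \Cref{lemma:time_hoelder_2} and \Cref{corollary:time_hoelder}, to the generalized solution $(n,c,u)$ via an Arzel\`a--Ascoli argument, leveraging the fact that $(n,c,u)$ was constructed as the (almost everywhere) pointwise limit of $(n_{\eps_j}, c_{\eps_j}, u_{\eps_j})_{j \in \N}$ along the sequence $(\eps_j)_{j \in \N}$ fixed at the end of \Cref{section:application}.

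First, I would combine \Cref{lemma:time_hoelder_2} and \Cref{corollary:time_hoelder} to obtain a common time $t_0 > 0$, a common Hölder exponent $\alpha \in (0,1)$ (shrinking the exponent from the corollary if necessary) and a constant $C > 0$ such that
\[
\|n_\eps\|_{C^{\alpha, \frac{\alpha}{2}}(\overline{\Omega}\times[t,t+1])} + \|c_\eps\|_{C^{\alpha, \frac{\alpha}{2}}(\overline{\Omega}\times[t,t+1])} + \|u_\eps\|_{C^\alpha([t,t+1];C^{1+\alpha}(\overline{\Omega}))} \leq C
\]
holds for all $t > t_0$ and $\eps \in (0,1)$, with $C$ crucially independent of $t$.

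Next, I would fix an arbitrary $t > t_0$. The uniform Hölder bounds above yield, in particular, equicontinuity and uniform boundedness of $(n_{\eps_j})_{j\in\N}$ and $(c_{\eps_j})_{j\in\N}$ on the compact set $\overline{\Omega}\times[t,t+1]$, and likewise of $(u_{\eps_j})_{j\in\N}$ viewed as a family in $C^0([t,t+1]; C^1(\overline{\Omega}))$. By the Arzel\`a--Ascoli theorem, passing to a further subsequence (not relabeled), I can assume that $n_{\eps_j} \to \tilde n$ and $c_{\eps_j} \to \tilde c$ uniformly in $C^0(\overline{\Omega}\times[t,t+1])$ and $u_{\eps_j} \to \tilde u$ in $C^0([t,t+1]; C^1(\overline{\Omega}))$. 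As uniform limits of functions in a closed Hölder ball again lie in the same ball, the triple $(\tilde n, \tilde c, \tilde u)$ satisfies the bound from the statement with the same constant $C$.

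Finally, I would invoke the construction of $(n,c,u)$ recalled in \Cref{section:application} to identify the limits: since $(n_{\eps_j}, c_{\eps_j}, u_{\eps_j})$ converges almost everywhere in $\Omega\times(0,\infty)$ to $(n,c,u)$, and uniform convergence on $\overline{\Omega}\times[t,t+1]$ implies pointwise a.e.\ convergence there, the uniqueness of pointwise a.e.\ limits forces $(\tilde n, \tilde c, \tilde u) = (n,c,u)$ a.e.\ on $\overline{\Omega}\times[t,t+1]$. After redefining $(n,c,u)$ on a null set, the claimed Hölder bound holds on $\overline{\Omega}\times[t,t+1]$, and since $C$ is $t$-independent, the same bound holds uniformly for all $t > t_0$. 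The only subtle point is the identification of the Arzel\`a--Ascoli limit with the generalized solution across different choices of $t$; this is handled by noting that the limiting subsequence can be extracted along the same original sequence $(\eps_j)_{j\in\N}$, so the various uniform limits glue together consistently and coincide with $(n,c,u)$ on the whole half-line $(t_0,\infty)$.
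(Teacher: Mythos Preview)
Your proposal is correct and follows essentially the same approach as the paper: uniform H\"older bounds for the approximate solutions from \Cref{corollary:time_hoelder} and \Cref{lemma:time_hoelder_2}, followed by a compactness argument (Arzel\`a--Ascoli for you, compact embedding of H\"older spaces with slightly smaller exponent for the paper) and identification of the limit via the known a.e.\ pointwise convergence. The only cosmetic difference is that the paper passes to a smaller exponent via compact embedding rather than invoking closedness of H\"older balls under uniform convergence as you do, which makes your version marginally sharper but not substantively different.
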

\begin{proof}
	As we already know that $(n, c, u)$ are the (almost everywhere) pointwise limits of the approximate solutions $(n_\eps, c_\eps, u_\eps)$ along a sequence $(\eps_j)_{j\in\N}$, the compact embedding properties of the Hölder spaces $C^{\beta, \frac{\beta}{2}}(\overline{\Omega}\times[t,t+1])$, $C^\beta([t,t+1];C^{1+\beta}(\overline{\Omega}))$ into similar spaces with slightly smaller parameters combined with \Cref{corollary:time_hoelder} and \Cref{lemma:time_hoelder_2} directly yield our desired result.
\end{proof}\noindent
While the weak solution properties described in \Cref{definition:weak_solution} for the second and third equation are fairly standard and therefore very accessible to ready-made regularity theory, the integral inequalities used for the first solution component $n$ in said definition are not compatible with such standard regularity results. Therefore, our second step in this section is arguing that $n$ fulfills a similar weak solution property to the other two solution components from some time $t_0 > 0$ onward due to the strong a priori bounds derived in the previous sections.
\begin{lemma}\label{lemma:n_weak_solution}
	There exists $t_0 > 0$ such that $n$ is a weak solution of the first equation in (\ref{problem}) on $[t_0,\infty)$ with Neumann boundary conditions in the sense that $n \in C^0(\overline{\Omega}\times[t_0, \infty))\cap L^2_\loc([t_0, \infty);W^{1,2}(\Omega))$ and
	\begin{equation} \label{eq:n_weak_solution_definition}
	\int_{t_0}^\infty\int_\Omega n \varphi_t + \int_\Omega n(\cdot, t_0) \varphi(\cdot, t_0) = \int_{t_0}^\infty \int_\Omega \grad n \cdot \grad \varphi - \int_{t_0}^\infty \int_\Omega nS(\cdot,n,c) \grad c \cdot \grad \varphi - \int_{t_0}^\infty \int_\Omega n (u\cdot \grad \varphi)
	\end{equation}
	holds for all $\varphi \in C_0^\infty(\overline{\Omega}\times[t_0,\infty))$.
\end{lemma}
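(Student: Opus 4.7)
The plan is to derive (\ref{eq:n_weak_solution_definition}) by passing to the limit $\eps = \eps_j \searrow 0$ in the natural weak formulation automatically satisfied by the classical approximate solutions $n_\eps$. First I would choose $t_0 > 0$ large enough that all $\eps$-uniform a priori bounds established in the preceding subsections apply on $[t_0, \infty)$: the $L^\infty$ bound on $(n_\eps)$ from \Cref{lemma:n_linfty_bound}, the global $L^2$ bound on $(\grad n_\eps)$ from \Cref{lemma:grad_n_integrability}, the $L^\infty$ bounds on $c_\eps$, $\grad c_\eps$ and $u_\eps$ from \Cref{lemma:basic_props}, \Cref{lemma:grad_c_bound} and \Cref{corollary:u_linfty_bound}, and the uniform parabolic Hölder bounds from \Cref{lemma:time_hoelder_intermediate}.

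Next, since each $n_\eps$ is a classical solution of the first equation in (\ref{approx_system}) with $\grad n_\eps \cdot \nu = 0$ on $\partial\Omega$ (recall that $S_\eps$ was constructed to vanish near $\partial\Omega$), and since $u_\eps$ is solenoidal with $u_\eps = 0$ on $\partial\Omega$, integration by parts in space and time yields the $\eps$-level analogue
\begin{align*}
&\int_{t_0}^\infty \int_\Omega n_\eps \varphi_t + \int_\Omega n_\eps(\cdot, t_0)\varphi(\cdot, t_0) \\
&\qquad = \int_{t_0}^\infty \int_\Omega \grad n_\eps \cdot \grad \varphi - \int_{t_0}^\infty \int_\Omega n_\eps S_\eps(\cdot, n_\eps, c_\eps)\grad c_\eps \cdot \grad \varphi - \int_{t_0}^\infty \int_\Omega n_\eps (u_\eps \cdot \grad \varphi)
\end{align*}
for every $\varphi \in C_0^\infty(\overline{\Omega}\times[t_0, \infty))$, the boundary contributions cancelling because $S_\eps$ vanishes near $\partial\Omega$ and $u_\eps \cdot \nu = 0$ on $\partial\Omega$.

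The main step is to pass to the limit along the sequence $\eps = \eps_j \searrow 0$ from the construction of $(n,c,u)$ in \cite{MyExistence}. The uniform Hölder bound from \Cref{lemma:time_hoelder_intermediate} combined with the a.e.\ convergence $n_{\eps_j} \to n$ and Arzelà--Ascoli upgrade the latter to uniform convergence on each $\overline{\Omega}\times[t_0, T]$, which yields $n \in C^0(\overline{\Omega}\times[t_0,\infty))$ and immediately handles the first two terms by dominated convergence together with continuity of $\varphi(\cdot, t_0)$. The $\eps$-uniform $L^2((t_0,T)\times \Omega)$ bound on $\grad n_\eps$ extracts a weakly convergent subsequence, whose weak limit must equal $\grad n$ in the distributional sense because $n_{\eps_j} \to n$ strongly in $L^2_{\loc}$; this simultaneously gives $n \in L^2_{\loc}([t_0,\infty); W^{1,2}(\Omega))$ and the convergence of $\int \grad n_\eps \cdot \grad \varphi$. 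The convective term passes to the limit by dominated convergence from the uniform $L^\infty$ bounds on $n_\eps$ and $u_\eps$, and the chemotactic term likewise, using $|S_\eps| \leq S_0(\|c_0\|_{L^\infty(\Omega)})$, the uniform $L^\infty$ bound on $\grad c_\eps$, and the pointwise convergence $S_\eps(x,n_\eps,c_\eps) \to S(x,n,c)$ guaranteed by the construction of $S_\eps$ in \cite[Section 2]{MyExistence} together with continuity of $S$.

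The only genuine subtlety is the identification of the weak $L^2$-limit of $(\grad n_\eps)$ with $\grad n$; apart from this (resolved as sketched above), every step reduces to a routine dominated or weak convergence argument riding on the uniform estimates already in place, so no new structural ingredient is required.
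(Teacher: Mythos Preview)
Your overall plan matches the paper's proof almost exactly: write the weak formulation at the $\eps$-level, invoke the uniform bounds from \Cref{lemma:n_linfty_bound}, \Cref{lemma:grad_n_integrability}, \Cref{lemma:basic_props}, \Cref{corollary:u_linfty_bound} and \Cref{lemma:time_hoelder_intermediate}, and pass to the limit term by term. The regularity statement, the first two terms, the diffusion term via weak $L^2$-convergence of $\grad n_\eps$, and the convective term via dominated convergence are all handled correctly and in the same way as the paper.

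There is, however, one genuine slip in your treatment of the chemotactic term. You propose to handle $\int n_\eps S_\eps(\cdot,n_\eps,c_\eps)\grad c_\eps\cdot\grad\varphi$ ``likewise'' by dominated convergence, citing the uniform $L^\infty$ bound on $\grad c_\eps$ and the pointwise convergence $S_\eps(x,n_\eps,c_\eps)\to S(x,n,c)$. But dominated convergence requires pointwise a.e.\ convergence of the \emph{entire} integrand, and nowhere in the preceding sections (nor in the existence construction of \cite{MyExistence}) is pointwise convergence of $\grad c_\eps$ to $\grad c$ established; the available information is only that $(n_\eps,c_\eps,u_\eps)\to(n,c,u)$ a.e.\ and that $\grad c_\eps$ is bounded in $L^\infty$ and in $L^2_\loc$. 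The $L^\infty$ bound gives you a majorant but not convergence. The paper closes this gap differently: it first shows, by dominated convergence, that $n_\eps S_\eps(\cdot,n_\eps,c_\eps)\grad\varphi \to nS(\cdot,n,c)\grad\varphi$ \emph{strongly} in $L^2$, and then pairs this with the \emph{weak} $L^2$-convergence $\grad c_\eps\rightharpoonup\grad c$ (extracted from the uniform $L^2_\loc$ bound, exactly as you did for $\grad n_\eps$). This strong--weak pairing is what makes the limit go through. Your argument is easily repaired along these lines, but as written the step does not stand.
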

\begin{proof}
	We begin by fixing $t_0 > 0$ and $K > 0$ such that
	\begin{equation}\label{eq:weak_solution_bounds}
	\begin{aligned}
	&\|n_\eps\|_{L^\infty(\Omega\times[t_0, \infty))} \leq K, \;\;\;\; &&\|n_\eps\|_{L_\loc^2([t_0, \infty);W^{1,2}(\Omega))} \leq K, \\  
	&\|c_\eps\|_{L_\loc^2([t_0, \infty);W^{1,2}(\Omega))} \leq K,	&&\|u_\eps\|_{L^\infty(\Omega\times[t_0, \infty))} \leq K
	\end{aligned}
	\end{equation}
	due to  \Cref{lemma:n_linfty_bound}, \Cref{lemma:grad_n_integrability}, \Cref{lemma:basic_props} and \Cref{corollary:u_linfty_bound}.
	Among other things, this then allows us to assume without loss of generality (by potentially choosing a subsequence) that
	\begin{equation}\label{eq:n_weak_convergence}
	\grad n_\eps \rightharpoonup \grad n\stext{and}\grad c_\eps \rightharpoonup \grad c \;\;\;\; \text{ in } L_\loc^2(\Omega\times[t_0, \infty)) \text{ as } \eps = \eps_j \searrow 0.
	\end{equation}
	This combined with \Cref{lemma:time_hoelder_intermediate} then immediately gives us that $n \in C^0(\overline{\Omega}\times[t_0, \infty))\cap L^2_\loc([t_0, \infty);W^{1,2}(\Omega))$.
	\\[0.5em]
	It is further easy to see that the approximate solutions satisfy
	\begin{align*}
	&\int_{t_0}^\infty\int_\Omega n_\eps \varphi_t + \int_\Omega n_\eps(\cdot, t_0) \varphi(\cdot, t_0) 
	\\=& \int_{t_0}^\infty \int_\Omega \grad n_\eps \cdot \grad \varphi - \int_{t_0}^\infty \int_\Omega n_\eps S_\eps(\cdot,n_\eps,c_\eps) \grad c_\eps \cdot \grad \varphi - \int_{t_0}^\infty \int_\Omega n_\eps (u_\eps\cdot \grad \varphi) \numberthis \label{eq:n_approx_weak_solution_definition}
	\end{align*}
	for all $\varphi \in C_0^\infty(\overline{\Omega}\times[t_0,\infty))$. We therefore now only need to show that all of the above integral terms converge along the sequence $(\eps_j)_{j\in\N}$ to the corresponding terms in (\ref{eq:n_weak_solution_definition}). For the first two and the last term in (\ref{eq:n_approx_weak_solution_definition}), convergence is immediately assured by the dominated convergence theorem in combination with some of the bounds from (\ref{eq:weak_solution_bounds}). The convergence of the third term in (\ref{eq:n_approx_weak_solution_definition}) follows directly from (\ref{eq:n_weak_convergence}). For the remaining fourth term in (\ref{eq:n_approx_weak_solution_definition}), consider first that (\ref{eq:weak_solution_bounds}) combined with the (almost everywhere) pointwise convergence of the approximate solutions implies that $n_\eps S_\eps(x,n_\eps, c_\eps) \grad \varphi \rightarrow n S(x,n, c)\grad \varphi$ in $L^2(\Omega\times[t_0, \infty))$ as $\eps = \eps_j \searrow 0$ due to the dominated convergence theorem and the fact that the approximate sensitivities $S_\eps$ converge to $S$ in a pointwise fashion as $\eps \searrow 0$. This then combined with the weak convergence properties from (\ref{eq:n_weak_convergence}) ensures the convergence of the last remaining term and therefore completes the proof.
\end{proof}
\noindent
Given that now $n$, $c$, and $u$ each fulfill a quite standard weak solution property for their corresponding equations and are already of fairly high regularity (cf.\ \Cref{lemma:time_hoelder_intermediate}), the last step before the final proof of this paper is to use the well-known regularity theory from \cite{ladyvzenskaja1988linear} and \cite{LiebermanRegularity} (for the first two equations) as well as \cite{MR1928881} and \cite{MR2343611} (for the third equation) combined with a standard cut-off argument to remove the influence of initial data regularity to argue that all three solution components were already bounded in some $C^{2+\alpha, 1+\frac{\alpha}{2}}(\overline{\Omega}\times[t,t+1])$ spaces. As this argument is essentially identical to the one employed in \cite[Lemma 5.7]{MR3531759}, we will not unnecessarily reiterate the relevant arguments here, but refer the reader to the literature for the final step up in terms of regularity. 
\begin{lemma} \label{lemma:highest_hoelder_norms}
	There exist $t_0 > 0$, $\alpha \in (0,1)$ and $C > 0$ such that
	\[
	\|n\|_{C^{2+\alpha, 1+\frac{\alpha}{2}}(\overline{\Omega}\times[t,t+1])} + \|c\|_{C^{2+\alpha, 1+\frac{\alpha}{2}}(\overline{\Omega}\times[t,t+1])} + \|u\|_{C^{2+\alpha, 1+\frac{\alpha}{2}}(\overline{\Omega}\times[t,t+1])} \leq C
	\]
	for all $t > t_0$. 
\end{lemma}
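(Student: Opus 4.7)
The plan is to upgrade the baseline bounds from \Cref{lemma:time_hoelder_intermediate} by viewing each of $n$, $c$, $u$ as the solution of a \emph{linear} parabolic problem whose coefficients and source terms now have known Hölder regularity, and then to invoke the classical Schauder theory of \cite{ladyvzenskaja1988linear, LiebermanRegularity} for the scalar equations and of \cite{MR1928881, MR2343611} for the Stokes system. Since we only care about regularity on strips $[t,t+1]$ with $t$ bounded away from $t_0$, I would first introduce a smooth temporal cut-off $\chi\in C^\infty(\R)$ that vanishes on $(-\infty, t-\tfrac12]$ and equals $1$ on $[t,\infty)$, and replace $n, c, u$ by $\chi n, \chi c, \chi u$. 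The cut-off versions still satisfy the corresponding equations on $[t-\tfrac12, t+2]$, but with homogeneous initial data and with an extra right-hand side $\chi'\cdot(\cdot)$ of the same Hölder regularity as the original function. This trivializes the compatibility issues at the initial time slice and turns the question into a pure right-hand-side regularity problem.

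Next I would bootstrap in three rounds, using the weak formulations from \Cref{definition:weak_solution} and \Cref{lemma:n_weak_solution} to justify applying linear theory. First, for $c$ we rewrite its equation as a linear heat equation with Neumann data whose right-hand side $-nf(c)-u\cdot\nabla c$ lies in $C^{\alpha,\alpha/2}$ by \Cref{lemma:time_hoelder_intermediate} and the $C^1$-regularity of $f$; Schauder theory then yields a uniform $C^{2+\alpha,1+\alpha/2}$-bound for $\chi c$. Second, for $u$ the forcing $n\grad\phi$ lies in $C^{\alpha,\alpha/2}$, as does the convective term $(u\cdot\grad)u$ thanks to the $C^\alpha([t,t+1];C^{1+\alpha}(\overline{\Omega}))$-bound for $u$; Schauder theory for the non-stationary Stokes system as in \cite{MR1928881, MR2343611} upgrades $\chi u$ to $C^{2+\alpha,1+\alpha/2}$. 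Third, expanding $\div(nS(x,n,c)\grad c)$ using the newly gained $c \in C^{2+\alpha,1+\alpha/2}$ and $u \in C^{2+\alpha,1+\alpha/2}$, the first equation becomes a linear parabolic equation for $n$ with Hölder coefficients and a Hölder source, so parabolic Schauder theory yields the remaining $C^{2+\alpha,1+\alpha/2}$-bound for $\chi n$. Uniformity in $t$ is automatic because every constant involved (the baseline Hölder seminorms, $\|S\|_{C^2}$, $\|f\|_{C^1}$, $\|\phi\|_{W^{2,\infty}}$, the cut-off norm and the Schauder constants depending only on $\Omega$) is $t$-independent.

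The main obstacle I expect is not mathematical depth but bookkeeping: one must check that the particular variant of Schauder theory cited for the Stokes system is stated in a way compatible with the Dirichlet boundary condition, the planar geometry of $\Omega$, and a right-hand side only assumed to be Hölder in both variables, and that the order of the three bootstrap rounds indeed closes (i.e.\ that improving $c$ and $u$ first is enough to close the $n$-estimate). Since exactly this bookkeeping was already executed in \cite[Lemma 5.7]{MR3531759} in a setting essentially identical to ours, I would follow that reference for the detailed verification rather than reproduce the routine Schauder estimates here.
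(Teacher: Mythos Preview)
Your proposal is essentially the paper's approach: the paper's entire proof is the one-line remark ``This can be directly seen via a straightforward adaptation of \cite[Lemma 5.7]{MR3531759},'' and you spell out precisely that adaptation (temporal cut-off to kill initial data, then Schauder bootstrap via \cite{ladyvzenskaja1988linear,LiebermanRegularity} for $c,n$ and \cite{MR1928881,MR2343611} for $u$) before deferring to the same reference for the details. One small wording issue: in your first round you put $u\cdot\nabla c$ into the right-hand side and claim it is $C^{\alpha,\alpha/2}$, but \Cref{lemma:time_hoelder_intermediate} does not yet give Hölder regularity of $\nabla c$; the fix is either to treat $u\cdot\nabla$ as a Hölder first-order coefficient in the operator (which is how \cite{ladyvzenskaja1988linear} is typically applied) or to first pass through a gradient-Hölder step via \cite{LiebermanRegularity} on the divergence form $c_t=\div(\nabla c-cu)-nf(c)$---both routes are exactly what \cite[Lemma 5.7]{MR3531759} does, so following that reference closes the gap.
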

\begin{proof}
	This can be directly seen via a straightforward adaptation of \cite[Lemma 5.7]{MR3531759}.
\end{proof}

\subsection{Proof of \Cref{theorem:eventual_smoothness}}
Having now essentially established all our desired regularity properties for the generalized solutions in the previous lemma, we now focus on proving the long time stabilization properties for $n, c$ and $u$ in $C^2(\overline{\Omega})$ outlined in \Cref{theorem:eventual_smoothness} as the penultimate proof of this paper.

\begin{lemma}\label{lemma:long_time_behavior} The generalized solution $(n,c,u)$ has the long time stabilization property (\ref{eq:eventual_convergence}).
\end{lemma}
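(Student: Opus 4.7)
The plan is to combine the uniform eventual $C^{2+\alpha, 1+\frac{\alpha}{2}}$ bounds from \Cref{lemma:highest_hoelder_norms} with the weak stabilization information already collected for the approximate solutions, in order to upgrade $L^p$-type convergence of norms to the desired $C^2$ convergence.

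First I would transfer the $\eps$-uniform smallness from \Cref{lemma:c_stabilization} and \Cref{lemma:things_are_small} to the limit $(n,c,u)$. Using that $(n_{\eps_j}, c_{\eps_j}, u_{\eps_j}) \to (n,c,u)$ pointwise almost everywhere, Fatou's lemma applied to the bounded-below integrand $y \mapsto y\ln(y/\overline{n_0})$ shows that for every $\delta > 0$ there is $t_0 = t_0(\delta)$ with
\[
    \int_\Omega n(\cdot, t) \ln\!\left(\frac{n(\cdot, t)}{\overline{n_0}}\right) \leq \delta \;\;\;\; \text{ for all } t \geq t_0,
\]
while dominated convergence, based on the global $L^\infty$ bound for $c_\eps$ from \Cref{lemma:basic_props} and the eventual $L^\infty$ bound for $u_\eps$ from \Cref{corollary:u_linfty_bound}, converts the uniform smallness of $\|c_\eps(\cdot, t)\|_{L^2(\Omega)}$ and $\|u_\eps(\cdot, t)\|_{L^2(\Omega)}$ into the corresponding smallness of $c$ and $u$. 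An application of the Csisz\'ar--Kullback--Pinsker inequality to the probability density $n(\cdot, t)/(|\Omega|\overline{n_0})$, whose correct normalization is supplied by the mass preservation identity (\ref{wsol:mass_perservation}), then yields $\|n(\cdot, t) - \overline{n_0}\|_{L^1(\Omega)} \to 0$ as $t \to \infty$.

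Second I would upgrade this $L^1$/$L^2$ stabilization to the required $C^2$ convergence by a standard Arzel\`a--Ascoli argument. Given any sequence $t_k \to \infty$, the uniform parabolic Hölder bound of \Cref{lemma:highest_hoelder_norms} together with the compact embedding $C^{2+\alpha}(\overline{\Omega}) \hookrightarrow C^2(\overline{\Omega})$ permits the extraction of a subsequence along which $(n(\cdot, t_k), c(\cdot, t_k), u(\cdot, t_k))$ converges in $C^2(\overline{\Omega}) \times C^2(\overline{\Omega}) \times C^2(\overline{\Omega};\R^2)$ to some limit triple. By the first step this limit must equal $(\overline{n_0}, 0, 0)$, and since the identification is independent of the chosen subsequence, the full family converges as $t \to \infty$, which is exactly (\ref{eq:eventual_convergence}).

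I expect the main subtlety to lie in the first step, namely in ensuring that the smallness obtained at the level of the approximate solutions actually survives the passage $\eps_j \searrow 0$ even though only pointwise almost everywhere convergence is available a priori. The uniform $L^\infty$ controls for $c_\eps$ and (eventually) for $u_\eps$ handle this for $c$ and $u$ via dominated convergence, while for $n$ it is essential to invoke the mass conservation of the generalized solution in order to apply Csisz\'ar--Kullback--Pinsker against the correct reference constant $\overline{n_0}$. Once the limit triple is shown to converge to $(\overline{n_0}, 0, 0)$ in $L^1$/$L^2$, the eventual uniform parabolic Hölder regularity established in \Cref{section:transition_to_gen} makes the promotion to $C^2$ convergence entirely automatic.
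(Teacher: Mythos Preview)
Your proposal is correct and follows essentially the same strategy as the paper: obtain $L^1$/$L^2$ stabilization of $(n,c,u)$ toward $(\overline{n_0},0,0)$ from the $\eps$-uniform smallness of \Cref{lemma:things_are_small} and \Cref{lemma:c_stabilization} together with the Csisz\'ar--Kullback inequality, and then upgrade to $C^2$ convergence via the uniform $C^{2+\alpha}$ bounds of \Cref{lemma:highest_hoelder_norms} and Arzel\`a--Ascoli. The only cosmetic difference is that you pass the entropy smallness to the limit by Fatou and then apply Csisz\'ar--Kullback to $n$ itself, whereas the paper applies Csisz\'ar--Kullback at the $\eps$-level and passes to the limit afterwards via a time-dependent choice $\eps(t)$; both are equally valid.
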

\begin{proof}
	\Cref{lemma:highest_hoelder_norms} directly gives us $\alpha \in (0,1), t_0 > 0$ and $K_1 > 0$ such that
	\[
	\|n\|_{C^{2+\alpha, 1+\frac{\alpha}{2}}(\overline{\Omega}\times[t,t+1])} + \|c\|_{C^{2+\alpha, 1+\frac{\alpha}{2}}(\overline{\Omega}\times[t,t+1])} + \|u\|_{C^{2+\alpha, 1+\frac{\alpha}{2}}(\overline{\Omega}\times[t,t+1])} \leq K_1
	\]
	for all $t > t_0$.
	\\[0.5em]  
	Let us further fix $K_2 > 0$ such that
	\[
	\|\varphi - \overline{\varphi}\|_\L{1} \leq K_2 \sqrt{\int_\Omega \varphi} \cdot \sqrt{ \int_\Omega \varphi \ln\left(  \frac{ \; \varphi \; }{\overline{\varphi}} \right)} \;\;\;\; \text{ for all nonnegative } \varphi \in C^0(\overline{\Omega}) \text{ with } \overline{\varphi} \defs \frac{1}{|\Omega|} \int_\Omega \varphi
	\]
	according to a Czisz\'{a}r--Kullback or Pinsker-type inequality (cf.\ \cite{CsiszarKullbackInequality}). Let now $\delta > 0$. By \Cref{lemma:things_are_small} and the inequality above, there therefore exists $t_\delta > t_0$ such that
	\begin{equation} \label{eq:n_l1_uniform_convergence}
	\|n_\eps(\cdot, t) - \overline{n_0}\|_{L^1(\Omega)} \leq K_2 \sqrt{\int_\Omega n_0 } \cdot \sqrt{ \int_\Omega n_\eps(\cdot, t) \ln \left( \frac{\; n_\eps(\cdot, t) \;}{\overline{n_0}} \right)} < \frac{\delta}{2}
	\end{equation}
	for all $t > t_\delta$ and $\eps \in (0,1)$.
	Further for each $t > t_\delta > t_0$, there exists an $\eps(t) \in (0,1)$ such that
	\begin{equation*} 
	\|n(\cdot,t) - n_{\eps(t)}(\cdot, t)\|_\L{1} < \frac{\delta}{2}
	\end{equation*}
	because of, for example, the (almost everywhere) pointwise convergence of the approximate solutions to the generalized solutions combined with the dominated convergence theorem (using a constant majorant as established by \Cref{lemma:n_linfty_bound}). Combining the above two inequalities then results in
	\[
	\|n(\cdot, t) - \overline{n_0}\|_\L{1} \leq \|n(\cdot, t) - n_{\eps(t)}(\cdot, t)\|_\L{1} + \|n_{\eps(t)}(\cdot, t) - \overline{n_0}\|_\L{1} < \delta
	\]
	for all $t > t_\delta$ and therefore $n(\cdot, t) \rightarrow \overline{n_0}$  in $\L{1}$ as $t \rightarrow \infty$.
	\\[0.5em]
	As the start of a proof by contradiction, we assume now that $n(\cdot, t)$ does not converge to $\overline{n_0}$ in $C^2(\overline{\Omega})$ as $t \rightarrow \infty$. Then there must exist a constant $K_3 > 0$ and a sequence $(t_k)_{k\in\N}$ with $t_k \rightarrow \infty$ as $k \rightarrow \infty$ such that
	\begin{equation}\label{eq:subsequence_with_distance}
		\|n(\cdot, t_k) - \overline{n_0}\|_{C^2(\overline{\Omega})} > K_3 \;\;\;\; \text{ for all } k \in \N.
	\end{equation}
	As the family
	\[
		(n(\cdot, t_k))_{k \in \N}		
	\]
	is furthermore uniformly bounded in $C^{2+\alpha}(\overline{\Omega})$ by $K_1$, an application of the Arzelà--Ascoli theorem yields that the sequence $(t_k)_{k\in\N}$ has a subsequence, along which $n(\cdot, t_k)$ converges to some limit value in $C^2(\overline{\Omega})$. As we already know that $n(\cdot, t_k)$ converges to $\overline{n_0}$ in $L^1(\Omega)$ as $k\rightarrow \infty$ by prior arguments, the above $C^2(\overline{\Omega})$ limit must be $\overline{n_0}$ as well. This is a contraction to (\ref{eq:subsequence_with_distance}) and therefore we have proven that $n(\cdot, t) \rightarrow \overline{n_0}$ as $t \rightarrow \infty$ in $C^2(\overline{\Omega})$.
	\\[0.5em]
	As we have proven similar uniform convergence properties to (\ref{eq:n_l1_uniform_convergence}) for $c_\eps(\cdot, t)$ and $u_\eps(\cdot, t)$ in \Cref{lemma:c_stabilization} and \Cref{lemma:things_are_small}, the above argument can basically be reused verbatim to prove the remaining two convergence properties in (\ref{eq:eventual_convergence}). This completes the proof.
\end{proof}

\noindent 
This now allows us to tackle the last argument of this paper, namely the proof of \Cref{theorem:eventual_smoothness}.

\begin{proof}[Proof of \Cref{theorem:eventual_smoothness}]
	A combination of \Cref{lemma:highest_hoelder_norms} and \Cref{lemma:long_time_behavior} now grants us all desired properties for $(n,c,u)$ from some time $t_0 > 0$ onward as it is well-known that weak solutions of the kind characterized in \Cref{definition:weak_solution} and \Cref{lemma:n_weak_solution} with regularity properties as provided by \Cref{lemma:highest_hoelder_norms} are in fact classical already and an associated pressure function $P$ for the fluid equation can be constructed (cf.\ \cite{ladyvzenskaja1988linear}, \cite{MR1928881}).
\end{proof}

\section*{Acknowledgment} The author acknowledges support of the \emph{Deutsche Forschungsgemeinschaft} in the context of the project \emph{Fine structures in interpolation inequalities and application to parabolic problems}, project number 462888149.

\end{document}